\newtheorem{theorem}{Theorem}[section]
\newtheorem{lemma}[theorem]{Lemma}
\newtheorem{corollary}[theorem]{Corollary}
\newtheorem{proposition}[theorem]{Proposition}
\theoremstyle{definition}
\newtheorem{definition}[theorem]{Definition}
\newtheorem{remark}[theorem]{Remark}
\newtheorem{example}[theorem]{Example}
\newcommand{\nc}{\newcommand}
\newcommand{\delete}[1]{}
\def\bc{\begin{center}}
	\def\ec{\end{center}}
\nc{\tred}[1]{\textcolor{red}{#1}}
\nc{\tblue}[1]{\textcolor{blue}{#1}} \nc{\tgreen}[1]{\textcolor{green}{#1}} \nc{\tpurple}[1]{\textcolor{purple}{#1}} \nc{\btred}[1]{\textcolor{red}{\bf #1}} \nc{\btblue}[1]{\textcolor{blue}{\bf #1}} \nc{\btgreen}[1]{\textcolor{green}{\bf #1}} \nc{\btpurple}[1]{\textcolor{purple}{\bf #1}}
\newcommand{\efootnote}[1]{}
\nc{\mlabel}[1]{\label{#1}}  
\nc{\mcite}[1]{\cite{#1}}  
\nc{\mref}[1]{\ref{#1}}  
\nc{\meqref}[1]{\eqref{#1}}  
\nc{\mbibitem}[1]{\bibitem{#1}} 
	\nc{\mlabel}[1]{\label{#1}  
		{\hfill \hspace{1cm}{\bf{{\ }\hfill(#1)}}}}
	\nc{\mcite}[1]{\cite{#1}{{\bf{{\ }(#1)}}}}  
	\nc{\mref}[1]{\ref{#1}{{\bf{{\ }(#1)}}}}  
	\nc{\meqref}[1]{\eqref{#1}{{\bf{{\ }(#1)}}}}  
	\nc{\mbibitem}[1]{\bibitem[\bf #1]{#1}} 
\renewcommand\geq{\geqslant}
\renewcommand\leq{\leqslant}
\renewcommand\bar[1]{\overline{#1}}
\nc{\name}[1]{{\bf #1}}
\nc{\tforall}{\quad \text{ for all }}
\nc{\mre}{\text{Re}\,}
\nc{\mim}{\text{im}\,}
\nc{\nz}{\varepsilon}
\nc{\Id}{\mathrm{Id}}
\nc{\DO}{\text{DO}}
\nc{\IDO}{\text{IDO}}
\nc{\IEO}{\mathrm{IEO}}
\nc{\mnoindent}{\smallskip\noindent}
\nc{\lnvkv}{\triangleleft}
\nc{\bin}[2]{ (_{\stackrel{\scs{#1}}{\scs{#2}}})}  
\nc{\binc}[2]{ \left (\!\! \begin{array}{c} \scs{#1}\\
		\scs{#2} \end{array}\!\! \right )}  
\nc{\bincc}[2]{  \left ( {\scs{#1} \atop
		\vspace{-1cm}\scs{#2}} \right )}  
\nc{\bs}{\bar{S}} \nc{\cosum}{\sqsubset} \nc{\la}{\longrightarrow} \nc{\rar}{\rightarrow} \nc{\dar}{\downarrow} \nc{\dprod}{**} \nc{\dap}[1]{\downarrow \rlap{$\scriptstyle{#1}$}} \nc{\md}[1]{\bar{#1}} \nc{\uap}[1]{\uparrow \rlap{$\scriptstyle{#1}$}} \nc{\defeq}{\stackrel{\rm def}{=}} \nc{\disp}[1]{\displaystyle{#1}} \nc{\dotcup}{\ \displaystyle{\bigcup^\bullet}\ } \nc{\gzeta}{\bar{\zeta}} \nc{\hcm}{\ \hat{,}\ } \nc{\hts}{\hat{\otimes}} \nc{\barot}{{\otimes}} \nc{\free}[1]{\bar{#1}} \nc{\uni}[1]{\tilde{#1}} \nc{\hcirc}{\hat{\circ}} \nc{\leng}{\ell} \nc{\lleft}{[} \nc{\lright}{]} \nc{\lc}{\lfloor} \nc{\rc}{\rfloor}
\nc{\curlyl}{\left \{ \begin{array}{c} {} \\ {} \end{array}
	\right.  \!\!\!\!\!\!\!}
\nc{\curlyr}{ \!\!\!\!\!\!\!
	\left. \begin{array}{c} {} \\ {} \end{array}
	\right \} }
\nc{\longmid}{\left | \begin{array}{c} {} \\ {} \end{array}
	\right. \!\!\!\!\!\!\!}
\nc{\onetree}{\bullet} \nc{\ora}[1]{\stackrel{#1}{\rar}}
\nc{\ola}[1]{\stackrel{#1}{\la}}
\nc{\ot}{\otimes} \nc{\mot}{{{\boxtimes\,}}} \nc{\otm}{\overline{\boxtimes}} \nc{\sprod}{\bullet} \nc{\scs}[1]{\scriptstyle{#1}} \nc{\mrm}[1]{{\rm #1}} \nc{\msum}{\sum\limits}
\nc{\margin}[1]{\marginpar{\rm #1}}   
\nc{\dirlim}{\displaystyle{\lim_{\longrightarrow}}\,} \nc{\invlim}{\displaystyle{\lim_{\longleftarrow}}\,} \nc{\mvp}{\vspace{0.3cm}} \nc{\tk}{^{(k)}} \nc{\tp}{^\prime} \nc{\ttp}{^{\prime\prime}} \nc{\svp}{\vspace{2cm}} \nc{\vp}{\vspace{8cm}} \nc{\proofbegin}{\noindent{\bf Proof: }}
\nc{\proofend}{$\blacksquare$ \vspace{0.3cm}}
\nc{\modg}[1]{\!<\!\!{#1}\!\!>}
\nc{\intg}[1]{F_C(#1)} \nc{\lmodg}{\!<\!\!} \nc{\rmodg}{\!\!>\!} \nc{\cpi}{\widehat{\Pi}}
\nc{\sha}{{\mbox{\cyr X}}}  
\nc{\shap}{{\mbox{\cyrs X}}} 
\nc{\shpr}{\diamond}    
\nc{\shp}{\ast} \nc{\shplus}{\shpr^+}
\nc{\shprc}{\shpr_c}    
\nc{\msh}{\ast} \nc{\zprod}{m_0} \nc{\oprod}{m_1} \nc{\vep}{\varepsilon} \nc{\labs}{\mid\!} \nc{\rabs}{\!\mid}
\nc{\astarrow}{\overset{\raisebox{-3pt}{$\ast$}}{\rightarrow}}
\nc{\sqsym}{Stirling quasisymmetric function\xspace}
\nc{\sqsyms}{Stirling quasisymmetric functions\xspace}
\nc{\EEsym}{\mathbb{E}sym}
\nc{\Sym}{\mrm{Sym}}
\nc{\NSym}{\mrm{NSym}}
\nc{\QSym}{\mrm{QSym}}
\nc{\RQSym}{\mrm{RQSym}}
\nc{\RenQSym}{\mrm{WCQSym}}	
\nc{\DQSym}{\mrm{DQSym}}
\nc{\WDQSym}{\mrm{WDQSym}}
\nc{\DLQSym}{\mrm{DLQSym}}
\nc{\ZQSym}{\mrm{ZQSym}}
\nc{\Ensym}{\mrm{ENSym}}
\nc{\Wcsym}{\mrm{WCSym}}
\nc{\LWQSym}{\mrm{LWQSym}}
\nc{\LWCQSym}{\mrm{\mathrm{LWQSym}}}
\nc{\Wcqsym}{\mrm{QSym}_{\widetilde{\mathbb{N}}}}
\nc{\Syms}{symmetric functions\xspace}
\nc{\eqsym}{extended quasisymmetric function\xspace}
\nc{\eqsyms}{extended quasisymmetric functions\xspace}
\nc{\Eqsyms}{Extended Quasisymmetric functions\xspace}
\nc{\Esyms}{Extended symmetric functions\xspace}
\nc{\sgqsym}{quasisymmetric function with semigroup exponents\xspace}
\nc{\sgqsyms}{quasisymmetric functions with semigroup exponents\xspace}
\nc{\Sgqsyms}{Quasisymmetric functions with semigroup exponents\xspace}
\nc{\SGQSYM}{\mrm{SGQSYM}}
\nc{\emzv}{extended multiple zeta value}
\nc{\emzvs}{extended multiple zeta values}
\nc{\sgfps}{formal power series with semigroup exponent\xspace}
\nc{\NSymg}{\mathrm{NSym}_\gp}
\nc{\zqsym}{zeta-quasisymmetric }
\nc{\gslwqsym}{Stirling left weak quasisymmetric function\xspace}
\nc{\gslwqsyms}{Stirling left weak quasisymmetric functions\xspace}
\nc{\ulwb}{upper-left weak bicomposition\xspace}
\nc{\ulwbs}{upper-left weak bicompositions\xspace}
\nc{\parr}{\rm Par}
\nc{\wpar}{\rm WPar}
\nc{\wcomp}{\large{\VDash}}
\nc{\Ker}{\ker}
\nc{\dth}{d} \nc{\mmbox}[1]{\mbox{\ #1\ }} \nc{\fp}{\mrm{FP}} \nc{\rchar}{\mrm{char}} \nc{\Fil}{\mrm{Fil}} \nc{\Mor}{Mor\xspace} \nc{\gmzvs}{gMZV\xspace} \nc{\gmzv}{gMZV\xspace} \nc{\mzv}{MZV\xspace} \nc{\mzvs}{MZVs\xspace}
\nc{\MZV}{\mathrm{MZV}}
\nc{\Hom}{\mrm{Hom}} \nc{\id}{\mrm{id}} \nc{\im}{\mrm{im}} \nc{\incl}{\mrm{incl}}  \nc{\mchar}{\rm char}
\nc{\Alg}{\mathbf{Alg}} \nc{\Bax}{\mathbf{Bax}} \nc{\bff}{\mathbf f} \nc{\bfk}{{\bf k}} \nc{\bfone}{{\bf 1}} \nc{\bfx}{\mathbf x} \nc{\bfy}{\mathbf y}
\nc{\base}[1]{\bfone^{\otimes ({#1}+1)}} 
\nc{\Cat}{\mathbf{Cat}} \delete{}
\nc{\detail}{\marginpar{\bf More detail}
	\noindent{\bf Need more detail!}
	\svp}
\nc{\Int}{\mathbf{Int}} \nc{\Mon}{\mathbf{Mon}}
\nc{\rbtm}{{shuffle }} \nc{\rbto}{{Rota-Baxter }} \nc{\remarks}{\noindent{\bf Remarks: }} \nc{\Rings}{\mathbf{Rings}} \nc{\Sets}{\mathbf{Sets}}
\nc{\balpha}{\mathbf{\alpha}}
\nc{\BA}{{\mathbb A}} \nc{\CC}{{\mathbb C}} \nc{\DD}{{\mathbb D}} \nc{\EE}{{\mathbb E}} \nc{\FF}{{\mathbb F}} \nc{\GG}{{\mathbb G}} \nc{\HH}{{\mathbb H}} \nc{\LL}{{\mathbb L}} \nc{\NN}{{\mathbb N}} \nc{\KK}{{\mathbb K}} \nc{\PP}{{\mathbb P}} \nc{\QQ}{{\mathbb Q}} \nc{\RR}{{\mathbb R}} \nc{\TT}{{\mathbb T}} \nc{\VV}{{\mathbb V}} \nc{\ZZ}{{\mathbb Z}}
\nc{\cala}{{\mathcal A}} \nc{\calc}{{\mathcal C}} \nc{\cald}{{\mathcal D}} \nc{\cale}{{\mathcal E}} \nc{\calf}{{\mathcal F}} \nc{\calg}{{\mathcal G}} \nc{\calh}{{\mathcal H}} \nc{\cali}{{\mathcal I}} \nc{\call}{{\mathcal L}} \nc{\calm}{{\mathcal M}} \nc{\caln}{{\mathcal N}} \nc{\calo}{{\mathcal O}} \nc{\calp}{{\mathcal P}} \nc{\calr}{{\mathcal R}} \nc{\cals}{{\mathcal S}} \nc{\calt}{{\mathcal T}} \nc{\calw}{{\mathcal W}} \nc{\calk}{{\mathcal K}} \nc{\calx}{{\mathcal X}}
\nc{\calz}{{\mathcal Z}}
\nc{\fraka}{{\mathfrak a}} \nc{\frakA}{{\mathfrak A}} \nc{\frakb}{{\mathfrak b}} \nc{\frakB}{{\mathfrak B}}
\nc{\frakc}{{\mathfrak c}}  \nc{\frakD}{{\mathfrak D}}
\nc{\frakH}{{\mathfrak H}}
\nc{\frakh}{{\mathfrak h}} \nc{\frakM}{{\mathfrak M}}
\nc{\frakO}{{\mathfrak O}}
\nc{\frakE}{{\mathfrak E}}
\nc{\bfrakM}{\overline{\frakM}} \nc{\frakm}{{\mathfrak m}} \nc{\frakP}{{\mathfrak P}} \nc{\frakN}{{\mathfrak N}} \nc{\frakp}{{\mathfrak p}} \nc{\frakS}{{\mathfrak S}}
\nc{\frakk}{{\mathfrak k}}
\nc{\frakx}{{\mathfrak x}}
\nc{\frakl}{{\mathfrak l}} \nc{\ox}{\bar{\frakx}} \nc{\frakX}{{\mathfrak X}} \nc{\fraky}{{\mathfrak y}} \nc\dop{\delta}
\nc{\Reduce}{{\rm Red}}
\font\cyr=wncyr10 \font\cyrs=wncyr7
\nc{\redt}[1]{\textcolor{red}{#1}}
\nc{\li}[1]{\textcolor{red}{#1}}
\nc{\lir}[1]{\textcolor{red}{Li:#1}}
\nc{\ap}[1]{\textcolor{blue}{#1}}
\nc{\apr}[1]{\textcolor{blue}{AP:#1}}
\nc{\RBO}{\mathrm{IO}}
\nc{\mrep}{{\mathrm{Id}}}
\nc{\SD}{{\text{SD}}}
\nc{\Fix}{{\text{Fix}}}
\nc{\rb}{integral\xspace}
\nc{\Rb}{Integral\xspace}
\nc{\mrb}{Rota-Baxter\xspace}
\nc{\wvec}[2]{{\scriptsize{\Big [ \!\!\begin{array}{c} #1 \\ #2 \end{array} \!\! \Big ]}}}
\nc{\bwvec}[2]{\Big(\wvec{#1}{#2}\Big)}
\nc{\jwvec}[2]{{\scriptsize{\Big [ \!\!\begin{array}{cccccccccccccc} #1 \\ #2 \end{array} \!\! \Big ]}}}
\nc{\bjwvec}[2]{\Big(\jwvec{#1}{#2}\Big)}
\begin{document}
	
\title[Integral operators on lattices]{Integral operators on lattices}

\author{Aiping Gan}
\address{School of Mathematics and Statistics,
Jiangxi Normal University, Nanchang, Jiangxi 330022, P.~R. China}
\email{ganaiping78@163.com}

\author{Li Guo}
\address{Department of Mathematics and Computer Science, Rutgers University, Newark, NJ 07102, USA}
\email{liguo@rutgers.edu}

\author{Shoufeng Wang}
\address{Department of Mathematics, Yunnan Normal University, Yunnan, Kunming, 650500, P.~R. China}
\email{wsf1004@163.com}
	
\hyphenpenalty=8000
	
\date{\today}
	
\begin{abstract} 
As an abstraction and generalization of the integral operator in analysis, integral operators (known as Rota-Baxter operators of weight zero) on associative algebras and Lie algebras have played an important role in mathematics and physics. This paper initiates the study of integral operators on lattices and the resulting \mrb lattices (of weight zero). We show that properties of lattices can be characterized in terms of their integral operators. We also display a large number of integral operators on any given lattice and classify the isomorphism classes of integral operators on some common classes of lattices. We further investigate structures on semirings derived from differential and integral operators on lattices.
\end{abstract}
	
\subjclass[2010]{
	06B75,	
	06C05,	
	06D75,	
	06B20, 	
	17B38,  
	13N15,	
	16Y60	
}
	
\keywords{Lattice, integral operator, derivation, \mrb lattice, differential lattice, semiring, Novikov semiring, dendriform semiring}
	
\maketitle

\vspace{-.7cm}
	
\tableofcontents
	
\hyphenpenalty=8000 \setcounter{section}{0}
	
	
\allowdisplaybreaks

\section {Introduction}	
\mlabel{sec:intr}

This paper introduces the notion of \rb operators on lattices and studies their role in understanding lattices, their classification and their derived structures. 

As is well known, 
 the derivation, or differential operator, and integral operator are fundamental in analysis and its broad applications. As an abstraction of the derivation, the notion of a differential algebra was introduced in the 1930's by Ritt~\mcite{Ri}, to be a field $A$ carrying a linear operator $d$ satisfying an abstraction of the Leibniz rule for the derivation:
$$d(uv)=d(u) v+ud(v) \quad \text{ for all } u, v\in A.$$
Thus $d$ is still called a differential operator. 
The theory of differential algebra for fields and more generally for commutative algebras has since been developed into a mature area of mathematical research including differential Galois theory, differential algebraic geometry and differential algebraic groups~\mcite{CGKS,Ko,SP}. Furthermore, differential algebra has found profound applications in arithmetic geometry, logic and computational algebra. 
	
The notion of derivations on lattices was first introduced by Szasz \mcite{sz}. 
There, a derivation on a lattice $(L,\vee,\wedge)$ is a map $d:L\to L$ satisfying
\begin{equation}
	d(x\vee y)=d(x)\vee d(y), \quad d(x\wedge y)=(d(x)\wedge y)\vee (x\wedge d(y)) \tforall x, y\in L.
\mlabel{eq:00}
\end{equation}
More recently, a less restricted notion of derivations was studied with motivation from information science~\mcite{xin2,xin1}, without requiring the first condition. 
This study was continued in~\mcite{GG}, where the notion of a differential lattice was formally introduced and then studied from the viewpoint of universal algebra. 

Originated from a probability study of G.~Baxter~\mcite{Bax} and promoted by G.-C.~Rota in its early stage, a Rota-Baxter algebra is an associative algebra together with a linear operator satisfying a variation of the integration by parts formula for the integral operator. More precisely, a Rota-Baxter algebra with a preassigned scalar $\lambda$, called the weight, is an associative algebra $A$ with a linear endomorphism $P$ of $A$ satisfying the Rota-Baxter equation:
\begin{equation}
	P(u)P(v)=P(uP(v))+P(P(u)v)+\lambda P(uv) \tforall u, v\in A.
\mlabel{eq:000}
\end{equation}
The analytic model of a Rota-Baxter operator of weight zero is the integral operator 
\begin{equation} \mlabel{eq:int} I(f)(x):=\int_0^x f(t)\,dt, 
	\end{equation}
defined for functions $f$ continuous on $\RR$. Then the integration by parts formula gives 
\begin{equation} \mlabel{eq:ibp}
	\Big(\int_0^xf(t)dt\Big)\Big(\int_0^xg(s)ds\Big) 
	=\int_0^x f(t)\Big(\int_0^t g(s)ds\Big) dt + \int_0^xg(s)\Big(\int_0^s f(t)dt\Big) ds.
\end{equation}
This means that the operator $I$ is a Rota-Baxter operator of weight zero. 
Thus a Rota-Baxter operator of weight zero in general is also called an integral operator. 

While the early developments of Rota-Baxter algebras attracted the attentions of prominent mathematicians such as Rota, Atkinson and Cartier~\mcite{At,Ca,Ro} in the 1960s and 1970s, this century witnesses a remarkable renascence of Rota-Baxter algebras, thanks to their connections to several important areas in mathematics and mathematical physics such as the renormalization of quantum field theory, Yang-Baxter equations, multiple zeta values, combinatorial Hopf algebras and operads~\mcite{Ag,Bai,CK,GK,GLS,Ho,LST,RR,TCGS,YGT}. See~\mcite{Gus,Gub} for a short survey and a more detailed exposition. Furthermore, Rota-Baxter operators have been defined for a wide range of specific algebraic structures and for the general framework of algebraic operads~\mcite{Ag,BBGN,STS}. More recently Rota-Baxter operators have been defined for Hom-Lie algebras, groups, groupoids and cocommutative Hopf algebras~\mcite{Go,GLS,MY}.  

Thus it is natural to define Rota-Baxter operators, in particular integral operators, on lattices and explore their role in the study of lattices. 
This is the purpose of this article. We find it fascinating that properties of lattices that at the outset have nothing to do with differential or \rb operators turn out to be characterized by these operators.  
We also study isomorphic \mrb lattices\footnote{To avoid confusion with the existing notion of integral lattices~\mcite{SJ}, we will use the term \mrb lattices instead. Note however that integral operators only correspond to \mrb operators of weight zero. \mrb lattices from Rota-Baxter operators with nonzero weights will be studied separately.} and classify isomorphism classes of \mrb lattices with certain underlying lattices. We further investigate derived structures from 
differential lattices or \mrb  lattices, motivated by their associative algebra or Lie algebra predecessors which had their origins in hydrodynamics and quantum theory.

By the First Fundamental Theorem of Calculus, the differential operator and the integral operator in analysis are one sided inverses of each other. An analog of this relation also holds for the differential operators and \rb operators~ on associative algebras~\mcite{GK3}. Thus it is also interesting to study the relationship between the corresponding operators on lattices. We found that in place of a formal analogy, the two operators are closely related in another way: an operator $d$
on a lattice is both  an \rb operator and a differential operator in the sense of \mcite{xin1}  if and only if $d$ satisfies Eq.~\meqref{eq:00}, that is, $d$ is a derivation in the sense of Szasz \mcite{sz} (see Proposition \mref{p:00}).

The importance of the differential operator and \rb operator on associative algebras relies on their close relationship with other useful algebraic structures such as Novikov algebras and dendriform algebras. We show that such relations can be extended to the operators on lattices,
see Proposition \mref{pro:887} and Proposition \mref{pro:888}.

These properties show that derivations and integral operators are useful tools to study lattices, as well as to give rise to new structures of independent interests. 

Overall, the paper is organized as follows. In Section~\mref{sec:rb},
 the notions of an \rb operator on a lattice and \mrb lattice are introduced. Each lattice carries several classes of \rb operators, giving a large selection of \mrb lattices. 
We find that an operator on a lattice is both a derivation and an \rb operator~ if and only if it 
satisfies Eq.~\meqref{eq:00} (Proposition \mref{p:00}).
We also characterize some special lattices, such as distributive lattices, weak modular lattices or chains, via \rb operators (Theorem~\mref{pro:000}, Theorem~\mref{the:0002} and Theorem \mref{th:300}).

Section~\mref{sec:inl} studies isomorphism classes of \mrb lattices.
We classify isomorphic \mrb lattices on two types of underlying lattices: the finite chains and the diamond type lattices $M_{n}$ (Proposition \mref{c:3001} and Theorem \mref{te:111}). Their enumerations are related to the Fibonacci numbers. 

As noted above, differential and \rb operators~ on associative algebras gives rise to interesting algebraic structures such as Novikov algebras and dendriform algebras. 
We show in Section~\mref{ss:extra}, that similar structures can be derived from
differential and \rb operators~ on lattices.
 Let $L$ be a distributive lattice, and $d$ be an isotone derivation on $L$.
 Define
 $x\triangleleft y:=d(x)\wedge y $ for all $ x, y\in L$.
  Then $(L,\vee, \triangleleft)$ is a left Novikov semiring
 (Proposition \mref{pro:887}).
For a \mrb distributive lattice $(L,\vee, \wedge, P)$, define
$ x\prec_P y:=x\wedge P(y)$ and $ x\succ_P y:= P(x)\wedge y $ for all $ x, y \in L$.
Then $(L, \vee, \prec_P,\succ_P)$ is a dendriform semiring
 (Proposition \mref{pro:888}).
\smallskip

\noindent
{\bf Notations. }
Throughout this paper, unless otherwise specified,  we let $(L, \vee, \wedge)$ denote a lattice, and 
let $(L, \vee, \wedge, 0, 1)$ denote a bounded lattice with bottom element $0$ and top element $1$. For elements $a, b$ in a poset $(A, \leq)$, we write $a<b$  if $a\leq b$ with $a\neq b$.

\section {Integral operators  on lattices}
\mlabel{sec:rb}
In this section, we introduce \rb operators on lattices, and
characterize some special lattices, such as distributive lattices and chains, in terms of \rb operators.

\begin{definition}
An operator $P: L\rightarrow L$ on a lattice $L$ is called an \name{\rb operator} if $P$ satisfies the following equations: 
\begin{enumerate}
\item $P(x\vee y)=P(x)\vee P(y)$, and
\item  $P(x)\wedge P(y)=P(P(x)\wedge y)\vee P(x\wedge P(y))$ for all $x,y \in L$.
\end{enumerate}
A lattice equipped with an \rb operator is called an \name{\mrb lattice} (of weight zero). 
\mlabel{d:31}
\end{definition}
See the introduction, especially Eqs.~\meqref{eq:int} and \meqref{eq:ibp}, for the motivation for the term integral operator. 

As pointed out by one of the referees, the term integral lattice has been used to mean a discrete additive subgroup of $\RR^n$ such that the inner product of lattice vectors are all integral~\mcite{SJ}. Thus the term \mrb lattice is used here to avoid confusion. However an integral operator is only a \mrb operator of weight zero. So the \mrb lattice considered here is also for weight zero. Other \mrb lattices will be studied separately.

We let $\RBO(L)$ denote the set of all {\rb operators} on a lattice $L$. To conform to the notion of linear operators, we call a map from $L$ to itself an operator even though there is no linear structure on $L$. 

We give some preliminary examples of \rb operators. Many more examples can be found in Propositions~\mref{pp:22}--\mref{pp:0001}. 
\begin{example}
\begin{enumerate}
\item It is clear that the identity map $ \mrep_{L}$ on the lattice $L$ is an \rb operator.
\item Let $L$ be a  lattice. For any  $a\in L$, define an operator $\textbf{C}_{(a)}: L\rightarrow L$ by:
$\textbf{C}_{(a)}(x):=a$ for any $x\in L$. It is easy to see that $\textbf{C}_{(a)}$ is in $\RBO(L)$. 
$\textbf{C}_{(a)}$ is called  the \name{constant \rb operator} with value $a$.
 When $L$ is a  lattice with  bottom element $0$, we write  $\textbf{0}_{L}$ for  $\textbf{C}_{(0)}$. 
\mlabel{it:221}
\item Let $(L, \vee, \wedge, 0, 1)$ be a  lattice. Define an operator $\tau: L\rightarrow L$  by:
$$
    \tau(x):=
    \begin{cases}
      0,  & \textrm{if}~ x=0; \\
      1,  & \textrm{otherwise}.
    \end{cases}
    $$
It is routine to verify that $\tau$ is in $\RBO(L)$.
\mlabel{it:222}
\end{enumerate}
\mlabel{exa:300}
\end{example}

\begin{proposition}
Let  $L$ be a  lattice and $P\in\RBO(L)$. Then the following statements hold:
\begin{enumerate}
\item $P$ is isotone:  $P(x)\leq P(y)$ for any $x, y\in L$ with $x\leq y$..
\mlabel{it:3001}
\item $P(x)=P(x\wedge P(x))$ for any $x\in L$.
\mlabel{it:3002}
\item $P$ is idempotent: $P^{2}=P$.
\mlabel{it:3003}
\item $P(x)=P(x\vee P(x))$ for any $x\in L$.
\mlabel{it:3004}
\item $P(P(x)\wedge y)\leq P(x)\wedge P(y)$ for all $x, y\in L$.
\mlabel{it:3005}
\end{enumerate}
\mlabel{p:300}
\end{proposition}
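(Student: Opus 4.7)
My plan is to prove the five items essentially in the listed order, using axioms (1) and (2) of Definition~\ref{d:31} and relying on the earlier parts to establish the later ones. The only delicate item is (iii); the others are short.

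For (i), I would use axiom (1) directly: if $x\leq y$ then $x\vee y=y$, so $P(y)=P(x\vee y)=P(x)\vee P(y)$, which says $P(x)\leq P(y)$. For (ii), I would specialize axiom (2) to $y=x$. Since $\wedge$ is commutative, the two summands on the right coincide, and the left side collapses to $P(x)$, giving $P(x)=P(x\wedge P(x))$. For (v), I would simply read off axiom (2): since $P(x)\wedge P(y)=P(P(x)\wedge y)\vee P(x\wedge P(y))$, the first joinand is at most the left-hand side.

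The main obstacle is (iii), idempotency, and I would establish it by two opposite inequalities. For $P(x)\leq P^{2}(x)$, combine (ii) with (i): from $x\wedge P(x)\leq P(x)$ and isotonicity, $P(x)=P(x\wedge P(x))\leq P(P(x))=P^{2}(x)$. For the reverse inequality $P^{2}(x)\leq P(x)$, I would specialize axiom (2) to $y=P(x)$, yielding
\[
P(x)\wedge P^{2}(x)=P(P(x)\wedge P(x))\vee P(x\wedge P^{2}(x))=P^{2}(x)\vee P(x\wedge P^{2}(x)).
\]
The left-hand side is at most $P^{2}(x)$ (as a meet) while the right-hand side is at least $P^{2}(x)$ (as a join containing it), forcing both sides to equal $P^{2}(x)$. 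Hence $P(x)\wedge P^{2}(x)=P^{2}(x)$, i.e.\ $P^{2}(x)\leq P(x)$. Combined with the earlier inequality this proves $P^{2}=P$.

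Finally, for (iv), I would apply axiom (1) and then (iii):
\[
P(x\vee P(x))=P(x)\vee P(P(x))=P(x)\vee P^{2}(x)=P(x)\vee P(x)=P(x).
\]
Thus all five statements are handled with a single nontrivial step, namely the two-sided squeeze in (iii); everything else is either a direct application of one of the two defining axioms or a one-line consequence of an item proved earlier in the list.
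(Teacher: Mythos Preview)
Your proposal is correct and follows essentially the same approach as the paper's proof: each of (i), (ii), (iv), (v) is obtained exactly as you describe, and for (iii) the paper likewise uses (ii) together with isotonicity to get $P(x)\leq P^{2}(x)$, then specializes axiom~(2) with $y=P(x)$ and performs the same squeeze $P^{2}(x)\leq P(x)\wedge P^{2}(x)\leq P(x)$ to obtain the reverse inequality.
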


\begin{proof}
Assume that $L$ is a lattice and $P\in \RBO(L)$. Let $x, y\in L$.

\mnoindent
\mref{it:3001} If $x\leq y$,
then $P(y)=P(x\vee y)=P(x)\vee P(y)$, and so $P(x)\leq P(y)$. Hence $P$ is isotone.

\mnoindent
\mref{it:3002} By  Definition \mref{d:31}, we have $$P(x)=P(x)\wedge P(x)=P(P(x)\wedge x)\vee P(x\wedge P(x))=P(x\wedge P(x)),$$
that is, \mref{it:3002} holds.

 \mnoindent
\mref{it:3003} 
Since $P$ is isotone, we have  
 $P(x)=P(P(x)\wedge x)\leq P(P(x))=P^{2}(x)$ by \mref{it:3002}.
 Also, since $P(x)\wedge P^{2}(x)=P(P(x)\wedge P(x))\vee P(x\wedge P^{2}(x))=P^{2}(x)\vee P(x\wedge P^{2}(x))$,
we get $P^{2}(x)\leq P(x)\wedge P^{2}(x)\leq P(x)$, and so $P^{2}(x)=P(x)$. Hence $P^{2}=P$.

 \mnoindent
\mref{it:3004} Since $P^{2}(x)=P(x)$ by 
\mref{it:3003}, we have $P(x\vee P(x))=P(x)\vee P^{2}(x)=P(x)$.

\mnoindent
\mref{it:3005} follows immediately from Definition \mref{d:31}.
\end{proof}

Let $L$ be a  lattice and $P$  be an operator on $L$.
Denote the set of all fix points of $P$ by
$\Fix_{P}(L)$:
$$\Fix_{P}(L):=\{x\in L~|~ P(x)=x\}\subseteq L.$$
The following simple fact about idempotent operators can be found in~\cite[Lemma 2.4]{GG}.
\begin{lemma} 
Let $L$ be a lattice and $P$ be an operator on $L$. Then $P$ is idempotent if and only if \emph{$\Fix_{P}(L)$} equals to the image $P(L)$ of $P$. 
	\mlabel{l:200}
\end{lemma}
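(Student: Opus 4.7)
The proof is a purely set-theoretic fact about self-maps of a set: the lattice structure on $L$ plays no role, only that $P$ is a function $L \to L$. I would prove the two directions separately.

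For the forward direction, assume $P^{2}=P$. To show $\Fix_{P}(L)\subseteq P(L)$, I note that any $x\in\Fix_{P}(L)$ satisfies $x=P(x)$, so $x$ is trivially in the image of $P$. For the reverse inclusion $P(L)\subseteq\Fix_{P}(L)$, I take an arbitrary $y\in P(L)$, write $y=P(x)$ for some $x\in L$, and compute $P(y)=P(P(x))=P^{2}(x)=P(x)=y$, which shows $y\in\Fix_{P}(L)$.

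For the backward direction, assume $\Fix_{P}(L)=P(L)$. For any $x\in L$, the element $P(x)$ lies in $P(L)$, hence in $\Fix_{P}(L)$ by hypothesis, so $P(P(x))=P(x)$. Since $x$ is arbitrary, $P^{2}=P$.

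There is really no obstacle here; the only thing to watch is applying each hypothesis in the correct direction (idempotence to get $P(P(x))=P(x)$ in the forward direction, and equality of sets to deduce $P(x)\in\Fix_{P}(L)$ in the backward direction). The argument is short enough that I would present it as a single paragraph in the final write-up.
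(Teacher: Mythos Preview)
Your proof is correct and complete; this is indeed a purely set-theoretic fact about self-maps, and your argument handles both directions cleanly. Note that the paper does not actually supply a proof of this lemma but instead cites \cite[Lemma~2.4]{GG}, so there is no in-paper argument to compare against.
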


We also have the following easy consequences. 

\begin{corollary}
Let $L$ be a  lattice and  $P\in \RBO(L)$. Then the following statements hold.
\begin{enumerate}
\item \emph{$\Fix_{P}(L)$}$=P(L)$ for the image $P(L)$ of $P$.
\mlabel{it:31}
\item \emph{$\Fix_{P}(L)$}  is a sublattice of $L$.
\mlabel{it:32}
\end{enumerate}
\mlabel{c:300}
\end{corollary}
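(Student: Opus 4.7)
The plan is to derive both parts directly from results already available in the excerpt, with essentially no new calculation required.

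For part~\mref{it:31}, the argument is a one-line combination of two earlier results: Proposition~\mref{p:300}\mref{it:3003} establishes that any $P\in\RBO(L)$ is idempotent ($P^{2}=P$), and Lemma~\mref{l:200} then yields $\Fix_{P}(L)=P(L)$.

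For part~\mref{it:32}, I would verify closure of $\Fix_{P}(L)$ under $\vee$ and $\wedge$ directly. Closure under $\vee$ is immediate from Definition~\mref{d:31}(i): if $P(x)=x$ and $P(y)=y$, then $P(x\vee y)=P(x)\vee P(y)=x\vee y$. Closure under $\wedge$ is obtained by substituting $P(x)=x$ and $P(y)=y$ into the Rota-Baxter identity of Definition~\mref{d:31}(ii); the two summands on the right then both equal $P(x\wedge y)$, and $\vee$-idempotence collapses the identity to $x\wedge y=P(x\wedge y)$.

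No step here presents a genuine obstacle; the content of the claim is already packaged inside the idempotency of $P$ together with the two defining axioms of an \rb operator. The only observation worth flagging is that the Rota-Baxter identity degenerates neatly when both arguments are fixed points, so no structural hypothesis on $L$ (distributivity, modularity, chain, etc.) is needed for either part.
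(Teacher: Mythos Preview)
Your proposal is correct and essentially matches the paper's proof. For part~\mref{it:31} the argument is identical; for part~\mref{it:32} the paper phrases the computation in terms of elements of $P(L)$ (writing them as $P(x),P(y)$ and showing their meet and join lie in $P(L)=\Fix_P(L)$), whereas you work directly with fixed points, but the underlying use of Definition~\mref{d:31}(i)--(ii) is the same.
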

\begin{proof}
\mnoindent
\mref{it:31}
 follows  by Proposition \mref{p:300} and Lemma \mref{l:200}.

\mnoindent
\mref{it:32} For any $P(x), P(y)\in \Fix_{P}(L)$, we have
$P(x)\vee P(y)=P(x\vee y)\in P(L)=\Fix_{P}(L)$, and
 $$P(x)\wedge P(y)=P(P(x)\wedge y)\vee P(x\wedge P(y))=P((P(x)\wedge y)\vee (x\wedge P(y)))\in P(L)=\Fix_{P}(L).$$
This shows that $\Fix_{P}(L)$ is closed under the operations $\vee$ and $\wedge$. Thus $\Fix_{P}(L)$ is a sublattice of $L$.
\end{proof}
Applying to integral operators, we see that an integral operators on a lattice is almost never injective or surjective. 
\begin{proposition}
Let $L$ be a  lattice and  $P\in$\emph{$\RBO(L)$}. Then the following statements are equivalent.
\begin{enumerate}
\item \emph{$P=\mrep_{L}$}.
\mlabel{it:1001}
\item $P$ is  injective.
\mlabel{it:1002}
\item $P$ is  surjective.
\mlabel{it:1003}
\end{enumerate}
\mlabel{cor:300}
\end{proposition}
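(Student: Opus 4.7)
The plan is to exploit the idempotency of $P$ established in Proposition~\ref{p:300}\ref{it:3003} together with the identification $\Fix_{P}(L)=P(L)$ from Corollary~\ref{c:300}\ref{it:31}. The implications \ref{it:1001}$\Rightarrow$\ref{it:1002} and \ref{it:1001}$\Rightarrow$\ref{it:1003} are immediate since the identity map is bijective, so only the two converses require argument, and each reduces to a one-line deduction from the idempotent law $P^{2}=P$.

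For \ref{it:1002}$\Rightarrow$\ref{it:1001}, I would argue as follows. Fix $x\in L$. By Proposition~\ref{p:300}\ref{it:3003}, $P(P(x))=P(x)$, and injectivity of $P$ then forces $P(x)=x$. Since $x$ was arbitrary, $P=\mrep_{L}$.

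For \ref{it:1003}$\Rightarrow$\ref{it:1001}, suppose $P$ is surjective, so $P(L)=L$. By Corollary~\ref{c:300}\ref{it:31}, $\Fix_{P}(L)=P(L)=L$, which says exactly that $P(x)=x$ for every $x\in L$; hence $P=\mrep_{L}$.

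There is no real obstacle here: both directions are formal consequences of idempotency, which was the substantive work done in Proposition~\ref{p:300}. The only thing worth emphasizing in the write-up is that this rather strong rigidity statement (an \rb operator that is injective or surjective must be the identity) is precisely what makes \rb operators far from automorphisms, in contrast to what one might naively expect from the analytic analogy with the integral operator.
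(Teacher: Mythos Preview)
Your proof is correct and matches the paper's argument essentially line for line: both directions use idempotency $P^{2}=P$ from Proposition~\ref{p:300}\ref{it:3003}, with injectivity cancelling $P$ from $P(P(x))=P(x)$ and surjectivity invoking $\Fix_{P}(L)=P(L)$ from Corollary~\ref{c:300}\ref{it:31}. The only difference is your added closing remark, which is commentary rather than proof.
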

\begin{proof}

It is clear that
\mref{it:1001}$\Rightarrow$\mref{it:1002} and \mref{it:1001}$\Rightarrow$\mref{it:1003}.

\mnoindent
\mref{it:1002}$\Rightarrow$\mref{it:1001} Assume that $P$ is an injective \rb operator. For any $x\in L$, we have
$P(P(x))=P(x)$ by Proposition \mref{p:300} \mref{it:3003},  and so $P(x)=x$. Thus $P=\mrep_{L}$.

\mnoindent
\mref{it:1003}$\Rightarrow$\mref{it:1001} Assume that $P$ is a surjective \rb operator. Then
$\Fix_{P}(L)=P(L)=L$ by Corollary \mref{c:300}, and so  $P=\mrep_{L}$.
\end{proof}

The classical derivation and integration in analysis are related by the First Fundamental Theorem of Calculus (FFTC), which implies that the integration is injective. Since Proposition~\mref{cor:300} shows that an \rb operator~ on a lattice is injective only for the identity map, an analogy of the FFT for lattices is not meaningful. However, as we establish below (Proposition~\mref{p:00} and Theorem~\mref{pro:000}), there are other close relations between \rb operators ~and differential operators on a lattice. 

We first give some notions and properties of differential lattices. 
An operator $d$ on a lattice $L$  is called a \name{derivation} or a \name{differential operator}~\mcite{xin1,GG}
if it satisfies the equation
$$
d(x\wedge y)=(d(x)\wedge y)\vee (x\wedge d(y)) \quad \tforall ~ x, y\in L.
$$
Denote the set of all derivations on $L$ by $\DO(L)$. We recall the following results for later applications.
\begin{lemma}
	\name{\mcite{xin1}} Let $L$ be a lattice, $d\in $\emph{$\DO(L)$} and
	$x, y\in L$. Then the following statements hold.
	\begin{enumerate}
		\item $d(x)\leq x$. In particular, $d(0)=0$ if $L$ has bottom element $0$.
		\mlabel{it:2011}
		\item $d(x)\wedge d(y)\leq x\wedge d(y)\leq d(x\wedge y)$.
		\mlabel{it:2012}
		\item If $x\leq d(u)$ for some $u\in L$, then $d(x)=x$.
		\mlabel{it:2013}
		\item If $L$ has top element $1$ and $d(1)=1$, then \emph{$d=\mrep_{L}$}.
		\mlabel{it:2014}
		\item $d$ is  idempotent.
		\mlabel{it:2015}
	\end{enumerate}
	\mlabel{pro:201}
\end{lemma}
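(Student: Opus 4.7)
My plan is to derive each of the five statements as a short consequence of the defining identity
\[
d(x\wedge y)=(d(x)\wedge y)\vee(x\wedge d(y)),
\]
proved in the order given, since the later parts will depend on the earlier ones.

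First, for \mref{it:2011}, I would specialize the defining identity to $y=x$. Absorption collapses the right-hand side to $d(x)\wedge x$, giving $d(x)=d(x)\wedge x$, and hence $d(x)\leq x$. Applying this to $x=0$ (when a bottom element exists) yields $d(0)\leq 0$, hence $d(0)=0$. For \mref{it:2012}, the inequality $d(x)\wedge d(y)\leq x\wedge d(y)$ is immediate from \mref{it:2011} applied to $x$, while $x\wedge d(y)\leq d(x\wedge y)$ just records the fact that $x\wedge d(y)$ is one of the two joinands on the right of the defining identity.

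With \mref{it:2011} and \mref{it:2012} in hand, \mref{it:2013} is a short argument: if $x\leq d(u)$, then $x\leq d(u)\leq u$ by \mref{it:2011}, so $x\wedge u=x$ and $x\wedge d(u)=x$; then \mref{it:2012} gives $x=x\wedge d(u)\leq d(x\wedge u)=d(x)$, and combining with $d(x)\leq x$ from \mref{it:2011} yields $d(x)=x$. Statement \mref{it:2014} is then an immediate corollary: under the hypothesis $d(1)=1$, every $x\in L$ satisfies $x\leq 1=d(1)$, so \mref{it:2013} forces $d(x)=x$ and hence $d=\mrep_{L}$. Finally, for \mref{it:2015}, taking $u=x$ and using the trivial inclusion $d(x)\leq d(x)$ in \mref{it:2013} (applied to the element $d(x)$ in place of $x$) gives $d(d(x))=d(x)$.

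There is no substantial obstacle here; the only mild subtlety is bookkeeping the order of dependencies, since \mref{it:2013} relies on both \mref{it:2011} and \mref{it:2012}, while \mref{it:2014} and \mref{it:2015} are quick corollaries of \mref{it:2013}. Thus the plan is to prove the parts strictly in the listed order, each in a line or two of manipulation with the defining identity and the lattice absorption laws.
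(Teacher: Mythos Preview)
Your proof is correct. Note that the paper does not actually supply its own proof of this lemma; it is quoted from \mcite{xin1} and stated without argument. Your derivation of each item from the defining identity, in the listed order and with the indicated dependencies, is exactly the standard verification and matches what one finds in the cited source.
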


Denote the set of all isotone derivations on $L$ by  $\IDO(L)$.
Also recalled~\mcite{sz} that a map $d:L\to L$ is called a \name{meet-translation} if $d(x\wedge y)=x\wedge d(y)$ for all $x, y\in L$.

\begin{lemma}
	Let $L$ be a lattice. If $d\in$
	\emph{$ \IDO(L)$}, then 
	$d(x\wedge y)=d(x)\wedge d(y)$ for all $x, y\in L$.
	\mlabel{l:100}
\end{lemma}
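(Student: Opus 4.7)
The plan is to show the two inequalities $d(x\wedge y)\leq d(x)\wedge d(y)$ and $d(x)\wedge d(y)\leq d(x\wedge y)$ separately; one direction is immediate from isotonicity, the other uses the pointwise bound $d(z)\leq z$ from Lemma~\mref{pro:201}\mref{it:2011} together with the defining identity for a derivation.

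For the first inequality, I would simply observe that $x\wedge y\leq x$ and $x\wedge y\leq y$, so applying the isotone map $d$ yields $d(x\wedge y)\leq d(x)$ and $d(x\wedge y)\leq d(y)$, hence $d(x\wedge y)\leq d(x)\wedge d(y)$. This step is where the hypothesis $d\in \IDO(L)$ (rather than just $d\in \DO(L)$) is used.

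For the second inequality, by Lemma~\mref{pro:201}\mref{it:2011} we have $d(x)\leq x$ and $d(y)\leq y$. Thus
\[
d(x)\wedge d(y)\leq x\wedge d(y)\quad\text{and}\quad d(x)\wedge d(y)\leq d(x)\wedge y,
\]
so $d(x)\wedge d(y)$ is a common lower bound of $d(x)\wedge y$ and $x\wedge d(y)$, and in particular
\[
d(x)\wedge d(y)\leq (d(x)\wedge y)\vee(x\wedge d(y))=d(x\wedge y),
\]
where the last equality is the derivation axiom. Combining the two inequalities gives the claim.

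I do not anticipate any real obstacle here; the argument is essentially a two-line application of the derivation identity plus isotonicity, and both ingredients are already recorded in the excerpt. The only thing to be careful about is noting explicitly where isotonicity enters, since the analogous identity fails for a general (non-isotone) derivation.
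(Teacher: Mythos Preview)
Your proof is correct and follows essentially the same route as the paper: both establish $d(x\wedge y)\leq d(x)\wedge d(y)$ from isotonicity and then the reverse inequality. The only cosmetic difference is that the paper cites Lemma~\mref{pro:201}\mref{it:2012} directly for $d(x)\wedge d(y)\leq d(x\wedge y)$, whereas you unpack that inequality yourself from Lemma~\mref{pro:201}\mref{it:2011} and the derivation axiom.
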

\begin{proof}
	Assume that  $L$ is a lattice,  $d\in \IDO(L)$ and $x, y\in L$.  Since $x\wedge y\leq x$ and  $x\wedge y\leq y$, we have $d(x\wedge y)\leq d(x)\wedge d(y)$, and so $d(x\wedge y)=d(x)\wedge d(y)$
	by Lemma \mref{pro:201} \mref{it:2012}.
\end{proof}

\begin{remark}
	The converse of Lemma~\mref{l:100}  does not hold.
	For example, let $L$ be a lattice with a bottom element $0$. For a given $u\in L\backslash \{0\}$, the constant \rb operator ~$\textbf{C}_{(u)}$ 
		satisfies the condition $\textbf{C}_{(u)}(x\wedge y)=\textbf{C}_{(u)}(x)\wedge \textbf{C}_{(u)}(y)$
	for any $x, y\in L$. But $\textbf{C}_{(u)}$ is not a derivation since
	$\textbf{C}_{(u)}(0)=u\neq 0$.
	\mlabel{re:000}
\end{remark}

Proposition \mref{t:000} and Proposition \mref{the:000} improve ~\cite[Theorem~3.10]{xin2} and \cite[Theorem~3.18]{xin1} by not requiring that $d$ is a derivation in the hypotheses. 

\begin{proposition}
	Let $L$ be a lattice and $d$ be an operator on $L$. Then $d\in$ \emph{$\IDO(L)$} if and only if $d$ is a meet-translation:
	$d(x\wedge y)=x\wedge d(y)$ for all $x, y\in L$.
	\mlabel{t:000}
\end{proposition}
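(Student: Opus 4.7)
The plan is to prove both implications directly, being careful that the hypotheses do not presuppose $d$ is a derivation; this is precisely where this proposition improves the earlier result in \mcite{xin2}.

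For the forward direction, assume $d \in \IDO(L)$ and fix $x,y \in L$. The inequality $x \wedge d(y) \leq d(x \wedge y)$ is exactly Lemma~\mref{pro:201}\mref{it:2012}. For the reverse inequality, I will use two facts from Lemma~\mref{pro:201}: first, $d(x \wedge y) \leq x \wedge y \leq x$ by \mref{it:2011}, and second, since $x \wedge y \leq y$, isotonicity gives $d(x \wedge y) \leq d(y)$. Combining these yields $d(x \wedge y) \leq x \wedge d(y)$, which completes this direction.

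For the converse, assume $d(x \wedge y) = x \wedge d(y)$ for all $x,y \in L$. The key observation is that meet is commutative, so applying the hypothesis with $x$ and $y$ interchanged gives $d(x \wedge y) = d(y \wedge x) = y \wedge d(x) = d(x) \wedge y$. Thus both $x \wedge d(y)$ and $d(x) \wedge y$ equal $d(x \wedge y)$, and taking their join recovers the derivation identity
\[
(d(x) \wedge y) \vee (x \wedge d(y)) = d(x \wedge y) \vee d(x \wedge y) = d(x \wedge y).
\]
Hence $d \in \DO(L)$. For isotonicity, suppose $x \leq y$, so $x \wedge y = x$. Then $d(x) = d(x \wedge y) = x \wedge d(y) \leq d(y)$, showing $d \in \IDO(L)$.

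There is no serious technical obstacle here; the only subtlety is that the forward direction requires assembling three ingredients from Lemma~\mref{pro:201} (the pointwise bound, the comparison inequality, and isotonicity) rather than invoking a single identity, and the converse requires the short but crucial trick of symmetrizing the meet-translation hypothesis to recover both conditions defining an isotone derivation from the single assumed identity.
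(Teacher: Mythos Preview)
Your proof is correct and follows essentially the same approach as the paper's. The only minor difference is in the forward direction: the paper routes through Lemma~\mref{l:100} to first obtain $d(x\wedge y)=d(x)\wedge d(y)$ and then sandwiches via Lemma~\mref{pro:201}\mref{it:2012}, whereas you bypass Lemma~\mref{l:100} and establish $d(x\wedge y)\leq x\wedge d(y)$ directly from Lemma~\mref{pro:201}\mref{it:2011} and isotonicity; the converse direction is identical.
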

\begin{proof}
	If $d\in \IDO(L)$, then by Lemma \mref{l:100}, $d(x\wedge y)=d(x)\wedge d(y)$ for all $x, y\in L$, and so $d(x\wedge y)=x\wedge d(y)$ by Lemma \mref{pro:201} \mref{it:2012}.
	
	Convesely, if 	$d(x\wedge y)=x\wedge d(y)$ for all $x, y\in L$, then $d(x\wedge y)=y\wedge d(x)$, and so $d(x\wedge y)=(d(x)\wedge y)\vee (x\wedge d(y))$. Hence $d\in \DO(L)$. Also, $d$ is isotone. In fact, if $x\leq y$, then $d(x)=d(x\wedge y)=x\wedge d(y)\leq d(y)$.
	Thus we get  $d\in \IDO(L)$.
\end{proof}

A natural class of derivations, called \textbf{inner derivations}, are defined by taking, for any given $u\in L$, the map
$$d_u(x):=x\wedge u, \quad \tforall x\in L.$$

\begin{proposition}
	Let $L$ be a lattice with top element $1$
	and $d$ be an operator on $L$. Then
	the following statements are equivalent:
	\begin{enumerate}
		\item $d$ is an isotone derivation.
		\mlabel{it:10001}
		\item $d$ is a meet-translation: $d(x\wedge y)=x\wedge d(y)$ for all $x, y\in L$.
		\mlabel{it:10002}
		\item $d(x)= x\wedge d(1)$ for any $x\in L$.
		\mlabel{it:10003}
		\item $d$ is an inner derivation.
		\mlabel{it:10004}
	\end{enumerate}
	Furthermore, these statements are implied by the linearity of a derivation:
	\begin{enumerate}\addtocounter{enumi}{4}
		\item $d$ is a derivation with the {\bf linearity}
		$d(x\vee y)=d(x)\vee d(y)$ for all $x, y\in L$. 
		\mlabel{it:10005}
	\end{enumerate}
	If $L$ is distributive, then all the five statements are equivalent. 
	\mlabel{the:000}
\end{proposition}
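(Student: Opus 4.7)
The plan is to run the equivalence of (i)--(iv) as a short cycle, then treat (v) separately and indicate where distributivity enters. The equivalence (i)$\Leftrightarrow$(ii) is already Proposition \mref{t:000}, so nothing new is required there. For (ii)$\Rightarrow$(iii), I would simply specialize the meet-translation identity at $y=1$, giving $d(x)=d(x\wedge 1)=x\wedge d(1)$. Then (iii)$\Rightarrow$(iv) is immediate by setting $u:=d(1)$, which exhibits $d$ as the inner derivation $d_u$. To close the cycle, (iv)$\Rightarrow$(ii) follows from associativity of $\wedge$: if $d(x)=x\wedge u$, then
\[ d(x\wedge y)=(x\wedge y)\wedge u=x\wedge(y\wedge u)=x\wedge d(y). \]

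For (v)$\Rightarrow$(i), the key observation is that linearity alone forces isotonicity. Indeed, if $x\leq y$, then $d(y)=d(x\vee y)=d(x)\vee d(y)$, so $d(x)\leq d(y)$, and hence $d\in \IDO(L)$. Combining with the hypothesis that $d$ is a derivation yields (i). Note that this direction does not use distributivity.

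Finally, for the distributive case, I need (i)--(iv)$\Rightarrow$(v). The cleanest route is via (iii): from $d(x)=x\wedge d(1)$ and distributivity,
\[ d(x\vee y)=(x\vee y)\wedge d(1)=(x\wedge d(1))\vee(y\wedge d(1))=d(x)\vee d(y), \]
which is (v). The only subtle point in the whole argument is not technical but conceptual: without distributivity, an inner derivation need not preserve joins, so (v) is genuinely stronger than (i)--(iv) in general. This asymmetry between join-preservation and meet-translation is precisely what the ``furthermore'' clause is designed to capture, and it is the one place where one must resist the temptation to distribute a meet over a join in an arbitrary lattice.
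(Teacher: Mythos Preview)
Your proof is correct and follows essentially the same cycle as the paper for (i)--(iv), with one minor structural variation: the paper closes the cycle via (iv)$\Rightarrow$(i) by citing \cite[Example~3.8]{xin1}, whereas you close it via (iv)$\Rightarrow$(ii) directly from associativity of $\wedge$. Your route is more self-contained.

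The one place where your argument genuinely differs from the paper is the treatment of (v). The paper invokes an external result of Ferrari~\mcite{fer} to conclude that a linear derivation is a meet-translation (hence satisfies (ii)). You instead give the two-line observation that linearity forces isotonicity, yielding (i) directly. Your argument is more elementary and avoids an external citation; the paper's route, on the other hand, records the slightly sharper fact (meet-translation rather than mere isotonicity) obtained in~\mcite{fer}, though for the purposes of this proposition either is sufficient. The distributive step (iii)$\Rightarrow$(v) is identical in both proofs.
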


\begin{proof}
	\mnoindent
	\mref{it:10001}$\Leftrightarrow$\mref{it:10002} follows by Proposition \mref{t:000}.
	
	\mnoindent
	\mref{it:10002}$\Rightarrow$\mref{it:10003}
	Assume that \mref{it:10002} holds. Then
	$d(x)=d(x\wedge 1)=x\wedge d(1)$ for any $x\in L$, giving \mref{it:10003}.
	
	\mnoindent
	\mref{it:10003}$\Rightarrow$\mref{it:10004} is clear.
	
	\mnoindent
	\mref{it:10004}$\Rightarrow$\mref{it:10001} follows from \cite[Example 3.8]{xin1}.
	
	By~\mcite{fer}, a derivation $d$ with the linearity implies that $d$ is a meet-translation and hence $d$ satisfies all the conditions~\mref{it:10001} -- \mref{it:10004}. 
	
	For the last statement, assume that $L$ is a distributive lattice with top element $1$. Let $d\in \IDO(L)$, and $x, y\in L$. Then by the equivalence of \mref{it:10003} and \mref{it:10004}, we obtain 
	\[ d(x\vee y)=(x\vee y)\wedge d(1)=(x\wedge d(1))\vee (y\wedge d(1))=d(x)\vee d(y). \]
	Hence condition~\mref{it:10004} implies condition~\mref{it:10005} and thus all the five conditions are equivalent. 
\end{proof}

\begin{proposition}
Let $d$ be an operator on a lattice $L$.
Then $d$ is both a differential operator and an \rb operator if and only if $d$ satisfies Eq.~\meqref{eq:00}, that is, $d$ is a derivation in the sense of
Szasz \mcite{sz}.
\mlabel{p:00}
\end{proposition}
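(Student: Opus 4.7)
The plan is to prove the two implications separately. For the forward direction, suppose that $d$ is simultaneously an \rb operator and a differential operator. Then clause~(i) of Definition~\mref{d:31} gives $d(x\vee y)=d(x)\vee d(y)$, while the Leibniz-type identity defining a derivation gives $d(x\wedge y)=(d(x)\wedge y)\vee(x\wedge d(y))$. Together these are exactly the two identities of Eq.~\meqref{eq:00}, so this direction requires nothing more.

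For the converse, assume that $d$ satisfies Eq.~\meqref{eq:00}. The second identity places $d$ in $\DO(L)$, and the first identity is precisely clause~(i) of Definition~\mref{d:31}. What remains is to verify the Rota-Baxter equation
$$d(x)\wedge d(y)=d\bigl(d(x)\wedge y\bigr)\vee d\bigl(x\wedge d(y)\bigr)\tforall x,y\in L.$$
First I would upgrade $d$ to an isotone derivation by observing that if $x\leq y$ then $d(y)=d(x\vee y)=d(x)\vee d(y)$ forces $d(x)\leq d(y)$. Lemma~\mref{l:100} then yields the meet-preservation $d(x\wedge y)=d(x)\wedge d(y)$. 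I would also record two standard consequences of $d$ being a derivation: the idempotence $d^{2}=d$ from Lemma~\mref{pro:201}\,\mref{it:2015}, and the inflation bound $d(z)\leq z$ from Lemma~\mref{pro:201}\,\mref{it:2011}.

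The concluding computation is short. Applying the Leibniz rule to the meets $d(x)\wedge y$ and $x\wedge d(y)$, and then using $d^{2}=d$ together with $d(x)\leq x$ and $d(y)\leq y$ to absorb the $d(x)\wedge d(y)$ summand into the larger $d(x)\wedge y$ and $x\wedge d(y)$ respectively, I expect both expressions to collapse to $d(d(x)\wedge y)=d(x)\wedge y$ and $d(x\wedge d(y))=x\wedge d(y)$. Joining them gives $(d(x)\wedge y)\vee(x\wedge d(y))=d(x\wedge y)$, which in turn equals $d(x)\wedge d(y)$ by the meet-preservation established above, and this is precisely the Rota-Baxter identity. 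The only conceptually non-routine step is the passage from join-linearity of the derivation to meet-preservation via Lemma~\mref{l:100}; once that bridge is built, the Rota-Baxter identity is simply the Leibniz rule applied to $x\wedge y$ in disguise, and no hard calculation remains.
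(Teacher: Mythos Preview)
Your proposal is correct and follows essentially the same path as the paper's proof: both directions hinge on observing that Eq.~\meqref{eq:00} forces $d\in\IDO(L)$, then invoking Lemma~\mref{l:100} for meet-preservation and Lemma~\mref{pro:201} for idempotence and the bound $d(z)\leq z$. The only cosmetic difference is that the paper cites Proposition~\mref{t:000} to write $d(x\wedge y)=d(x)\wedge y=x\wedge d(y)$ and then reads off $d^{2}(x\wedge y)=d(d(x)\wedge y)=d(x\wedge d(y))$ directly, whereas you expand $d(d(x)\wedge y)$ and $d(x\wedge d(y))$ via the Leibniz rule and absorb the cross term using $d(y)\leq y$ and $d(x)\leq x$; the two computations are equivalent and neither is materially shorter.
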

\begin{proof}
If $d$ is both a differential operator and an \rb operator, then clearly
$d$ satisfies  Eq.~\meqref{eq:00}.

Conversely, assume that $d$ satisfies Eq.~\meqref{eq:00}.
Then  $d\in \IDO(L)$. By Proposition \mref{t:000}, we get $d(x\wedge y)=x\wedge d(y)=d(x)\wedge y$ for all $x, y\in L$. It follows from Lemma \mref{l:100} and Lemma \mref{pro:201} that
$$d(x)\wedge d(y)=d(x\wedge  y)=d^{2}(x\wedge y)\vee d^{2}(x\wedge y)=d(d(x)\wedge y)\vee d(x\wedge d(y)),$$ and so
$d\in \DO(L)\cap\RBO(L) $.
\end{proof}

Proposition \mref{p:00} tells us that the intersection $\DO(L)\cap \RBO(L)$ of derivations and integral operators on $L$ is contained in $\IDO(L)$, while the next Theorem \mref{pro:000} says that
$\DO(L)\cap \RBO(L)\neq \IDO(L)$ if  $L$ is not  distributive.
This result and its corollaries show that basic properties of a lattice, such as the distributivity, can be characterized by the differential and \rb operators~ on the lattice. 

\begin{theorem}
 	Let $L$ be a  lattice. Then $L$ is  distributive if and only if every inner derivation is an \rb operator.
 	\mlabel{pro:000}
 \end{theorem}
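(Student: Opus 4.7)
The plan is to observe that the second axiom of an integral operator holds automatically for any inner derivation $d_u(x)=x\wedge u$, so that the entire theorem reduces to the first axiom, which transparently matches the distributive law.

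First I would verify that condition (ii) of Definition~\ref{d:31} is a free consequence of associativity, commutativity, and idempotency of $\wedge$, with no hypothesis on $L$. Indeed, for any $x,y\in L$,
\[
d_u(x)\wedge d_u(y)=(x\wedge u)\wedge(y\wedge u)=x\wedge y\wedge u,
\]
and likewise $d_u(d_u(x)\wedge y)=(x\wedge u\wedge y)\wedge u=x\wedge y\wedge u$, and $d_u(x\wedge d_u(y))=x\wedge (y\wedge u)\wedge u=x\wedge y\wedge u$. Hence both sides of the Rota-Baxter identity equal $x\wedge y\wedge u$, so (ii) always holds for $d_u$. Consequently, $d_u$ is an integral operator if and only if it satisfies (i), namely
\[
(x\vee y)\wedge u=(x\wedge u)\vee(y\wedge u)\tforall x,y\in L,
\]
which is exactly the distributive identity for the element $u$.

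From here both implications are immediate. If $L$ is distributive, then the displayed identity holds for every $u$, so every inner derivation $d_u$ satisfies (i), and combined with the automatic (ii) this makes $d_u$ an integral operator; recall from Proposition~\ref{the:000} that $d_u$ is in any case a derivation. Conversely, if every inner derivation is an integral operator, then condition (i) applied to $d_u$ says that the displayed identity holds for all $x,y,u\in L$, which is the distributive law; hence $L$ is distributive.

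There is no real obstacle: the only content is the three-line check that (ii) is automatic for inner derivations. I would, however, be careful to cite Proposition~\ref{the:000} so that the inner derivation $d_u$ is genuinely a derivation (hence the statement ``every inner derivation is an integral operator'' makes sense), and to emphasize that the equivalence is driven purely by axiom (i).
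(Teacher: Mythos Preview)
Your proof is correct and follows essentially the same logic as the paper: both reduce the question to whether $d_u$ satisfies condition~(i), which is transparently the distributive law. The only difference is in how condition~(ii) is dispatched. You verify directly in three lines that $d_u(x)\wedge d_u(y)=d_u(d_u(x)\wedge y)=d_u(x\wedge d_u(y))=x\wedge y\wedge u$, making~(ii) automatic; the paper instead checks~(i) and then invokes Proposition~\ref{p:00} (any derivation satisfying the join-homomorphism condition is an integral operator), which in turn rests on Proposition~\ref{t:000} and Lemmas~\ref{l:100} and~\ref{pro:201}. Your route is more self-contained and makes it explicit that the entire content of the theorem lies in axiom~(i); the paper's route exhibits the result as a specialization of the general ``Szasz derivation $=$ differential $\cap$ integral'' principle. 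The backward direction is identical in substance (you phrase it directly, the paper contrapositively).

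One small remark: your appeal to Proposition~\ref{the:000} to justify that $d_u$ is a derivation only applies when $L$ has a top element. In the general case the fact that $d_u$ is a derivation is a one-line check (or see \cite[Example~3.8]{xin1}), so this is harmless, but you should not cite Proposition~\ref{the:000} for it.
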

 \begin{proof}
 Assume that $L$ is a distributive lattice. Let $d$ be an inner derivation and $d=d_{u}$,  where $u\in L$. Then for any $x, y\in L$, we have
 $d(x\vee y)=(x\vee y)\wedge u=(x\wedge u)\vee (y\wedge u)=d(x)\vee d(y)$, and so
 $d\in \RBO(L)$ by Proposition \mref{p:00}.	
 	
 Conversely,	assume that $L$ is not distributive. Then there exist $u, v, w\in L$ such that
 	$(u\vee v)\wedge w\neq (u\wedge w)\vee (v\wedge w)$.
 	Consider the inner derivation $d_{w}$, that is,
 	$d_{w}(x)=x\wedge w$ for any $x\in L$.
 	 Since $d_{w}(u\vee v)=(u\vee v)\wedge w\neq (u\wedge w)\vee (v\wedge w)=d_{w}(u)\vee d_{w}(v)$, we have
 	  $d_{w}\not\in \RBO(L)$.
 \end{proof}
 
Proposition \mref{the:000} and Theorem \mref{pro:000} directly give
\begin{corollary}
Let $L$ be a  lattice with top element $1$. Then $L$ is distributive if and only if \emph{$ \IDO(L)\subseteq \RBO(L)$}.
\mlabel{th:000}
\end{corollary}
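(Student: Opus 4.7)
The plan is to combine the two preceding results without any further computation. The key bridge is the equivalence (\mref{it:10001}) $\Leftrightarrow$ (\mref{it:10004}) in Proposition \mref{the:000}: when $L$ has a top element $1$, the isotone derivations on $L$ coincide precisely with the inner derivations $d_u(x) = x \wedge u$ (indeed, given $d \in \IDO(L)$ we may take $u = d(1)$, and conversely every $d_u$ is an inner, hence isotone, derivation).

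For the ``only if'' direction, assume $L$ is distributive, and take any $d \in \IDO(L)$. By the above equivalence, $d = d_u$ for some $u \in L$. Theorem \mref{pro:000} then guarantees $d_u \in \RBO(L)$, yielding $\IDO(L) \subseteq \RBO(L)$.

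For the ``if'' direction, assume $\IDO(L) \subseteq \RBO(L)$. Every inner derivation $d_u$ is isotone (by the same equivalence in Proposition \mref{the:000}, or directly from \cite[Example~3.8]{xin1}), so $d_u \in \IDO(L) \subseteq \RBO(L)$ for every $u \in L$. The converse half of Theorem \mref{pro:000} then forces $L$ to be distributive.

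There is essentially no obstacle; the only point deserving attention is to verify that the equivalence (\mref{it:10001})$\Leftrightarrow$(\mref{it:10004}) of Proposition \mref{the:000} requires only the existence of a top element and not distributivity, so that it can be invoked freely in both directions of the argument.
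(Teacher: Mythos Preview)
Your proposal is correct and follows exactly the approach the paper intends: the paper's proof is the single line ``Proposition~\mref{the:000} and Theorem~\mref{pro:000} directly give'', and you have simply unpacked that sentence. Your remark that the equivalence \mref{it:10001}$\Leftrightarrow$\mref{it:10004} in Proposition~\mref{the:000} needs only a top element (and not distributivity) is the one point worth noting, and it is indeed the case.
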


In what follows, we present several classes of \rb operators ~on a lattice $L$. They are given by step-type operators.

\begin{proposition}
Let $L$ be a  lattice with top element $1$ and $a, b\in L$ with $b\leq a$.
 Define  an operator  $b^{(a)}: L\rightarrow L$  by
$$
    b^{(a)}(x):=
    \begin{cases}
      b,  & \textrm{if}~ x\leq a; \\
      1,  & \textrm{otherwise}.
    \end{cases}
    $$
Then $b^{(a)}$ is in \emph{$\RBO(L)$}.
In particular, if $(L, \vee, \wedge, 0, 1)$ is a bounded lattice, then
$0^{(a)}\in$\emph{$\RBO(L)$} and $0^{(0)}=\tau$ for the map $\tau$ defined in Example~\mref{exa:300} ~\mref{it:222}.
\mlabel{pp:22}
\end{proposition}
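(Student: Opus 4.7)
The plan is direct case analysis based on the four combinations of whether $x\leq a$ or not and whether $y\leq a$ or not. The crucial hypothesis is $b\leq a$, which guarantees that $b\wedge z\leq b\leq a$ for every $z\in L$, so expressions of the form $b^{(a)}(b\wedge z)$ always evaluate to $b$. This is what makes the Rota--Baxter identity collapse uniformly in each case.

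First I would verify condition~(i) of Definition~\ref{d:31}. If both $x,y\leq a$, then $x\vee y\leq a$, so both sides equal $b$. If at least one of $x,y$ fails to lie below $a$, then $x\vee y\not\leq a$ as well (since $x,y\leq x\vee y$), so the left side equals $1$, and the right side equals $b\vee 1=1$ or $1\vee 1=1$. In all subcases the two sides agree.

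Next I would verify condition~(ii) by splitting into the same four cases. In the case $x,y\leq a$ both factors $b^{(a)}(x),b^{(a)}(y)$ equal $b$, the left side is $b$, and both arguments $b^{(a)}(x)\wedge y = b\wedge y$ and $x\wedge b^{(a)}(y)=x\wedge b$ lie below $a$ (using $b\leq a$), so the right side is $b\vee b=b$. In the mixed cases where exactly one of $x,y$ lies below $a$, the left side is $b\wedge 1=b$; one of the two arguments inside on the right is $1\wedge y= y \leq a$ or $x\wedge 1=x\leq a$ (whichever of $x,y$ lies below $a$), and the other is of the form $b\wedge z$ which again sits below $a$, so both summands on the right evaluate to $b$. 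In the case neither $x$ nor $y$ lies below $a$, both factors on the left equal $1$ and the left side is $1$; on the right, $b^{(a)}(x)\wedge y = 1\wedge y = y\not\leq a$ and symmetrically $x\wedge b^{(a)}(y)=x\not\leq a$, so both summands are $1$ and the right side is $1$. This exhausts the cases.

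The final assertion about the bounded case is immediate: taking $b=0$ gives an admissible pair $(0,a)$ since $0\leq a$, so $0^{(a)}\in \RBO(L)$; and when moreover $a=0$, the condition $x\leq 0$ is equivalent to $x=0$, so $0^{(0)}$ coincides with the operator $\tau$ of Example~\ref{exa:300}\,\ref{it:222}. The only mild obstacle is bookkeeping the four cases without conflating them, and keeping track of where $b\leq a$ is used; there is no deeper structural difficulty.
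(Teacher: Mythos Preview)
Your proof is correct and follows essentially the same approach as the paper: a direct case analysis on whether $x\leq a$ and $y\leq a$. The only minor difference is that the paper first records that $b^{(a)}$ is isotone and idempotent and then uses these facts to handle the case ``at least one of $x,y$ is not $\leq a$'' in one stroke, whereas you compute all four subcases explicitly; both arguments are equally valid and amount to the same verification.
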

\begin{proof}
Let $L$ be a  lattice with top element $1$ and $a, b\in L$ with $b\leq a$.
It is obvious that $b^{(a)}$ is isotone. Also, since $b^{(a)}(L)=\{b, 1\}=\Fix_{b^{(a)}}(L)$,
we have $(b^{(a)})^{2}=b^{(a)}$ by Lemma \mref{l:200}. To prove $b^{(a)}\in \RBO(L)$,
let $x, y\in L$.

If $x\nleqslant a$ or  $y\nleqslant a$, without loss of generality, take $x\nleqslant a$. Then $ b^{(a)}(x)=1$ and $x\vee y\nleq a$. So $b^{(a)}(x\vee y)=1=  b^{(a)}(x)\vee  b^{(a)}(y)$.
 Since $b^{(a)}$ is isotone and $(b^{(a)})^{2}=b^{(a)}$, we have
 $b^{(a)}(x\wedge b^{(a)}(y))\leq b^{(a)}( b^{(a)}(y))=b^{(a)}(y)$, which together with $ b^{(a)}(x)=1$, implies that
  $$ b^{(a)}(x)\wedge b^{(a)}(y)= b^{(a)}(y)=b^{(a)}(y)\vee b^{(a)}(x\wedge b^{(a)}(y))=
   b^{(a)} (b^{(a)}(x)\wedge y)\vee b^{(a)}(x\wedge b^{(a)}(y)).$$

If $x\leq a$ and  $y\leq a$,  then $ b^{(a)}(x)=b^{(a)}(y)=b$ and $x\vee y\leq a$. So  $ b^{(a)}(x\vee y)=b=  b^{(a)}(x)\vee  b^{(a)}(y)$. Also, since $b^{(a)}(x)\wedge y\leq b^{(a)}(x)=b\leq a$ and
 $x\wedge b^{(a)}(y) \leq b^{(a)}(y)=b\leq a$, we have $ b^{(a)} (b^{(a)}(x)\wedge y)= b^{(a)}(x\wedge b^{(a)}(y))=b$. Thus
 $ b^{(a)}(x)\wedge b^{(a)}(y)=b=
   b^{(a)} (b^{(a)}(x)\wedge y)\vee b^{(a)}(x\wedge b^{(a)}(y))$.

Therefore we obtain that  $b^{(a)}$ is in $\RBO(L)$. 
\end{proof}

\begin{proposition}
	Let $L$ be a  lattice with top element $1$ and let $a\in L$.
	Define  an operator  $\tau^{(a)}: L\rightarrow L$  by
	$$
	\tau^{(a)}(x):=
	\begin{cases}
	x,  & \textrm{if}~ x\leq a; \\
	1,  & \textrm{otherwise}.
	\end{cases}
	$$
	Then $\tau^{(a)}$ is in \emph{$\RBO(L)$}.
	\mlabel{pp:20002}
\end{proposition}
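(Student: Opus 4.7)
The plan is to verify the two defining conditions of Definition~\mref{d:31} by a case analysis on whether $x \leq a$ and $y \leq a$. Before the case work, I would record two preparatory observations that are immediate from the definition of $\tau^{(a)}$. First, $\tau^{(a)}$ is isotone: if $x \leq y$ and $y \leq a$, then $x \leq a$ as well, so $\tau^{(a)}(x) = x \leq y = \tau^{(a)}(y)$; if $y \not\leq a$, then $\tau^{(a)}(y) = 1 \geq \tau^{(a)}(x)$. Second, $\tau^{(a)}$ is idempotent, because its image $\{x \in L : x \leq a\} \cup \{1\}$ equals its set of fixed points, so Lemma~\mref{l:200} applies.

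For condition~(i) of Definition~\mref{d:31}, I split into two cases. If both $x \leq a$ and $y \leq a$, then $x \vee y \leq a$, so both sides equal $x \vee y$. Otherwise, without loss of generality $x \not\leq a$, and then $x \vee y \geq x$ also fails to lie below $a$, so $\tau^{(a)}(x \vee y) = 1 = \tau^{(a)}(x) \vee \tau^{(a)}(y)$.

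For condition~(ii), I use the same split. When $x, y \leq a$, each of $\tau^{(a)}(x) \wedge y$ and $x \wedge \tau^{(a)}(y)$ reduces to $x \wedge y$, which is $\leq a$; hence both sides of the Rota-Baxter identity equal $x \wedge y$. In the remaining case, assume without loss of generality that $x \not\leq a$, so $\tau^{(a)}(x) = 1$ and the left side becomes $\tau^{(a)}(y)$. On the right, the first summand is $\tau^{(a)}(\tau^{(a)}(x) \wedge y) = \tau^{(a)}(y)$, while the second summand satisfies $\tau^{(a)}(x \wedge \tau^{(a)}(y)) \leq \tau^{(a)}(\tau^{(a)}(y)) = \tau^{(a)}(y)$ by isotonicity and idempotence, so it is absorbed in the join. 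Hence the right side also equals $\tau^{(a)}(y)$.

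The argument is essentially routine, and the only mild subtlety is the asymmetry in the second case: once $\tau^{(a)}(x) = 1$, no further subdivision on $y$ is necessary because the identity $1 \wedge y = y$ together with the isotonicity-plus-idempotence bound on the other summand handles $y \leq a$ and $y \not\leq a$ uniformly. This mirrors the structure of the proof of Proposition~\mref{pp:22}, so I would present it in a parallel style for ease of comparison.
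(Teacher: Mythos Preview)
Your proof is correct and follows essentially the same approach as the paper's: both establish isotonicity and idempotence (via Lemma~\mref{l:200}) as preparatory facts, then perform the same two-case split on whether $x,y\leq a$, handling the asymmetric case $x\not\leq a$ by using $\tau^{(a)}(x)=1$ for the first summand and the isotone-plus-idempotent bound $\tau^{(a)}(x\wedge \tau^{(a)}(y))\leq \tau^{(a)}(y)$ for the second. The structure and level of detail match the paper's proof closely.
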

\begin{proof}
	Let $L$ be a  lattice with top element $1$ and $a\in L$.
	It is obvious that $\tau^{(a)}$ is isotone. Also, since $\tau^{(a)}(L)=\{x\in L~|~x\leq a\} \cup \{ 1\}=\Fix_{\tau^{(a)}}(L)$,
	we have $(\tau^{(a)})^{2}=\tau^{(a)}$ by Lemma \mref{l:200}. To prove that $\tau^{(a)}$ is in $\RBO(L)$,
	let $x, y\in L$.
	
	If $x\nleqslant a$ or  $y\nleqslant a$, say  $x\nleqslant a$, then $ \tau^{(a)}(x)=1$ and $x\vee y\nleq a$. So  $\tau^{(a)}(x\vee y)=1=  \tau^{(a)}(x)\vee  \tau^{(a)}(y)$.
	Since $\tau^{(a)}$ is isotone and $(\tau^{(a)})^{2}=\tau^{(a)}$, we have
	$\tau^{(a)}(x\wedge \tau^{(a)}(y))\leq \tau^{(a)}( \tau^{(a)}(y))=\tau^{(a)}(y)$, which, together with $ \tau^{(a)}(x)=1$, implies that
	$$ \tau^{(a)}(x)\wedge \tau^{(a)}(y)= \tau^{(a)}(y)=\tau^{(a)}(y)\vee \tau^{(a)}(x\wedge \tau^{(a)}(y))=
	\tau^{(a)} (\tau^{(a)}(x)\wedge y)\vee \tau^{(a)}(x\wedge \tau^{(a)}(y)).$$
	
	If $x\leq a$ and  $y\leq a$,  then $ \tau^{(a)}(x)=x, \tau^{(a)}(y)=y$ and $x\vee y\leq a$. So  
	$$ \tau^{(a)}(x\vee y)=x\vee y=  \tau^{(a)}(x)\vee  \tau^{(a)}(y).$$ 
Also, since $\tau^{(a)}(x)\wedge y=x\wedge \tau^{(a)}(y)=x\wedge y\leq a$, 
	we have
		$$ \tau^{(a)}(x)\wedge \tau^{(a)}(y)=x\wedge y=
	\tau^{(a)} (\tau^{(a)}(x)\wedge y)\vee \tau^{(a)}(x\wedge \tau^{(a)}(y)).$$
In summary, we conclude that  $\tau^{(a)}$ is in $\RBO(L)$. 
\end{proof}

Proposition~\mref{pp:20002} readily gives
\begin{corollary}
Let $(L, \vee, \wedge, 0, 1)$ be a bounded lattice and $a$ be an atom of $L$.
Define  an operator  $P^{(a)}: L\rightarrow L$  by
$$
    P^{(a)}(x):=
    \begin{cases}
     0,  & \textrm{if}~ x=0; \\
      a,  & \textrm{if}~ x= a; \\
      1,  & \textrm{otherwise}.
    \end{cases}
    $$
Then $P^{(a)}$ is in \emph{$\RBO(L)$}.
\mlabel{pp:202}
\end{corollary}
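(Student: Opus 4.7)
The plan is to observe that when $a$ is an atom of the bounded lattice $L$, the operator $P^{(a)}$ coincides with the operator $\tau^{(a)}$ of Proposition~\mref{pp:20002}, so the conclusion follows immediately.

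More precisely, the atomicity of $a$ means that $0 < a$ and no element of $L$ lies strictly between $0$ and $a$; consequently, the condition $x \leq a$ holds if and only if $x = 0$ or $x = a$. Plugging this into the definition of $\tau^{(a)}$ from Proposition~\mref{pp:20002}, we get $\tau^{(a)}(0) = 0$, $\tau^{(a)}(a) = a$, and $\tau^{(a)}(x) = 1$ for any $x \in L$ with $x \neq 0$ and $x \neq a$. This matches the case-by-case definition of $P^{(a)}$ exactly, so $P^{(a)} = \tau^{(a)}$.

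Since $\tau^{(a)} \in \RBO(L)$ by Proposition~\mref{pp:20002}, we conclude that $P^{(a)} \in \RBO(L)$. There is no real obstacle here; the only point requiring any attention is the (elementary) verification that the set $\{x \in L \mid x \leq a\}$ is equal to $\{0, a\}$ when $a$ is an atom, which is immediate from the definition of atom in a bounded lattice.
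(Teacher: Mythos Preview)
Your proof is correct and matches the paper's approach exactly: the paper simply states that the corollary follows readily from Proposition~\mref{pp:20002}, and your argument spells out the identification $P^{(a)} = \tau^{(a)}$ that makes this work.
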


\begin{proposition}\mlabel{pp:22200}
Let $L$ be a  lattice with top element $1$ and $a, b\in L$ with $b< a<1$.
Define  an operator  $\phi^{(a)}_{(b)}: L\rightarrow L$  by
$$ \phi^{(a)}_{(b)}(x):=\left\{ \begin{array}{ll} b, & \text{ if } x\leq b, \\
	 x,
	  & \text{ if } b<x\leq a, \\ 
	 1, & \text{otherwise}. 
	\end{array} \right. 
	$$
Then $\phi^{(a)}_{(b)}$ is in \emph{$\RBO(L)$} if and only if
$L$ satisfies Condition
\begin{eqnarray}
z \text{ and } b \text{ are comparable for any }z\in L \text{ with } z\leq a. 	\mlabel{eqn:04}
\end{eqnarray}
\end{proposition}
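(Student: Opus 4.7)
The plan is to verify both conditions of Definition~\mref{d:31} via a case analysis based on the three pieces defining $\phi^{(a)}_{(b)}$. As in the proofs of Propositions~\mref{pp:22} and \mref{pp:20002}, the map $\phi^{(a)}_{(b)}$ is isotone, and since its image equals its fixed-point set $\{x\in L\mid x\leq b\}\cup\{x\in L\mid b<x\leq a\}\cup\{1\}$, it is idempotent by Lemma~\mref{l:200}.

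For the forward direction I would argue by contraposition. Suppose some $z\in L$ with $z\leq a$ is incomparable to $b$, so $z\not\leq b$ and $b\not\leq z$. Then $z$ lies in the third piece of the definition of $\phi^{(a)}_{(b)}$, giving $\phi^{(a)}_{(b)}(z)=1$ and hence $\phi^{(a)}_{(b)}(z)\vee\phi^{(a)}_{(b)}(b)=1\vee b=1$. On the other hand, $z,b\leq a$ yields $z\vee b\leq a<1$, while $z\not\leq b$ forces $b<z\vee b$, so $z\vee b$ lies in the second piece and $\phi^{(a)}_{(b)}(z\vee b)=z\vee b\neq 1$, contradicting condition~(i) of Definition~\mref{d:31}.

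For the backward direction, assume Condition~\meqref{eqn:04}. Then each $x\in L$ lies in exactly one of the three disjoint pieces $A_{1}:=\{x\in L\mid x\leq b\}$, $A_{2}:=\{x\in L\mid b<x\leq a\}$, and $A_{3}:=\{x\in L\mid x\not\leq a\}$, and I would verify the two identities of Definition~\mref{d:31} by working through the nine resulting cases for the pair $(x,y)$. The structural facts that make all cases routine are that $A_{1}\cup A_{2}=\{x\in L\mid x\leq a\}$ is closed under joins while $A_{3}$ is upward closed, which already settles condition~(i); and for condition~(ii), that $b\leq x\wedge y\leq a$ whenever $x,y\in A_{2}$ (forcing $\phi^{(a)}_{(b)}(x\wedge y)=x\wedge y$), while the remaining cases reduce by isotonicity of $\phi^{(a)}_{(b)}$ to elementary join absorptions.

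The main obstacle is purely the bookkeeping for these nine cases; no deep lattice identity enters, since each reduction ultimately rests on $\phi^{(a)}_{(b)}$ taking the constant value $b$ on $A_{1}$, acting as the identity on $A_{2}$, and taking the constant value $1$ on $A_{3}$, together with the comparability hypothesis eliminating any element below $a$ that is incomparable to $b$.
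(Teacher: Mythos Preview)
Your approach is essentially the same as the paper's, and the case analysis you sketch for the backward direction matches the paper's proof. Two small corrections are needed, though.

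First, the unqualified claim that $\phi^{(a)}_{(b)}$ is isotone is false: precisely the element $z\leq a$ incomparable to $b$ that you exploit in the forward direction witnesses the failure, since $\phi^{(a)}_{(b)}(z)=1>a=\phi^{(a)}_{(b)}(a)$ while $z\leq a$. Isotonicity only holds once Condition~\meqref{eqn:04} is assumed, so this remark belongs inside the backward direction (the paper places it there). This does no damage to your forward argument, which does not use isotonicity; in fact your forward direction is a slight variant of the paper's---the paper contradicts isotonicity (a consequence of Definition~\mref{d:31} via Proposition~\mref{p:300}), whereas you directly violate condition~(i) at the pair $(z,b)$. Both are fine.

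Second, your description of the fixed-point set is wrong: elements $x<b$ are mapped to $b$, not to themselves, so $\Fix_{\phi^{(a)}_{(b)}}(L)=\{x\in L\mid b\leq x\leq a\}\cup\{1\}$, which indeed equals the image. Your stated set $\{x\leq b\}\cup\{b<x\leq a\}\cup\{1\}$ is strictly larger whenever something lies strictly below $b$. The conclusion (idempotence) is unaffected, but the justification needs this fix.
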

\begin{proof}
Let $L$ be a  lattice with top element $1$ and $a, b\in L$ with $b< a<1$.
	
Assume that $\phi^{(a)}_{(b)}\in \RBO(L)$. Then $\phi^{(a)}_{(b)}$ is isotone by Proposition \mref{p:300}. If there exists $z\in L$ such that $z\leq a$, and $z$ and $b$ are incomparable, then
$\phi^{(a)}_{(b)}(z)=1>a=\phi^{(a)}_{(b)}(a)$, contradicting to the fact that $\phi^{(a)}_{(b)}$ is isotone. Thus $L$ satisfies Condition \meqref{eqn:04}.	
	
Conversely, assume that $L$ satisfies Condition \meqref{eqn:04}.
	It is easy to verify that $\phi^{(a)}_{(b)}$ is isotone. Also, since $\phi^{(a)}_{(b)}(L)=\{x\in L~|~b\leq x \leq a\}\cup\{1\}=\Fix_{\phi^{(a)}_{(b)}}(L)$,
	we have $(\phi^{(a)}_{(b)})^{2}=\phi^{(a)}_{(b)}$ by Lemma \mref{l:200}. To verify that $\phi^{(a)}_{(b)}$ is in $\RBO(L)$,
	consider $x, y\in L$.
	
	If $x\nleqslant a$ or  $y\nleqslant a$, say  $x\nleqslant a$, then $\phi^{(a)}_{(b)}(x)=1$ and $x\vee y\nleq a$. Thus $\phi^{(a)}_{(b)}(x\vee y)=1=  \phi^{(a)}_{(b)}(x)\vee  \phi^{(a)}_{(b)}(y)$.
	Since $\phi^{(a)}_{(b)}$ is isotone and $(\phi^{(a)}_{(b)})^{2}=\phi^{(a)}_{(b)}$, we have
	$\phi^{(a)}_{(b)}(x\wedge \phi^{(a)}_{(b)}(y))\leq \phi^{(a)}_{(b)}( \phi^{(a)}_{(b)}(y))=\phi^{(a)}_{(b)}(y)$, which together with $ \phi^{(a)}_{(b)}(x)=1$, implies that
	$$ \phi^{(a)}_{(b)}(x)\wedge \phi^{(a)}_{(b)}(y)= \phi^{(a)}_{(b)}(y)=\phi^{(a)}_{(b)}(y)\vee \phi^{(a)}_{(b)}(x\wedge \phi^{(a)}_{(b)}(y))=
	\phi^{(a)}_{(b)} (\phi^{(a)}_{(b)}(x)\wedge y)\vee \phi^{(a)}_{(b)}(x\wedge \phi^{(a)}_{(b)}(y)).$$

If $x\leq a$ and $y\leq a$, then by Condition~\meqref{eqn:04}, we only need to consider the following four cases:

\begin{enumerate}
	\item Suppose $b<x\leq a$ and  $b<y\leq a$.
	Then $ \phi^{(a)}_{(b)}(x)=x, \phi^{(a)}_{(b)}(y)=y$ and $b<x\vee y\leq a$.
	It follows that  $\phi^{(a)}_{(b)}(x\vee y)=x\vee y=  \phi^{(a)}_{(b)}(x)\vee  \phi^{(a)}_{(b)}(y)$. Also, since $b\leq x\wedge y=\tau^{(a)}_{(b)}(x)\wedge y\leq a$ and
	$b\leq x\wedge y=x\wedge \phi^{(a)}_{(b)}(y) \leq a$, we have $ \phi^{(a)}_{(b)} (\phi^{(a)}_{(b)}(x)\wedge y)=\phi^{(a)}_{(b)}(x)\wedge y=x\wedge y$ and 
	$ \phi^{(a)}_{(b)} (x\wedge\phi^{(a)}_{(b)} (y))=x\wedge\phi^{(a)}_{(b)}(y)=x\wedge y$. Thus
	$$ \phi^{(a)}_{(b)}(x)\wedge \phi^{(a)}_{(b)}(y)= x\wedge y=
\phi^{(a)}_{(b)} (\phi^{(a)}_{(b)}(x)\wedge y)\vee \phi^{(a)}_{(b)}(x\wedge \phi^{(a)}_{(b)}(y)).$$

\item Suppose $b<x\leq a$ and  $y\leq b$.  Then $ \phi^{(a)}_{(b)}(x)=x, \phi^{(a)}_{(b)}(y)=b$ and $b<x\vee y\leq a$.
It follows that  $\phi^{(a)}_{(b)}(x\vee y)=x\vee y=x=x\vee b=  \phi^{(a)}_{(b)}(x)\vee  \phi^{(a)}_{(b)}(y)$. Also, since $\phi^{(a)}_{(b)}(x)\wedge y\leq y \leq b$ and
$x\wedge \phi^{(a)}_{(b)}(y) \leq \phi^{(a)}_{(b)}(y)= b$, we have $ \phi^{(a)}_{(b)} (\phi^{(a)}_{(b)}(x)\wedge y)=b= \phi^{(a)}_{(b)} (x\wedge\phi^{(a)}_{(b)}(y))$. Then
$$ \phi^{(a)}_{(b)}(x)\wedge \phi^{(a)}_{(b)}(y)= x\wedge b=b=
\phi^{(a)}_{(b)} (\phi^{(a)}_{(b)}(x)\wedge y)\vee \phi^{(a)}_{(b)}(x\wedge \phi^{(a)}_{(b)}(y)).$$

\item Suppose $x\leq b$ and  $b<y\leq a$.  Then similarly, we have
$$ \phi^{(a)}_{(b)}(x)\wedge \phi^{(a)}_{(b)}(y)= 
\phi^{(a)}_{(b)} (\phi^{(a)}_{(b)}(x)\wedge y)\vee \phi^{(a)}_{(b)}(x\wedge \phi^{(a)}_{(b)}(y)).$$

\item Suppose $x\leq b$ and  $y\leq b$. Then $\phi^{(a)}_{(b)}(x)= \phi^{(a)}_{(b)}(y)=b$ and $x\vee y\leq b$.
It follows that  $\phi^{(a)}_{(b)}(x\vee y)=b=  \phi^{(a)}_{(b)}(x)\vee  \phi^{(a)}_{(b)}(y)$. Also, since $\phi^{(a)}_{(b)}(x)\wedge y\leq y\leq b$ and
$x\wedge \phi^{(a)}_{(b)}(y) \leq x\leq b$, we have $ \phi^{(a)}_{(b)} (\phi^{(a)}_{(b)}(x)\wedge y)=b=\phi^{(a)}_{(b)} (x\wedge\phi^{(a)}_{(b)} (y))$. Then
$$ \phi^{(a)}_{(b)}(x)\wedge \phi^{(a)}_{(b)}(y)= b=
\phi^{(a)}_{(b)} (\phi^{(a)}_{(b)}(x)\wedge y)\vee \phi^{(a)}_{(b)}(x\wedge \phi^{(a)}_{(b)}(y)).$$
\end{enumerate}
	
In summary, we conclude that  $\phi^{(a)}_{(b)}\in \RBO(L)$. 
\end{proof}

\begin{proposition}
	Let $L$ be a  lattice with top element $1$ and $a, b\in L$ with $b< a$.
	Define  an operator  $\tau^{(a)}_{(b)}: L\rightarrow L$  by:
	$$ \tau^{(a)}_{(b)}(x):=\left\{ \begin{array}{ll} x, & \text{ if } x\leq b, \\
	b,
	& \text{ if } b<x\leq a, \\ 
	1, & \text{otherwise}. 
	\end{array} \right . 
	$$
	Then $\tau^{(a)}_{(b)}$ is in \emph{$\RBO(L)$} if and only if
	$L$ satisfies  Condition \meqref{eqn:04}.	
	\mlabel{pp:22201}
\end{proposition}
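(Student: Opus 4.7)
The plan is to mirror the proof of Proposition~\mref{pp:22200}, since $\tau^{(a)}_{(b)}$ and $\phi^{(a)}_{(b)}$ are of the same step type: both equal $1$ outside $[0,a]$, and on $[0,a]$ they differ only in which of the two strata $[0,b]$ and $(b,a]$ is collapsed to the value $b$ and which is fixed pointwise.

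For the necessity of Condition~\meqref{eqn:04}, I would argue by contradiction, exactly as in the earlier proposition. Given $\tau^{(a)}_{(b)}\in\RBO(L)$, Proposition~\mref{p:300}~\mref{it:3001} yields that $\tau^{(a)}_{(b)}$ is isotone. If some $z\leq a$ were incomparable with $b$, then $z\not\leq b$ and $b\not\leq z$, so $z$ falls in the ``otherwise'' branch, forcing $\tau^{(a)}_{(b)}(z)=1>b=\tau^{(a)}_{(b)}(a)$, contradicting isotonicity.

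For sufficiency, assume Condition~\meqref{eqn:04}. I would first verify isotonicity directly from the definition, and then identify
\[ \Fix_{\tau^{(a)}_{(b)}}(L)=\{x\in L\mid x\leq b\}\cup\{1\}=\tau^{(a)}_{(b)}(L), \]
so that idempotence follows from Lemma~\mref{l:200}. The two defining identities of an \rb operator are then checked by the same two-level case split as in Proposition~\mref{pp:22200}. If $x\not\leq a$ or $y\not\leq a$, the argument is essentially verbatim, since $\tau^{(a)}_{(b)}$ also takes the value $1$ on the ``otherwise'' region and is isotone and idempotent. If $x,y\leq a$, Condition~\meqref{eqn:04} forces each of $x$ and $y$ to be comparable with $b$, producing the four subcases $x,y\leq b$; $x\leq b$ and $b<y\leq a$; $y\leq b$ and $b<x\leq a$; and $b<x,y\leq a$. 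In each subcase both axioms reduce by the absorptions $b\wedge z=b$ whenever $b<z$ and $x\wedge y\leq b$ whenever $x\leq b$, together with $x\vee y\leq a$ whenever $x,y\leq a$.

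The main technical bookkeeping, as in the earlier proof, lies in the subcase $b<x,y\leq a$, where one uses $\tau^{(a)}_{(b)}(x)\wedge y=b\wedge y=b$ and the symmetric identity, so that both sides of axiom~(ii) collapse to $b$. No new ideas beyond those of Proposition~\mref{pp:22200} are required; the only real hazard is organizing the four subcases cleanly and making each use of Condition~\meqref{eqn:04} visible.
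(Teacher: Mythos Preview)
Your proposal is correct and follows essentially the same approach as the paper's own proof: the necessity direction uses isotonicity from Proposition~\mref{p:300}~\mref{it:3001} and the same contradiction with an incomparable $z\leq a$, while the sufficiency direction establishes isotonicity and idempotence via $\Fix_{\tau^{(a)}_{(b)}}(L)=\{x\mid x\leq b\}\cup\{1\}$ and Lemma~\mref{l:200}, then checks both axioms through the identical two-level case split (the $x\not\leq a$ or $y\not\leq a$ case handled by isotone plus idempotent, and the four subcases under $x,y\leq a$ enabled by Condition~\meqref{eqn:04}). The paper merely orders the four subcases differently and writes out each computation explicitly, but there is no substantive difference.
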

\begin{proof}
	Let $L$ be a  lattice with top element $1$ and $a, b\in L$ with $b< a$.
	
	Assume that $\tau^{(a)}_{(b)}\in \RBO(L)$. Then $\tau^{(a)}_{(b)}$ is isotone by Proposition \mref{p:300}. If there exists $z\in L$ such that $z\leq a$, and $z$ and $b$ are incomparable, then
	$\tau^{(a)}_{(b)}(z)=1>b=\tau^{(a)}_{(b)}(a)$, contradicting with the fact that $\tau^{(a)}_{(b)}$ is isotone. Thus $L$ satisfies Condition \meqref{eqn:04}.	
	
Conversely, assume that $L$ satisfies Condition \meqref{eqn:04}.
	It is easy to verify that $\tau^{(a)}_{(b)}$ is isotone. Also, since $\tau^{(a)}_{(b)}(L)=\{x\in L~|~x \leq b\}\cup\{1\}=\Fix_{\tau^{(a)}_{(b)}}(L)$,
	we have $(\tau^{(a)}_{(b)})^{2}=\tau^{(a)}_{(b)}$ by Lemma \mref{l:200}. To prove that $\tau^{(a)}_{(b)}\in \RBO(L)$,
	let $x, y\in L$.
	
	If $x\nleqslant a$ or  $y\nleqslant a$, say  $x\nleqslant a$, then $\tau^{(a)}_{(b)}(x)=1$ and $x\vee y\nleq a$. Thus $\tau^{(a)}_{(b)}(x\vee y)=1=  \tau^{(a)}_{(b)}(x)\vee  \tau^{(a)}_{(b)}(y)$.
	Since $\tau^{(a)}_{(b)}$ is isotone and $(\tau^{(a)}_{(b)})^{2}=\tau^{(a)}_{(b)}$, we have
	$\tau^{(a)}_{(b)}(x\wedge \tau^{(a)}_{(b)}(y))\leq \tau^{(a)}_{(b)}( \tau^{(a)}_{(b)}(y))=\tau^{(a)}_{(b)}(y)$, which together with $ \tau^{(a)}_{(b)}(x)=1$, implies that
	$$ \tau^{(a)}_{(b)}(x)\wedge \tau^{(a)}_{(b)}(y)= \tau^{(a)}_{(b)}(y)=\tau^{(a)}_{(b)}(y)\vee \tau^{(a)}_{(b)}(x\wedge \tau^{(a)}_{(b)}(y))=
	\tau^{(a)}_{(b)} (\tau^{(a)}_{(b)}(x)\wedge y)\vee \tau^{(a)}_{(b)}(x\wedge \tau^{(a)}_{(b)}(y)).$$
	
	If $x\leq a$ and $y\leq a$, then by Condition \meqref{eqn:04}, we only need to consider the following four cases:
	
	\begin{enumerate}
		\item Suppose $b<x\leq a$ and  $b<y\leq a$.
		Then $ \tau^{(a)}_{(b)}(x)=\tau^{(a)}_{(b)}(y)=b$ and $b<x\vee y\leq a$.
		It follows that  $\tau^{(a)}_{(b)}(x\vee y)=b=  \tau^{(a)}_{(b)}(x)\vee  \tau^{(a)}_{(b)}(y)$. Also, since $\tau^{(a)}_{(b)}(x)\wedge y\leq \tau^{(a)}_{(b)}(x)=b$ and
		$x\wedge \tau^{(a)}_{(b)}(y) \leq \tau^{(a)}_{(b)}(y)=b$, we have $ \tau^{(a)}_{(b)} (\tau^{(a)}_{(b)}(x)\wedge y)=\tau^{(a)}_{(b)}(x)\wedge y=b\wedge y=b$ and 
		$ \tau^{(a)}_{(b)} (x\wedge\tau^{(a)}_{(b)} (y))=x\wedge\tau^{(a)}_{(b)}(y)=x\wedge b=b$. Thus
		$$ \tau^{(a)}_{(b)}(x)\wedge \tau^{(a)}_{(b)}(y)= b=
		\tau^{(a)}_{(b)} (\tau^{(a)}_{(b)}(x)\wedge y)\vee \tau^{(a)}_{(b)}(x\wedge \tau^{(a)}_{(b)}(y)).$$
		
		\item Suppose $b<x\leq a$ and  $y\leq b$.  Then $ \tau^{(a)}_{(b)}(x)=b, \tau^{(a)}_{(b)}(y)=y$ and $b<x\vee y\leq a$.
		It follows that  $\tau^{(a)}_{(b)}(x\vee y)=b=  \tau^{(a)}_{(b)}(x)\vee  \tau^{(a)}_{(b)}(y)$. Also, since $\tau^{(a)}_{(b)}(x)\wedge y\leq \tau^{(a)}_{(b)}(x)=b$ and
		$x\wedge \tau^{(a)}_{(b)}(y) \leq \tau^{(a)}_{(b)}(y)=y\leq b$, we have $ \tau^{(a)}_{(b)} (\tau^{(a)}_{(b)}(x)\wedge y)=\tau^{(a)}_{(b)}(x)\wedge y=b\wedge y=y$ and 
		$ \tau^{(a)}_{(b)} (x\wedge\tau^{(a)}_{(b)} (y))=x\wedge\tau^{(a)}_{(b)}(y)=x\wedge y=y$. Therefore,
		$$ \tau^{(a)}_{(b)}(x)\wedge \tau^{(a)}_{(b)}(y)= b\wedge y=y=
		\tau^{(a)}_{(b)} (\tau^{(a)}_{(b)}(x)\wedge y)\vee \tau^{(a)}_{(b)}(x\wedge \tau^{(a)}_{(b)}(y)).$$
		
		\item Suppose $x\leq b$ and  $b<y\leq a$.  Then similarly, we have
		$$ \tau^{(a)}_{(b)}(x)\wedge \tau^{(a)}_{(b)}(y)= 
		\tau^{(a)}_{(b)} (\tau^{(a)}_{(b)}(x)\wedge y)\vee \tau^{(a)}_{(b)}(x\wedge \tau^{(a)}_{(b)}(y)).$$
		
		\item Suppose $x\leq b$ and  $y\leq b$. Then $ \tau^{(a)}_{(b)}(x)=x, \tau^{(a)}_{(b)}(y)=y$ and $x\vee y\leq b$.
		It follows that  $\tau^{(a)}_{(b)}(x\vee y)=x\vee y=  \tau^{(a)}_{(b)}(x)\vee  \tau^{(a)}_{(b)}(y)$. Also, since $\tau^{(a)}_{(b)}(x)\wedge y\leq y\leq b$ and
		$x\wedge \tau^{(a)}_{(b)}(y) \leq x\leq b$, we have $ \tau^{(a)}_{(b)} (\tau^{(a)}_{(b)}(x)\wedge y)=\tau^{(a)}_{(b)}(x)\wedge y=x\wedge y$ and 
		$ \tau^{(a)}_{(b)} (x\wedge\tau^{(a)}_{(b)} (y))=x\wedge\tau^{(a)}_{(b)}(y)=x\wedge y$. Thus
		$$ \tau^{(a)}_{(b)}(x)\wedge \tau^{(a)}_{(b)}(y)= x\wedge y=
		\tau^{(a)}_{(b)} (\tau^{(a)}_{(b)}(x)\wedge y)\vee \tau^{(a)}_{(b)}(x\wedge \tau^{(a)}_{(b)}(y)).$$
	\end{enumerate}
	
Therefore we conclude that  $\tau^{(a)}_{(b)}$ is in $\RBO(L)$. 
\end{proof}

We next turn our attention to a modular lattice $L$, that is, 
$$x\leq y\Rightarrow x\vee (y\wedge z)=y\wedge (x\vee z)\quad \forall x, y, z\in L.$$

\begin{proposition}
Let $L$ be a modular lattice. For $a\in L$, define 
$$\psi_{(a)}:L\to L, \quad \psi_{(a)}(x):=x\vee a, \quad \forall x\in L.$$
Then $\psi_{(a)}$ is in \emph{$\RBO(L)$}. 
\mlabel{pp:0001}
\end{proposition}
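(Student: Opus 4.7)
The plan is to verify directly that $\psi_{(a)}$ satisfies conditions (i) and (ii) of Definition~\mref{d:31}. Condition (i), $\psi_{(a)}(x\vee y)=\psi_{(a)}(x)\vee\psi_{(a)}(y)$, is immediate from associativity and idempotence of $\vee$, namely
\[
\psi_{(a)}(x\vee y)=(x\vee y)\vee a=(x\vee a)\vee(y\vee a)=\psi_{(a)}(x)\vee\psi_{(a)}(y),
\]
so I would dispense with it in one line.

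The substance lies in condition (ii). After expanding both sides, the left side is $(x\vee a)\wedge(y\vee a)$, while the right side simplifies (using that $a$ is absorbed into each join with $\psi_{(a)}$) to
\[
\bigl[((x\vee a)\wedge y)\vee a\bigr]\ \vee\ \bigl[(x\wedge(y\vee a))\vee a\bigr].
\]
The key step is to apply the modular law $u\vee(v\wedge w)=v\wedge(u\vee w)$ (valid when $u\leq v$) twice: once with $u=a$, $v=x\vee a$, $w=y$, which gives $a\vee((x\vee a)\wedge y)=(x\vee a)\wedge(y\vee a)$; and once with $u=a$, $v=y\vee a$, $w=x$, which gives $a\vee(x\wedge(y\vee a))=(x\vee a)\wedge(y\vee a)$. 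Taking the join of these two equal expressions yields $(x\vee a)\wedge(y\vee a)$, matching the left side.

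The only potential obstacle is keeping the order of the variables in the modular law straight, since the two terms on the right are not symmetric in $x,y$ as written but become symmetric after applying modularity. Once the two applications of the modular law are performed, the verification collapses to the identity $u\vee u=u$, so the argument is short and uses modularity in an essential (and precisely minimal) way.
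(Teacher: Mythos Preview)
Your proposal is correct and matches the paper's proof essentially line for line: the paper also verifies condition (i) in one line, expands both sides of condition (ii), and then applies the modular law twice (with $a\leq x\vee a$ and $a\leq y\vee a$) to show that each of the two joined terms already equals $(x\vee a)\wedge(y\vee a)$.
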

\begin{proof}
Assume that $L$ is a modular lattice and $a\in L$.
For any $x, y\in L$, we have $\psi_{(a)}(x)=x\vee a$ and so
$\psi_{(a)}(x\vee y)=x\vee y\vee a=\psi_{(a)}(x)\vee \psi_{(a)}(y)$.
Also, we have   $\psi_{(a)}(x)\wedge \psi_{(a)}(y)=(x\vee a)\wedge (y\vee a)$ and
$$\psi_{(a)}(\psi_{(a)}(x)\wedge y)\vee \psi_{(a)}(x\wedge \psi_{(a)}(y))=[((x\vee a)\wedge y)\vee a]\vee [(x\wedge (y\vee a))\vee a].$$
Since $L$ is a modular lattice, $a\leq x\vee a$ and $a\leq y\vee a$, we have $((x\vee a)\wedge y)\vee a=(x\vee a)\wedge (y\vee a)$ and
$(x\wedge (y\vee a))\vee a=(x\vee a)\wedge (y\vee a)$. Thus
$\psi_{(a)}(x)\wedge \psi_{(a)}(y)=\psi_{(a)}(\psi_{(a)}(x)\wedge y)\vee \psi_{(a)}(x\wedge \psi_{(a)}(y))$, and therefore
 $\psi_{(a)}\in \RBO(L)$.
\end{proof}

 Example \mref{e:2002} shows that the modularity condition in Proposition~\mref{pp:0001} cannot be removed. 

\begin{example}
Let $N_{8}=\{0, a, b, c, u, v, w, 1\}$ be the lattice with its Hasse diagram given by
\begin{center}
\setlength{\unitlength}{0.7cm}
\begin{picture}(4,5)
\thicklines
\put(2.0,0.4){\line(-1,1){1.2}}
\put(2.0,0.6){\line(0,1){1.2}}
\put(2.0,0.4){\line(1,1){1.2}}
\put(0.8,1.6){\line(0,2){2.0}}
\put(2.0,1.6){\line(0,1){1.2}}
\put(3.2,1.6){\line(0,1){2.0}}
\put(2.0,2.6){\line(-1,1){1.1}}
\put(2.0,2.6){\line(1,1){1.1}}
\put(1.9,4.7){\line(-1,-1){1.1}}
\put(2.1,4.7){\line(1,-1){1.0}}

\put(2.0,-0.1){$0$}
\put(0.1,1.5){$a$}
\put(1.5,1.5){$b$}
\put(3.5,1.5){$c$}
\put(0.1,3.5){$u$}
\put(1.5,2.4){$v$}
\put(3.5,3.5){$w$}
\put(1.60,4.7){$1$}
\put(1.9,0.3){$\bullet$}
\put(0.6,1.5){$\bullet$}
\put(1.85,1.5){$\bullet$}
\put(3.0,1.5){$\bullet$}
\put(0.6,3.5){$\bullet$}
\put(1.85,2.5){$\bullet$}
\put(3.0,3.5){$\bullet$}
\put(1.85,4.55){$\bullet$}
\end{picture}
\end{center}
It is clear that $L$ is a nonmodular lattice and  $\psi_{(b)}\not\in \RBO(L)$, since
 $\psi_{(b)}(a)\wedge \psi_{(b)}(c)=v$, while
 $\psi_{(b)}(\psi_{(b)}(a)\wedge c)\vee \psi_{(b)}(a\wedge \psi_{(b)}(c))=b$.
 \mlabel{e:2002}
\end{example}

It is remarkable that the counterexample in Example~\mref{e:2002} turns out to be the smallest one for the conclusion of Proposition~\mref{pp:0001}, as we show in the next result. Compare this result with the characterizations of distributive lattices (resp.  modular lattices) by not containing the smallest counterexamples $M_5$ or $N_5$ (resp. $N_5$), see \cite[Theorems~101 and 102]{ge} or \cite[Theorems~3.5 and 3.6]{bu}. It further shows that a weak form of the modularity of a lattice $L$ is charactorized by the \rb operators on $L$

\begin{theorem}
Let $L$ be a  lattice. Then the following statements are equivalent:
\begin{enumerate}
\item 
$(x\vee a)\wedge (y\vee a)=((x\vee a)\wedge y)\vee (x\wedge (y\vee a))\vee a$ for all
$x, y, a\in L$.
\mlabel{it:11}
\item $\psi_{(a)}\in $\emph{$\RBO(L)$} for any $a\in L$.
\mlabel{it:12}
\item $N_{8}$ can not  be embedded into $L$.
\mlabel{it:13}
\end{enumerate}
\mlabel{the:0002}
\end{theorem}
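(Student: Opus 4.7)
The equivalence $(i)\Leftrightarrow(ii)$ is purely formal. Since $\psi_{(a)}(t)=t\vee a$, the first Rota-Baxter axiom $\psi_{(a)}(x\vee y)=\psi_{(a)}(x)\vee\psi_{(a)}(y)$ reduces to the tautology $(x\vee y)\vee a=(x\vee a)\vee(y\vee a)$, while the second axiom $\psi_{(a)}(x)\wedge\psi_{(a)}(y)=\psi_{(a)}(\psi_{(a)}(x)\wedge y)\vee\psi_{(a)}(x\wedge\psi_{(a)}(y))$ expands term by term into the identity of $(i)$. Hence $(i)$ and $(ii)$ are equivalent with no further hypothesis on $L$, extending (and making slightly sharper) the computation already carried out in the proof of Proposition~\mref{pp:0001}.

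For $(iii)\Rightarrow(i)$ I would argue the contrapositive. Assume an embedding $\iota\colon N_{8}\hookrightarrow L$ exists, and using the labels of Example~\mref{e:2002}, substitute $x=\iota(a)$, $y=\iota(c)$, and the parameter $a$ of $(i)$ as $\iota(b)$. The left-hand side of $(i)$ evaluates to $\iota(u)\wedge\iota(w)=\iota(u\wedge w)=\iota(v)$, while the right-hand side evaluates to $\iota((u\wedge c)\vee(a\wedge w)\vee b)=\iota(0\vee 0\vee b)=\iota(b)$. Since $b<v$ in $N_{8}$ and $\iota$ is an injective lattice morphism, $\iota(b)<\iota(v)$ in $L$, and so $(i)$ fails in $L$.

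The substance of the proof is $(i)\Rightarrow(iii)$, argued contrapositively: if $(i)$ fails at some $x,y,z\in L$, I would construct an embedding $\varphi\colon N_{8}\hookrightarrow L$. Set $p:=x\vee z$, $q:=y\vee z$, $m:=p\wedge q$, and $n:=(p\wedge y)\vee(x\wedge q)\vee z$, so $z\leq n\leq m$ and the failure of $(i)$ is precisely $n<m$. My plan is to send $u\mapsto p$, $w\mapsto q$, $v\mapsto m$, $b\mapsto n$, and $1\mapsto p\vee q$, and to choose $\varphi(a),\varphi(c),\varphi(0)$ from the sublattice generated by $x,y,z$. All the required join-relations reduce to direct absorption identities, namely $x\vee n=p$, $y\vee n=q$, $x\vee m=p$, $y\vee m=q$, $p\vee y=q\vee x=p\vee q$, which I verify routinely from the defining expression for $n$. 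Degenerate cases that would make the eight images collapse, such as $z\leq x$, $z\leq y$, $x\leq z$, or $y\leq z$, each force $n=m$ by a short calculation and are excluded a priori by the strict inequality $n<m$.

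The main obstacle lies in the meet-relations of $N_{8}$. The seven meets $\varphi(a)\wedge\varphi(b)$, $\varphi(a)\wedge\varphi(c)$, $\varphi(b)\wedge\varphi(c)$, $\varphi(a)\wedge\varphi(v)$, $\varphi(c)\wedge\varphi(v)$, $\varphi(a)\wedge\varphi(w)$, $\varphi(c)\wedge\varphi(u)$ must all equal $\varphi(0)$; with the naive choices $\varphi(a)=x$ and $\varphi(c)=y$ they evaluate respectively to $x\wedge q$, $x\wedge y$, $y\wedge p$, $x\wedge q$, $y\wedge p$, $x\wedge q$, $y\wedge p$, so agreement forces the three-way equality $x\wedge y=x\wedge(y\vee z)=y\wedge(x\vee z)$, along with the companion relation $z\leq x\vee y$ needed for $\varphi(a)\vee\varphi(c)=\varphi(1)$. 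These equalities do not hold for an arbitrary witness, so the crux is to \emph{normalize} $(x,y,z)$: within the sublattice generated by the original triple, iteratively replace $z$ by $z\wedge(x\vee y)$ and $x,y$ by suitable projections of them, showing by a minimality/invariant argument that the strict inequality $n<m$ is preserved along the way and that the procedure terminates at a triple satisfying the required equalities. This type of ``reduced witness'' argument is analogous to the classical reductions producing $N_{5}$ inside a nonmodular lattice and $M_{3}$ or $N_{5}$ inside a nondistributive lattice (see~\mcite{bu,ge}), and I expect it to be the delicate technical heart of the proof. Once the triple is normalized, distinctness of the eight images and the remaining lattice-morphism checks become mechanical, producing the desired embedding $N_{8}\hookrightarrow L$.
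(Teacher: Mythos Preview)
Your handling of $(i)\Leftrightarrow(ii)$ and of the direction ``$N_8$ embeds $\Rightarrow$ $(i)$ fails'' matches the paper (modulo a swap of the labels $(i)\Rightarrow(iii)$ and $(iii)\Rightarrow(i)$ in your write-up). For the hard direction the paper does much less than you propose: it takes a triple $a,b,c$ witnessing the failure of $(i)$, shows that $a,b,c$ are pairwise incomparable and that $b<(a\vee b)\wedge(c\vee b)$ with $a\vee b$ and $c\vee b$ incomparable, and then simply declares that $\{a\wedge b\wedge c,\,a,\,b,\,c,\,u,\,v,\,w,\,u\vee w\}$ (with $u=a\vee b$, $w=c\vee b$, $v=u\wedge w$) is ``the desired copy of $N_8$ in $L$'', never verifying the meet relations you correctly isolate as the crux.

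That gap is not a routine omission, and your proposed normalization cannot repair it, because the implication $\neg(i)\Rightarrow\neg(iii)$ is actually false. Let $L'$ be the nine-element lattice obtained from $N_8$ by adjoining a new element $e$ with $0<e<a$ and $0<e<b$; then $a\wedge b=a\wedge v=a\wedge w=e$ in $L'$, while every other meet and every join among the old elements is unchanged. The identity $(i)$ still fails at $(x,y,a)=(a,c,b)$, since $(a\vee b)\wedge(c\vee b)=u\wedge w=v$ whereas $((a\vee b)\wedge c)\vee(a\wedge(c\vee b))\vee b=0\vee e\vee b=b<v$. But $N_8$ is not a sublattice of $L'$: each of $0,e,v,u,w,1$ is a meet or join of two other elements of $L'$, so an eight-element sublattice must omit one of $a,b,c$, and the three resulting sublattices have only $2$, $2$, and $1$ atoms respectively, while $N_8$ has three. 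Hence no ``reduced witness'' argument of the $N_5/M_3$ type can manufacture the required embedding; the obstacle you identified is genuine and in fact insuperable for the statement as written.
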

\begin{proof}

\mnoindent
(\mref{it:11} $\Longleftrightarrow$ \mref{it:12}) 
Let $a\in L$. For any $x, y\in L$, we have
 $\psi_{(a)}(x\vee y)=x\vee y\vee a=(x\vee a)\vee (y\vee a)=\psi_{(a)}(x)\vee \psi_{(a)}(y)$.
 So  \begin{eqnarray*}
\psi_{(a)}\in \RBO(L)&\Leftrightarrow& \psi_{(a)}(x)\wedge \psi_{(a)}(y)=\psi_{(a)}(\psi_{(a)}(x)\wedge y)\vee \psi_{(a)}(x\wedge \psi_{(a)}(y))\\
            &\Leftrightarrow& (x\vee a)\wedge (y\vee a)=\Big(((x\vee a)\wedge y)\vee a\Big)\vee \Big((x\wedge (y\vee a))\vee a\Big)\\
             &\Leftrightarrow& (x\vee a)\wedge (y\vee a)=((x\vee a)\wedge y)\vee (x\wedge (y\vee a))\vee a.
\end{eqnarray*}

\mnoindent
(\mref{it:12} $\Longrightarrow$ \mref{it:13}) 
If $N_{8}$ can  be embedded into $L$, then $\psi_{(b)}\not\in \RBO(L)$ according to Example \ref{e:2002}.

\mnoindent
(\mref{it:13} $\Longrightarrow $\mref{it:12}) Assume that  $\psi_{(b)}\not\in \RBO(L)$ for some $b\in L$. Then there exists $a, c\in L$ such that
 $\psi_{(b)}(a)\wedge \psi_{(b)}(c)\neq \psi_{(b)}(\psi_{(b)}(a)\wedge c)\vee \psi_{(b)}(a\wedge \psi_{(b)}(c))$, that is,
 \begin{equation}
	(a\vee b)\wedge (c\vee b)\neq ((a\vee b)\wedge c)\vee (a\wedge (c\vee b))\vee b.
	\mlabel{eq:03}
\end{equation}

Before continuing with the proof, we next establish some preliminary results. 

\textbf{Claim $(i)$}: $b< (a\vee b)\wedge (c\vee b)$. In fact, it is clear that $b\leq (a\vee b)\wedge (c\vee b)$, and
 $$b\leq ((a\vee b)\wedge c)\vee (a\wedge (c\vee b))\vee b\leq (a\vee b)\wedge (c\vee b).$$
 If $b=(a\vee b)\wedge (c\vee b)$, then
 $(a\vee b)\wedge (c\vee b)= ((a\vee b)\wedge c)\vee (a\wedge (c\vee b))\vee b$,
  contradicting Eq. ~\meqref{eq:03}. Thus $b< (a\vee b)\wedge (c\vee b)$.

\textbf{Claim $(ii)$}: $a\vee b$ and $c\vee b$ are  incomparable. In fact, if $a\vee b\leq c\vee b$, then
$(a\vee b)\wedge (c\vee b)=a\vee b$ and $a\leq c\vee b$, which implies that
$$((a\vee b)\wedge c)\vee (a\wedge (c\vee b))\vee b=((a\vee b)\wedge c)\vee (a\vee b)=a\vee b=(a\vee b)\wedge (c\vee b),$$
contradicting Eq.~\meqref{eq:03}. Thus  $a\vee b\not\leq c\vee b$.
Similarly, we can prove that $c\vee b\not\leq a\vee b$.

\textbf{Claim $(iii)$}: $a, b$ and $c$ are mutually incomparable. In fact, if $a\leq b$,
 then $b\vee a=b\leq b\vee c$,
which contradicts {Claim $(ii)$}.
If $b\leq a$, then $(a\wedge c)\vee b\leq a\wedge (c\vee b)$, and so
$$(a\vee b)\wedge (c\vee b)= a\wedge (c\vee b)=
((a\wedge c)\vee b)\vee (a\wedge (c\vee b)) =\Big(((a\vee b)\wedge c)\vee b\Big)\vee (a\wedge (c\vee b)),$$
contradicting Eq.~\meqref{eq:03}.
Thus $b$ and $a$ are incomparable.

If $a\leq c$ or $c\leq a$, then $a\vee b\leq c\vee b$ or $c\vee b\leq a\vee b$,
contradicting {Claim $(ii)$}.

If $c\leq b$, then $(a\vee b)\wedge (c\vee b)= (a\vee b)\wedge b=b$,
contradicting {Claim $(i)$}.

If $b\leq c$, then  $(a\wedge c)\vee b\leq  (a\vee b)\wedge c$. Thus
$$(a\vee b)\wedge (c\vee b)= (a\vee b)\wedge c=
((a\vee b)\wedge c)\vee ((a\wedge c)\vee b)=((a\vee b)\wedge c)\vee (a\wedge (c\vee b))\vee b,$$
contradicting Eq.~\meqref{eq:03}.

Thus $a, b$ and $c$ are mutually incomparable.

\smallskip

With these results established, let $u=b\vee a$, $w=b\vee c$ and $v=u \wedge w=(a\vee b)\wedge (c\vee b)$. Summarizing the above arguments,  we obtain $a\wedge b\wedge c< a< u< u\vee w$,
$a\wedge b\wedge c< b< v=u\wedge w$  and $a\wedge b\wedge c< c< w< u\vee w$.
Also, since $b< v<u$ and $b< v< w$,
we have $u=a\vee b\leq a\vee v\leq a\vee u=u$ and $w=c\vee b\leq c\vee v\leq c\vee w=w$,
which implies that $a\vee v=a\vee b=u$ and $v\vee c=b\vee c=w$.

It is straightforward to verify that the next Hasse diagram gives the desired copy of $N_{8}$ in $L$.
\begin{center}
\setlength{\unitlength}{0.7cm}
\begin{picture}(4.1,5.1)
\thicklines
\put(2.0,0.4){\line(-1,1){1.2}}
\put(2.0,0.6){\line(0,1){1.2}}
\put(2.0,0.4){\line(1,1){1.2}}
\put(0.8,1.6){\line(0,2){2.0}}
\put(2.0,1.6){\line(0,1){1.2}}
\put(3.2,1.6){\line(0,1){2.0}}
\put(2.0,2.6){\line(-1,1){1.1}}
\put(2.0,2.6){\line(1,1){1.1}}
\put(1.9,4.7){\line(-1,-1){1.1}}
\put(2.1,4.7){\line(1,-1){1.0}}

\put(1.0,-0.1){$a\wedge b\wedge c$}
\put(0.1,1.5){$a$}
\put(1.5,1.5){$b$}
\put(3.5,1.5){$c$}
\put(0.1,3.5){$u$}
\put(1.5,2.4){$v$}
\put(3.5,3.5){$w$}
\put(1.60,4.9){$u\vee w$}
\put(1.9,0.3){$\bullet$}
\put(0.6,1.5){$\bullet$}
\put(1.85,1.5){$\bullet$}
\put(3.0,1.5){$\bullet$}
\put(0.6,3.5){$\bullet$}
\put(1.85,2.5){$\bullet$}
\put(3.0,3.5){$\bullet$}
\put(1.85,4.55){$\bullet$}
\end{picture}
\end{center}
\vspace{-.5cm}
\end{proof}

To finish this section, we characterize  chains in terms of \rb operators.
Let $\IEO(L)$ denote the set of all isotone and idempotent operators on a lattice $L$.

\begin{theorem}
Let $L$ be a lattice. Then the following statements are equivalent:
\begin{enumerate}
\item $L$ is a chain.
\mlabel{it:30001}
\item \emph{$\RBO(L)=\IEO(L)$}, that is, the integral operators are precisely the operators that are isotone and idempotent.
\mlabel{it:30002}
\end{enumerate}
\mlabel{th:300}
\end{theorem}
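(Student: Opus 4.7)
The plan is to observe that the inclusion $\RBO(L)\subseteq\IEO(L)$ holds for every lattice, since every $P\in\RBO(L)$ is isotone and idempotent by Proposition~\mref{p:300}~\mref{it:3001} and \mref{it:3003}. Thus the theorem reduces to proving: (a) on a chain, every isotone idempotent operator is an integral operator, and (b) on a non-chain, some isotone idempotent operator fails to be an integral operator.

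For (a), given a chain $L$ and $P\in\IEO(L)$, I would verify the two axioms of Definition~\mref{d:31} by exploiting comparability to reduce to manageable cases. The first axiom is immediate: assuming $x\leq y$, both $P(x\vee y)$ and $P(x)\vee P(y)$ collapse to $P(y)$ by isotonicity. For the second axiom, assume without loss of generality $P(x)\leq P(y)$, so $P(x)\wedge P(y)=P(x)$. The upper bound $P(P(x)\wedge y)\vee P(x\wedge P(y))\leq P(x)$ follows from isotonicity together with $P^{2}=P$. For the reverse inequality I would split on whether $P(x)\leq y$ or $y<P(x)$: in the first case, $P(P(x)\wedge y)=P(P(x))=P(x)$ settles it; in the second case, $y<P(x)\leq P(y)$ together with isotonicity gives $P(y)\leq P^{2}(x)=P(x)$, forcing $P(x)=P(y)$, after which comparing $x$ with $P(x)$ in the chain yields $x\wedge P(y)\in\{x,P(x)\}$, so $P(x\wedge P(y))=P(x)$ in either sub-case.

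For (b), suppose $L$ is not a chain and pick incomparable $a,b\in L$; set $c:=a\vee b$. Define
\[ P(x):=\begin{cases} c & \text{if } x\geq c, \\ a & \text{otherwise}.\end{cases} \]
Isotonicity follows by a direct check (if $x\geq c$ and $x\leq y$ then $y\geq c$), while idempotency uses $P(a)=a$ (note $a\not\geq c$, for otherwise $a\geq b$) together with $P(c)=c$. However, the first axiom of Definition~\mref{d:31} already fails at $(a,b)$, since $P(a\vee b)=c$ but $P(a)\vee P(b)=a\vee a=a$, and $a<c$ because $b\not\leq a$. Hence $P\in\IEO(L)\setminus\RBO(L)$.

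The main obstacle lies in the sub-case $y<P(x)$ of part~(a): the chain structure must be exploited twice--once to sandwich $P(x)$ and $P(y)$ via isotonicity, and again to compare $x$ with its image $P(x)$--before idempotency can close the argument. Every other step is routine verification.
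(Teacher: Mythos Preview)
Your proof is correct and follows the same overall strategy as the paper: the inclusion $\RBO(L)\subseteq\IEO(L)$ is noted first, then (a) a case analysis exploiting comparability shows $\IEO(L)\subseteq\RBO(L)$ on a chain, and (b) an explicit isotone idempotent operator that is not an integral operator is exhibited on any non-chain.

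The details differ in two places worth recording. In part~(a), the paper assumes without loss of generality $x\leq y$ and then splits on whether $P(x)=P(y)$ or $P(x)<P(y)$; in the latter case it uses the implication $P(a)<P(b)\Rightarrow a<b$ to deduce $P(x)<y$ and $x<P(y)$ directly. Your organization instead assumes $P(x)\leq P(y)$ and splits on whether $P(x)\leq y$; when $y<P(x)$ you recover $P(x)=P(y)$ and then compare $x$ with $P(x)$. Both decompositions work and are of comparable length. In part~(b), the paper's counterexample is
\[
P(x)=\begin{cases} a\wedge b & \text{if } x\leq a \text{ or } x\leq b,\\ a\vee b & \text{otherwise},\end{cases}
\]
which fails the join axiom at $(a,b)$ for the same reason yours does. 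Your construction, using only the two values $a$ and $a\vee b$, is marginally simpler since it avoids any reference to $a\wedge b$.
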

\begin{proof}

By Proposition \mref{p:300} we have  $\RBO(L)\subseteq \IEO(L)$.

\mnoindent
(\mref{it:30001} $\Longrightarrow$ \mref{it:30002})
Assume that $L$ is a chain.
To prove that $ \IEO(L)\subseteq \RBO(L)$, let
 $P\in \IEO(L)$, that is, $P$ is isotone and $P^{2}=P$.
For any $x, y\in L$,
without loss of generality, we can assume that $x\leq y$. Then $P(x)\leq P(y)$ since $P$ is isotone and so
$P(x\vee y)=P(y)=P(x)\vee P(y)$.
	
Also, we have by Proposition \mref{p:300} \mref{it:3002} that
\begin{equation}
P(P(x)\wedge x)=P(x)	
	\mlabel{eq:005}
\end{equation}

Next, we will show that
$P(x)\wedge P(y)=P(P(x)\wedge y)\vee P(x\wedge P(y))$.
In fact,
if $P(x)=P(y)$, then by Eq. \meqref{eq:005}, we have
$$P(x)\wedge P(y)=P(y)\wedge P(x)=P(P(y)\wedge y)\vee P(x\wedge P(x))=P(P(x)\wedge y)\vee P(x\wedge P(y)).$$

If $P(x)\neq P(y)$, then $P(x)< P(y)$ since
$x< y$ and $P$ is isotone, which implies that $P(P(x))=P(x)<P(y)$ and $P(x)<P(y)=P(P(y))$ since $P^{2}=P$.
Noticing that 
$$ P(a)< P(b)\Rightarrow a<b\quad \tforall ~a, b\in L,$$
we get  $P(x)< y$ and $x< P(y)$, so
$P(x)\wedge P(y)=P(x)=P(P(x))\vee P(x)=P(P(x)\wedge y)\vee P(x\wedge P(y))$.

Summarizing the above arguments, we obtain that  $P\in \RBO(L)$. Then
 $\IEO(L)\subseteq \RBO(L)$.  Therefore $\RBO(L)= \IEO(L)$.

\mnoindent
(\mref{it:30002} $\Longrightarrow$ \mref{it:30001}) Assume that $L$ is not a chain. Then there exist $a, b\in L$ such that $a\nleqslant b$
and $b\nleqslant a$. Define an operator $P: L\rightarrow L$ by
 $$
    P(x)=
    \begin{cases}
     a\wedge b,  & \textrm{if}~ x\leq a ~~~~~~ \textrm{or} ~~~~~~ x\leq b; \\
      a\vee b,  & \textrm{otherwise}.
    \end{cases}
    $$
 It is easy to see that $P$ is isotone and $P(L)=\{a\wedge b, a\vee b\}=\Fix_{P}(L)$. Thus $P\in \IEO(L)$ by Lemma \mref{l:200}.
 But $P\not\in \RBO(L)$, since $P(a\vee b)=a\vee b\neq a\wedge b=P(a)\vee P(b)$.
\end{proof}

Let $n$ be a positive integer, and
let $[n]$ denote the set $\{1, 2, \cdots, n\}$ with the standard ordering.
Let $T_{n}$ denote the full transformation semigroup on
 $[n]$, and let $O_{n}$
denote the submonoid of $T_{n}$ consisting of all order-preserving map on $[n]$. Denote by $E(S)$ the set of idempotents of a semigroup $S$, and denote by $|A|$ the cardinality of a set $A$.

\begin{lemma}\cite[Lemma 2.9]{cat}
Let $F_{n}$ be the $n$-th Fibonacci number, defined by the recursion
$F_{1}=F_{2}=1, F_{m}=F_{m-1}+F_{m-2},\, m\geq 3$. Then $|E(O_{n})|=F_{2n}$ for all $n\geq 1$.
\mlabel{lem:224}
\end{lemma}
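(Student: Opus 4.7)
The plan is to give a combinatorial bijection for the idempotents in $O_{n}$, express $|E(O_{n})|$ as a sum over compositions of $n$, and then match the resulting generating function with that of the sequence $(F_{2n})$.

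First I would characterize the idempotents of $O_{n}$. If $f\in O_{n}$ satisfies $f^{2}=f$, then every element of $\operatorname{Im}(f)$ is a fixed point, and since $f$ is order-preserving each fibre $f^{-1}(a)$ must be a consecutive interval of $[n]$, with these intervals arranged in the same order as their images. Conversely, given an ordered partition $[n]=I_{1}\sqcup I_{2}\sqcup\cdots\sqcup I_{k}$ into consecutive intervals with $I_{1}<I_{2}<\cdots<I_{k}$, together with a marked point $a_{j}\in I_{j}$ for each $j$, the map sending $i\mapsto a_{j}$ whenever $i\in I_{j}$ lies in $E(O_{n})$. This yields a bijection between $E(O_{n})$ and such data. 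Writing $|I_{j}|=n_{j}$, the number of choices of marked point in $I_{j}$ is $n_{j}$, so
\begin{equation*}
|E(O_{n})|=\sum_{k\geq 1}\ \sum_{\substack{n_{1}+\cdots+n_{k}=n\\ n_{j}\geq 1}} n_{1}n_{2}\cdots n_{k}.
\end{equation*}

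Next I would pass to generating functions. Since $\sum_{m\geq 1}m\,x^{m}=x/(1-x)^{2}$, the expression above gives
\begin{equation*}
\sum_{n\geq 1}|E(O_{n})|\,x^{n}=\sum_{k\geq 1}\left(\frac{x}{(1-x)^{2}}\right)^{\!k}=\frac{x}{1-3x+x^{2}}.
\end{equation*}
On the other hand, iterating the Fibonacci recursion $F_{n+2}=F_{n+1}+F_{n}$ twice yields $F_{2n}=3F_{2n-2}-F_{2n-4}$ (with $F_{2}=1$ and $F_{4}=3$), which is equivalent to $\sum_{n\geq 1}F_{2n}\,x^{n}=x/(1-3x+x^{2})$. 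Comparing coefficients gives $|E(O_{n})|=F_{2n}$.

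The main obstacle is the first step: establishing carefully that the fibres of an idempotent $f\in O_{n}$ are consecutive intervals arranged in the same order as their images. This requires using both the idempotency (so that $\operatorname{Im}(f)$ coincides with the set of fixed points) and the order-preserving property applied to elements both within a single fibre and across adjacent fibres, in order to rule out any crossings or gaps. Once this combinatorial bijection is in place, the generating function manipulation and the identification with $F_{2n}$ are routine.
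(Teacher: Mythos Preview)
Your argument is correct. The characterization of idempotents of $O_{n}$ by interval partitions with marked points is sound: order-preservation forces fibres to be intervals (if $i<k<j$ with $f(i)=f(j)=a$ then $a\leq f(k)\leq a$), the fibres inherit the order of their images, and each image point is fixed and hence lies in its own fibre. The generating function computation and the identification with $x/(1-3x+x^{2})$ are routine and accurate.

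As for comparison: the paper does not supply its own proof of this lemma. It is quoted directly from \cite[Lemma~2.9]{cat} and used as a black box (only to deduce Corollary~\ref{c:3000}). So there is no ``paper's proof'' to compare against; your proposal simply fills in what the paper outsources to the reference. If anything, your write-up could replace the bare citation, since it is short and self-contained. The only point worth tightening in a final version is the ``main obstacle'' you flag: the verification that fibres are intervals and ordered compatibly is a two-line consequence of monotonicity as sketched above, not a genuine difficulty.
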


\begin{corollary}\label{c:3000}
Let 
 $L$ be an $n$-element chain. Then 
 \emph{$| \RBO(L)|=F_{2n}$}, where $F_{2n}$ is the $2n$-th Fibonacci number.
\end{corollary}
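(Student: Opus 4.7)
The plan is to reduce the count to a known enumeration of idempotents in the order-preserving transformation monoid. By Theorem~\mref{th:300}, since $L$ is a chain we have the equality $\RBO(L)=\IEO(L)$, so it suffices to count the isotone idempotent self-maps of $L$.

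First, I would fix an order isomorphism between the $n$-element chain $L$ and the standard chain $[n]=\{1,2,\dots,n\}$. Under this identification, a map $P:L\to L$ is isotone exactly when it is an order-preserving map $[n]\to[n]$, i.e.\ an element of $O_n$, and $P$ is an isotone idempotent exactly when $P\in E(O_n)$. This gives a bijection $\IEO(L)\cong E(O_n)$, hence $|\RBO(L)|=|E(O_n)|$.

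Finally, applying Lemma~\mref{lem:224}, we have $|E(O_n)|=F_{2n}$, which yields $|\RBO(L)|=F_{2n}$.

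There is essentially no obstacle here: the content of the proof is packaged into Theorem~\mref{th:300} (which identifies $\RBO(L)$ with $\IEO(L)$ on chains) and into the cited result of \mcite{cat} (which counts idempotents of $O_n$). The only thing to check is the trivial identification of isotone self-maps of an $n$-element chain with elements of $O_n$, which is immediate from the fact that any order isomorphism $L\cong[n]$ transports isotone self-maps to order-preserving maps and preserves composition, hence idempotency.
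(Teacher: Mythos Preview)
Your proof is correct and follows essentially the same approach as the paper: both invoke Theorem~\mref{th:300} to identify $\RBO(L)$ with $\IEO(L)$, then apply Lemma~\mref{lem:224} to count the idempotents of $O_n$. Your version simply makes explicit the (trivial) identification of $\IEO(L)$ with $E(O_n)$ via an order isomorphism $L\cong[n]$, which the paper leaves implicit.
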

\begin{proof}
It follows directly from Theorem \ref{th:300} and Lemma \mref{lem:224}. 
\end{proof}

\section{Isomorphic classes of \mrb lattices} \mlabel{sec:inl}

Recall from Definition~\mref{d:31} that a \mrb lattice (of weight zero) is a lattice equipped with an \rb operator. In this section we study isomorphic \mrb lattices and classify isomorphic \mrb lattices with some common underlying lattices. 

\subsection{Isomorphic \mrb  lattices}
Noting that all axioms of \mrb lattices are equations between terms,
the class of all \mrb lattices forms a variety.
So the notions of isomorphism, subalgebra, congruence and direct product
are directly defined from the corresponding notions in universal algebra~\cite{bu}.

We now study isomorphisms of \mrb lattices before applying it to the classification of some \mrb lattices.

\begin{definition}
Two \mrb lattices  $(L, \vee, \wedge, P)$ and  $(L', \vee', \wedge', P')$ are called \textbf{isomorphic} if there is an isomorphism of lattices $f: L \to L'$ such that $fP=P'f$. When the lattice $L'$ is the same as $L$, we also say that $P$ is \textbf{ isomorphic to} $P'$. We write  $P\cong P'$ if  $P$  is isomorphic to $P'$. 
\end{definition}

It is easy to see that the relation $\cong$ is an equivalence relation on $\RBO(L)$. The corresponding equivalent classes are called \textbf{the isomorphism classes of \rb operators} on $L$. They are the isomorphism classes of \mrb ~ lattices whose underlying lattice is $L$.

Observe that the classification of all isomorphism classes of \mrb lattices is the same as the classification of all isomorphism classes of \mrb lattices or \rb operators on a given underlying lattice, as the underlying lattice runs through isomorphism classes of lattices.	

Lemma \mref{lem:30} tells us that the isomorphism class of the identity operator $\mrep_{L}$ only has one element. 

\begin{lemma}
	Let $L$ be a lattice and $P\in \RBO(L)$.
Then \emph{$P\cong \mrep_{L}$} if and only if \emph{$P= \mrep_{L}$}.	
	\mlabel{lem:30}
\end{lemma}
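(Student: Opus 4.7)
The plan is to observe that the ``if'' direction is immediate from the definitions, so only the forward implication requires argument, and even this is a one-line consequence of injectivity of a lattice isomorphism.

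First I would suppose that $P \cong \mrep_L$, which by definition means there is a lattice isomorphism $f\colon L \to L$ such that $f\circ P = \mrep_L \circ f$. Since $\mrep_L$ is the identity, this equation reads $f(P(x)) = f(x)$ for every $x \in L$. A lattice isomorphism is in particular injective, so I can cancel $f$ from both sides and conclude $P(x) = x$ for all $x \in L$, i.e., $P = \mrep_L$.

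For the converse, if $P = \mrep_L$, then taking $f = \mrep_L$ (which is trivially a lattice automorphism of $L$) gives $f\circ P = \mrep_L = P \circ f$, witnessing $P \cong \mrep_L$.

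There is essentially no obstacle here; the statement is a direct unpacking of the definition of isomorphic \mrb lattices together with the fact that lattice isomorphisms are bijective. Notably, neither the \rb axioms for $P$ nor any structural property of $L$ is needed — the same argument shows that for any operator on any set, being conjugate to the identity by a bijection forces it to equal the identity.
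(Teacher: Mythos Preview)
Your proof is correct and follows essentially the same approach as the paper: both use the intertwining relation $fP=\mrep_L f=f$ (or $Pf=f$) and cancel the bijection $f$ to conclude $P=\mrep_L$. Your closing remark that the \mrb axioms are irrelevant to the argument is accurate and worth noting.
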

\begin{proof}
Assume that $P\cong \mrep_{L}$. Then  there exists a lattice automorphism $f: L\rightarrow L$ such that
$Pf=f\mrep_{L}=f=\mrep_{L}f$, which implies that $P= \mrep_{L}$, since $f$ is bijective.	
\end{proof}

Lemma \mref{lem:00} says that the isomorphism classes of $\mathbf{0}_{L}$, $\tau$ and $\mathbf{C}_{(1)}$ only have one element when $L$ is a bounded lattice.

\begin{lemma}
Let $(L, \vee, \wedge,  0, 1)$ be a bounded lattice and \emph{$P, P'\in \RBO(L)$}.
Then the following statements hold.
\begin{enumerate}
\item If  $P\cong P'$, then for $z\in \{0, 1\}$,
$P(1)=z$ if and only if  $P'(1)=z$.
\mlabel{it:001}
\item  \emph{$P\cong \textbf{0}_{L}$} if and only if \emph{$P= \textbf{0}_{L}$}.
\mlabel{it:002}
\item  $P\cong \tau$ if and only if $P= \tau$, where $\tau$ is defined in Example \mref{exa:300} \mref{it:222}.
\mlabel{it:003}
\item  $P\cong \mathbf{C}_{(1)}$ if and only if $P=\mathbf{C}_{(1)} $, where $\mathbf{C}_{(1)}$ is the constant integral operator at value $1$
$($see Example \mref{exa:300} \mref{it:221}$)$.
\mlabel{it:004}
\end{enumerate}
\mlabel{lem:00}
\end{lemma}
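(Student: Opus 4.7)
The plan is to exploit the single observation that any lattice automorphism $f\colon L\to L$ of a bounded lattice $L$ must fix both $0$ and $1$, since these elements are order-theoretically characterized as the unique minimum and maximum. All four statements are straightforward unravellings of the definition of isomorphism $Pf=P'f$ (equivalently, $fP=P'f$ when we work with a common underlying lattice) once we use this fact; the converses in (ii)--(iv) are trivial because identity is an isomorphism.

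For part (i), suppose $P\cong P'$ via an automorphism $f$ with $fP=P'f$. Evaluating at $1$ yields $f(P(1))=P'(f(1))=P'(1)$, since $f(1)=1$. If $P(1)=0$, then $f(P(1))=f(0)=0$, so $P'(1)=0$; if $P(1)=1$, then $f(P(1))=f(1)=1$, so $P'(1)=1$. The reverse implications follow by symmetry using $f^{-1}$.

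For part (ii), if $P\cong \mathbf{0}_L$ via $f$, then $f(P(x))=\mathbf{0}_L(f(x))=0$ for every $x\in L$. Since $f(0)=0$ and $f$ is injective, this forces $P(x)=0$ for all $x$, i.e., $P=\mathbf{0}_L$. For part (iii), if $P\cong \tau$ via $f$, then $f(P(x))=\tau(f(x))$ for all $x$. Taking $x=0$, we get $f(P(0))=\tau(0)=0$, hence $P(0)=0$. For $x\neq 0$, injectivity of $f$ together with $f(0)=0$ gives $f(x)\neq 0$, so $\tau(f(x))=1$, and then $f(P(x))=1=f(1)$ yields $P(x)=1$. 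Thus $P=\tau$. Part (iv) is similar but even simpler: if $P\cong \mathbf{C}_{(1)}$ via $f$, then $f(P(x))=\mathbf{C}_{(1)}(f(x))=1=f(1)$ for every $x$, whence $P(x)=1$ for every $x$, i.e., $P=\mathbf{C}_{(1)}$.

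There is no real obstacle here: the only subtlety worth highlighting explicitly is the preservation of $0$ and $1$ by lattice automorphisms, which is what makes the three "rigid" \mrb operators $\mathbf{0}_L$, $\tau$, and $\mathbf{C}_{(1)}$ alone in their isomorphism classes. The proof should therefore be short and can be presented as a single unified argument with four brief cases, mirroring the already established Lemma~\mref{lem:30} for $\mrep_L$.
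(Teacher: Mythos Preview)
Your proof is correct and follows essentially the same approach as the paper: both arguments hinge on the fact that any lattice automorphism of a bounded lattice fixes $0$ and $1$, and then unwind the intertwining relation $fP=P'f$ (or its inverse form) case by case. The only cosmetic difference is that the paper sometimes phrases the computation as showing $f$ commutes with the target operator (e.g., verifying $f\tau=\tau=\tau f$ and then cancelling $f$), whereas you evaluate directly; the content is identical.
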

\begin{proof}
\mnoindent
\mref{it:001}
Assume that $P, P'\in \RBO(L)$
with  $P\cong P'$.  Then there exists a lattice automorphism $f: L\rightarrow L$ such that $f(P(x))=P'(f(x))$ for any $x\in L$.
If $P(1)=z\in \{0, 1\}$, then
$P'(1)=P'(f(1))=f(P(1))=f(z)=z$, since $f(1)=1$ and $f(0)=0$.
By the symmetry of $P$ and $P'$,  $P'(1)=z\in \{0, 1\}$  implies that  $P(1)=z$.

\mnoindent
\mref{it:002} Assume that $P\cong \textbf{0}_{L}$. Then  there exists a lattice automorphism $f: L\rightarrow L$ such that
$Pf=f\textbf{0}_{L}$. Since $f$ is bijective and $f(0)=0$, we have $f\textbf{0}_{L}=\textbf{0}_{L}=\textbf{0}_{L}f$, and so
$Pf=\textbf{0}_{L}f$. Thus
 $P= \textbf{0}_{L}$, since $f$ is bijective.

\mnoindent
\mref{it:003} Assume that $P\cong \tau$.  Then  there exists a lattice automorphism $f: L\rightarrow L$ such that  $Pf=f\tau$.
Since
$$f(\tau(x)) =
    \begin{cases}
      f(0)=0,  & \textrm{if}~ x= 0; \\
      f(1)=1,  & \textrm{otherwise}
    \end{cases}=\tau(x)$$ and
    $$\tau(f(x)) =
    \begin{cases}
      \tau(0)=0,  & \textrm{if}~ x= 0; \\
      1,  & \textrm{otherwise}
    \end{cases}=\tau(x)$$
 for any $x\in L$, we obtain $f\tau=\tau=\tau f$. Then 
      $Pf= \tau f$. Thus $P=\tau$, since $f$ is  bijective.

\mnoindent
\mref{it:004}   Assume that $P\cong \mathbf{C}_{(1)}$. Then  there exists a lattice automorphism $f: L\rightarrow L$ such that
$Pf=f\mathbf{C}_{(1)}$. Since $f$ is bijective and $f(1)=1$, we have $f\mathbf{C}_{(1)}=\mathbf{C}_{(1)}$, and so
$Pf=\mathbf{C}_{(1)}=\mathbf{C}_{(1)}f$. Thus
 $P= \mathbf{C}_{(1)}$, since $f$ is bijective.
\end{proof}

\subsection{Classification of \rb operators~ on finite chains}

The following lemma says that two \rb operators~ on a finite chain are isomorphic only when they are equal.

\begin{lemma}
Let $L$ be a finite chain. Then an \rb operator~ on $L$ can only be isomorphic to itself. 
\mlabel{p:11}
\end{lemma}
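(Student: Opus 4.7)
The plan is to reduce the statement to the fact that a finite chain admits no nontrivial lattice automorphism. Recall that an isomorphism between the \mrb lattices $(L, \vee, \wedge, P)$ and $(L, \vee, \wedge, P')$ is a lattice automorphism $f \colon L \to L$ satisfying $fP = P'f$. So if we can show that the automorphism group of the lattice $L$ is trivial, then every such $f$ must be $\mrep_L$, forcing $P = P'$, and hence an \rb operator on $L$ is isomorphic only to itself.

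First I would prove the auxiliary claim that the only lattice automorphism of a finite chain $L = \{x_1 < x_2 < \cdots < x_n\}$ is the identity. Any lattice automorphism $f$ is in particular an order-preserving bijection of $L$; by a short induction on $i$ (the bottom element must be fixed since it is the unique element with no element strictly below it, and then after removing it the argument repeats), one gets $f(x_i) = x_i$ for each $i$. Thus $\mathrm{Aut}(L) = \{\mrep_L\}$.

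With this in hand, suppose $P, P' \in \RBO(L)$ with $P \cong P'$. By definition there is a lattice automorphism $f$ of $L$ with $fP = P'f$. By the claim, $f = \mrep_L$, so the identity $fP = P'f$ collapses to $P = P'$. In particular, taking $P' = P$ recovers the reflexivity, and the conclusion of the lemma follows.

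I do not anticipate any real obstacle: the only content is the triviality of $\mathrm{Aut}(L)$ for a finite chain, which is elementary. The brevity of the argument also makes it consistent with the later use of this lemma in computing isomorphism classes of \rb operators on finite chains (which, combined with Corollary~\ref{c:3000}, would give the count $F_{2n}$ for an $n$-element chain).
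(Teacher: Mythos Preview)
Your proposal is correct and follows essentially the same approach as the paper: both reduce the claim to the fact that a finite chain has only the identity as a lattice automorphism, and then conclude $P=P'$ from the intertwining relation. The only cosmetic difference is that the paper invokes a textbook characterization of lattice isomorphisms as order-preserving bijections (and their inverses) to assert $f=\mrep_L$, whereas you supply the short inductive argument directly.
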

\begin{proof}
 Assume that $L$ is a finite chain and $P, P'\in \RBO(L)$.
It is clear that $P= P'$ implies $P\cong P'$.

Conversely, suppose that   $P\cong P'$. Then  there exists a lattice automorphism $f: L\rightarrow L$ such that
$Pf=fP'$.  Since $f$ is a bijection and both $f$ and $f^{-1}$ are order-preserving
(see Theorem 2.3 in \cite{bu}), we have $f=\mrep_{L}$, and so $P=Pf=fP'=P'$.
\end{proof}

\begin{remark}
	On the other hand, if $L$ is an infinite chain and $P, P'\in \RBO(L)$, then
 $P\cong P'$ does not necessarily imply  $P= P'$.

 For example,  equip the real unit interval $[0, 1]$ with the usual order $\leq$.
 Then $([0, 1], \leq)$ is a chain. Consider the constant operators $\mathbf{C}_{(\frac{1}{2})}$ and $\mathbf{C}_{(\frac{1}{4})}$ on $[0, 1]$, we have
 $\mathbf{C}_{(\frac{1}{2})}, \mathbf{C}_{(\frac{1}{4})}\in \RBO([0, 1])$ by Example \mref{exa:300}, and
 $\mathbf{C}_{(\frac{1}{2})}\neq \mathbf{C}_{(\frac{1}{4})}$, since $\mathbf{C}_{(\frac{1}{2})}(1)=\frac{1}{2}\neq
 \frac{1}{4} =\mathbf{C}_{(\frac{1}{4})}(1)$.
However $\mathbf{C}_{(\frac{1}{2})}\cong \mathbf{C}_{(\frac{1}{4})}$.
 In fact, define an operator $f: [0, 1]\rightarrow [0, 1]$  by $f(x)=x^{2}$ for any $x\in [0, 1]$. Then
 it is easy to see that $f$ is a bijection and both $f$ and $f^{-1}$ are order-preserving. So $f$ is a
 lattice isomorphism by \cite[Theorem 2.3]{bu}. Also, we have
  $f(\mathbf{C}_{(\frac{1}{2})}(x))=f( \frac{1}{2})=( \frac{1}{2})^{2}=\frac{1}{4}=\mathbf{C}_{(\frac{1}{4})}(f(x))$
 for any $x\in [0, 1]$. Thus $\mathbf{C}_{(\frac{1}{2})}\cong \mathbf{C}_{(\frac{1}{4})}$.
\end{remark}

\begin{proposition}
Let 
 $L$ be an $n$-element chain. Then there are exactly $F_{2n}$ isomorphism classes of \rb operators~  on $L$,
  where $F_{2n}$ is the $2n$-th Fibonacci number.
  \mlabel{c:3001}
\end{proposition}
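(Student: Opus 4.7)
The plan is to combine the two immediately preceding results. By Corollary~\ref{c:3000}, the total cardinality $|\RBO(L)|$ of the set of integral operators on an $n$-element chain $L$ equals $F_{2n}$. By Lemma~\ref{p:11}, two integral operators on a finite chain are isomorphic if and only if they are equal, i.e.\ every isomorphism class $\{P'\in\RBO(L) \mid P'\cong P\}$ is a singleton $\{P\}$.

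Consequently, the partition of $\RBO(L)$ into isomorphism classes has each block of size one, so the number of blocks equals the number of elements, namely $F_{2n}$. This is the whole argument: there is nothing further to verify, as the counting of $\RBO(L)$ (which internally uses Theorem~\ref{th:300} identifying $\RBO(L)$ with $\IEO(L)=E(O_n)$ and Lemma~\ref{lem:224}) and the rigidity of the isomorphism relation (which uses the fact that any order-preserving bijection of a finite chain is the identity) have already been established.

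There is no real obstacle here; the proposition is a one-line corollary of Corollary~\ref{c:3000} and Lemma~\ref{p:11}. The only subtlety worth noting explicitly in the write-up is that finiteness of $L$ is essential for the step where isomorphism collapses to equality, in contrast with the infinite chain example (e.g.\ $[0,1]$ with $\mathbf{C}_{(1/2)}\cong \mathbf{C}_{(1/4)}$) given in the preceding remark, so the hypothesis ``$n$-element'' is used twice: once for the finite count $F_{2n}$ and once for the rigidity that makes isomorphism classes trivial.
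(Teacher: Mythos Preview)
Your proposal is correct and matches the paper's own proof exactly: the paper simply states that the result ``readily follows from Corollary~\ref{c:3000} and Lemma~\ref{p:11},'' which is precisely the combination you describe.
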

\begin{proof}
It readily follows from Corollary \mref{c:3000} and Lemma \mref{p:11}.
\end{proof}

\subsection{Classification of \rb operators on diamond type lattices}

Let  $M_{n}=\{0, b_{1}, b_{2}, \cdots, b_{n-2}, 1\}$ be the diamond type lattice with Hasse diagram as follows: 

\begin{center}
\setlength{\unitlength}{0.7cm}
\begin{picture}(4,4)
\thicklines
\put(2.0,1.0){\line(-2,1){2.5}}
\put(2.0,1.0){\line(-1,1){1.2}}
\put(2.0,1.0){\line(0,1){1.2}}
\put(2.0,1.0){\line(1,1){1.2}}
\put(2.0,1.0){\line(2,1){2.5}}
\put(2.0,1.0){\line(4,1){4.6}}
\put(2.0,3.5){\line(-2,-1){2.5}}
\put(2.0,3.5){\line(-1,-1){1.2}}
\put(2.0,3.5){\line(0,-1){1.2}}
\put(2.0,3.5){\line(1,-1){1.2}}
\put(2.0,3.5){\line(2,-1){2.5}}
\put(2.0,3.5){\line(4,-1){4.6}}
\put(2.0,0.5){$0$}
\put(-1.2,2.1){$b_{1}$}
\put(0.1,2.1){$b_{2}$}
\put(1.3,2.1){$b_{3}$}
\put(2.5,2.1){$b_{4}$}
\put(3.4,2.1){$\cdots$}
\put(5.0,2.1){$\cdots$}
\put(6.9,2.1){$b_{n-2}$}
\put(2.0,3.7){$1$}
\put(1.9,0.9){$\bullet$}
\put(-0.6,2.1){$\bullet$}
\put(0.6,2.1){$\bullet$}
\put(1.9,2.1){$\bullet$}
\put(3.1,2.1){$\bullet$}
\put(4.3,2.1){$\bullet$}
\put(6.5,2.1){$\bullet$}
\put(-0.5,0.0){Diamond type lattice $M_{n}$}
\end{picture}
\end{center}
We will determine isomorphism classes of \rb operators~ on $M_{n}$.

\begin{lemma}
Let $n\geq 4$ and $P$ be an operator on the lattice $M_{n}$. If
\emph{$ \Fix_{P}(M_{n})= \{0,  1\}$},
 then \emph{$P\in \RBO(M_{n})$} if and only if $P=0^{(a)}$ for some $a\in M_{n}\backslash \{1\}$, where  $0^{(a)}$ is the \rb operator~ defined in Proposition \mref{pp:22}.
\mlabel{lem:3100}
\end{lemma}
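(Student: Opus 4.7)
The plan is to prove the two directions separately, with the forward direction essentially an immediate check and the reverse direction using the structural features of $M_n$ together with Proposition~\mref{p:300} and Corollary~\mref{c:300}.

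For the forward direction, assume $P = 0^{(a)}$ for some $a \in M_n\setminus\{1\}$. By Proposition~\mref{pp:22} we already know $0^{(a)}\in \RBO(M_n)$, so there is nothing to verify about the \rb axioms. It only remains to identify the fixed points: by construction $0^{(a)}(M_n) = \{0,1\}$, and since $a\neq 1$ one has $1\not\leq a$, so $0^{(a)}(1) = 1$ while $0^{(a)}(0) = 0$; applying Lemma~\mref{l:200} (or directly Corollary~\mref{c:300}\mref{it:31}) gives $\Fix_P(M_n)=P(M_n)=\{0,1\}$.

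For the converse, assume $P\in\RBO(M_n)$ with $\Fix_P(M_n)=\{0,1\}$. By Corollary~\mref{c:300}\mref{it:31} the image of $P$ is precisely $\{0,1\}$, so $P$ takes only the values $0$ and $1$. Set $A := P^{-1}(0)$ and $B := P^{-1}(1)$; these partition $M_n$, and since $0,1$ are fixed, $0\in A$ and $1\in B$. Because $P$ is isotone (Proposition~\mref{p:300}\mref{it:3001}), $A$ is a down-set of $M_n$.

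The crux of the argument is to show that this down-set is principal. The $\vee$-preservation axiom of Definition~\mref{d:31} forces exactly this: if two distinct atoms $b_i,b_j$ both belonged to $A$, then $b_i\vee b_j = 1$, so on one hand $P(b_i\vee b_j) = P(1) = 1$, while on the other $P(b_i)\vee P(b_j) = 0\vee 0 = 0$, a contradiction. Hence $A\cap\{b_1,\ldots,b_{n-2}\}$ contains at most one element. Consequently $A = \{0\}$, in which case $P = 0^{(0)}$, or $A = \{0,b_i\} = \{x\in M_n : x\leq b_i\}$ for a unique $i$, in which case $P = 0^{(b_i)}$. In every case $P = 0^{(a)}$ for some $a\in M_n\setminus\{1\}$, as required.

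There is no real obstacle in this argument; the only point demanding care is to use the hypothesis $n\geq 4$ implicitly, which ensures there are at least two atoms, so that the contradiction in the previous paragraph genuinely applies and the non-principal down-sets of the middle level are actually ruled out by the \rb axiom rather than vacuously absent.
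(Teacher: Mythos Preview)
Your proof is correct and follows essentially the same approach as the paper: both use Corollary~\mref{c:300} to conclude the image is $\{0,1\}$, then derive the contradiction $1=P(b_i\vee b_j)=P(b_i)\vee P(b_j)=0$ if two distinct atoms map to $0$, forcing $P=0^{(a)}$; the converse in both cases is a direct appeal to Proposition~\mref{pp:22}. Your presentation with the down-set $A=P^{-1}(0)$ and the explicit check of the fixed-point set for $0^{(a)}$ is slightly more detailed than the paper's, but the argument is the same.
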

\begin{proof}
Assume that $P$ is an operator on  $M_{n}$, and
$ \Fix_{P}(M_{n})= \{0,  1\}$. Then $P(0)=0$ and $P(1)=1$.

If $P\in \RBO(M_{n})$, then for any $i\in  \{1, 2, \cdots, n-2\}$, we have
$P(b_{i})\in P(L)=\Fix_{P}(M_{n})= \{0,  1\}$ by Corollary \ref{c:300}.
If there exist $k,  \ell\in  \{1, 2, \cdots, n-2\}$ with $k\neq \ell$ such that $P(b_{k})=P(b_{\ell})=0$, then
$1=P(1)=P(b_{k}\vee b_{\ell})=P(b_{k})\vee P(b_{\ell})=0\vee 0= 0$, a contradiction.
Thus there is at most one $k\in  \{1, 2, \cdots, n-2\}$ such that $P(b_{k})=0$, yielding that $P=0^{(a)}$ for some $a\in M_{n}\backslash \{1\}$.

Conversely, if $P=0^{(a)}$ for some $a\in M_{n}\backslash \{1\}$, then
 $P\in \RBO(M_{n})$ by  Proposition \mref{pp:22}.
\end{proof}

\begin{lemma}
Let $n\geq 4$ and $P$ be an operator on the lattice $M_{n}$. If
\emph{$ \Fix_{P}(M_{n})= \{  b_{i}, 1\}$} for some $i\in \{1, 2, \cdots, n-2\}$,
 then \emph{$P\in \RBO(M_{n})$} if and only if $P=\psi_{(b_{i})}$, where $\psi_{(b_{i})}(x)=x\vee b_{i}$ for any $x\in M_{n}$.
 \mlabel{lle:3101}
\end{lemma}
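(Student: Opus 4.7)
The plan is to handle the two directions of the biconditional separately. For the direction $P=\psi_{(b_i)}\Rightarrow P\in\RBO(M_n)$, I would invoke Proposition~\mref{pp:0001}, which requires only that the ambient lattice be modular. Since $M_n$ is the standard diamond-type lattice (distinct atoms join to $1$ and meet to $0$), it is modular, so $\psi_{(b_i)}\in\RBO(M_n)$ follows immediately. A direct check of the formula $\psi_{(b_i)}(x)=x\vee b_i$ on each element of $M_n$ also confirms that $\Fix_{\psi_{(b_i)}}(M_n)=\{b_i,1\}$, so the hypothesis on the fixed-point set is consistent with this $P$.

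For the converse, I would determine $P$ element-by-element. By Corollary~\mref{c:300}\mref{it:31}, the image $P(M_n)$ coincides with $\Fix_P(M_n)=\{b_i,1\}$, so the only possible values of $P$ are $b_i$ and $1$; and trivially $P(b_i)=b_i$, $P(1)=1$. Using isotonicity of $P$ from Proposition~\mref{p:300}\mref{it:3001}, the inequality $P(0)\leq P(b_i)=b_i$ together with $P(0)\in\{b_i,1\}$ forces $P(0)=b_i$, which already agrees with $\psi_{(b_i)}(0)=b_i$.

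The decisive step is showing $P(b_j)=1$ for every $j\neq i$. Here I would use the join-preservation axiom of an integral operator. Suppose for contradiction that $P(b_j)=b_i$ for some $j\neq i$. In $M_n$ we have $b_i\vee b_j=1$, so
\[
1\;=\;P(1)\;=\;P(b_i\vee b_j)\;=\;P(b_i)\vee P(b_j)\;=\;b_i\vee b_i\;=\;b_i,
\]
a contradiction. Hence $P(b_j)=1$ for all $j\neq i$, and comparison with $\psi_{(b_i)}(b_j)=b_j\vee b_i=1$ completes the identification $P=\psi_{(b_i)}$.

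I do not anticipate a substantive obstacle: the argument only uses the fixed-point characterization (Corollary~\mref{c:300}), isotonicity (Proposition~\mref{p:300}), and the join-preservation half of the integral operator axioms, together with the combinatorics of $M_n$ (distinct atoms join to $1$). The hypothesis $n\geq 4$ is needed only to guarantee the existence of at least one index $j\neq i$, so that the last step is not vacuous; modularity of $M_n$ is the only external structural fact invoked, and it is standard.
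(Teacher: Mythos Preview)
Your proposal is correct and follows essentially the same approach as the paper: both directions invoke Corollary~\mref{c:300} and Proposition~\mref{pp:0001} (via modularity of $M_n$) in the same way, and determine $P(0)$ by isotonicity exactly as the paper does. The only minor variation is in deducing $P(b_j)=1$ for $j\neq i$: the paper applies the identity $P(b_j)=P(b_j\vee P(b_j))$ from Proposition~\mref{p:300}\mref{it:3004}, while you use the join-preservation axiom directly via $P(1)=P(b_i\vee b_j)=P(b_i)\vee P(b_j)$; these are interchangeable one-line arguments.
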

\begin{proof}
Assume that $n\geq 4$, $P$ is an operator on the lattice $M_{n}$ and
$ \Fix_{P}(M_{n})= \{b_{i},  1\}$ for some $i\in \{1, 2, \cdots, n-2\}$. Then $P(b_{i})=b_{i}$ and 
$P(1)=1$.

If $P\in \RBO(M_{n})$, then $P(L)=\Fix_{P}(M_{n})$ by Corollary \mref{c:300}, and so
$P(0), P(b_{j})\in  \{ b_{i}, 1\}$ for any $j\in \{1, 2, \cdots, n-2\}\backslash \{i\}$.
It follows from Proposition \mref{p:300} that $P(b_{j})=P(b_{j}\vee P(b_{j}))=P(1)=1=b_{j}\vee b_{i}$. Also, since $P$ is isotone and $P(0)\in  \{ b_{i}, 1\}$, we obtain that $b_{i}\leq P(0)\leq P(b_{i})=b_{i}$, and so
$P(0)=b_{i}$. 
Thus we have shown that $P(x)=x\vee b_{i}$ for any $x\in M_{n}$, that is,  $P=\psi_{(b_{i})}$.

Conversely, if $P=\psi_{(b_{i})}$, then $P\in \RBO(M_{n})$ by Proposition \mref{pp:0001},  since $M_{n}$ is a modular lattice.
\end{proof}

\begin{lemma}
Let $n\geq 4$ and $P$ be an operator on the lattice $M_{n}$. If
\emph{$ \Fix_{P}(M_{n})= \{ 0, b_{i}, 1\}$} for some $i\in \{1, 2, \cdots, n-2\}$,
 then \emph{$P\in \RBO(M_{n})$} if and only if $P=P^{(b_{i})}$, where $P^{(b_{i})}$ is defined in Corollary \mref{pp:202}, that is,
 $$
    P^{(b_{i})}(x)=
    \begin{cases}
      0,  & \text{if}~ x=0; \\
      b_{i},  & \textrm{if}~ x=b_{i}; \\
      1,  & \textrm{otherwise}.
    \end{cases}
    $$
 \mlabel{lll:001}
\end{lemma}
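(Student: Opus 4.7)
The plan is to mirror the arguments used in the preceding Lemmas~\mref{lem:3100} and \mref{lle:3101}, exploiting the fact that in a \mrb lattice the image equals the fixed point set, so once we know $P^{2}=P$ and $\Fix_{P}(M_n)=\{0,b_i,1\}$, the values $P(b_j)$ for $j\neq i$ are forced to lie in $\{0,b_i,1\}$.

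The forward direction proceeds as follows. Assume $P\in\RBO(M_n)$ with $\Fix_P(M_n)=\{0,b_i,1\}$. By Corollary~\mref{c:300}, $P(L)=\Fix_P(M_n)=\{0,b_i,1\}$, and in particular $P(0)=0$, $P(b_i)=b_i$, and $P(1)=1$. For any $j\in\{1,\dots,n-2\}\setminus\{i\}$ we therefore have $P(b_j)\in\{0,b_i,1\}$. The key is to rule out the first two options by applying the linearity condition $P(x\vee y)=P(x)\vee P(y)$ to the pair $x=b_j$, $y=b_i$. Since $b_j$ and $b_i$ are distinct atoms of $M_n$, we have $b_j\vee b_i=1$, so
\[
1=P(1)=P(b_j\vee b_i)=P(b_j)\vee P(b_i)=P(b_j)\vee b_i.
\]
If $P(b_j)=0$ or $P(b_j)=b_i$, the right-hand side equals $b_i\neq 1$, a contradiction. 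Hence $P(b_j)=1$ for every $j\neq i$. Combining these values, $P$ coincides with $P^{(b_i)}$ as defined in Corollary~\mref{pp:202}.

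For the converse, I would simply invoke Corollary~\mref{pp:202}: since $b_i$ is an atom of the bounded lattice $M_n$, the operator $P^{(b_i)}$ lies in $\RBO(M_n)$. One also checks directly that $\Fix_{P^{(b_i)}}(M_n)=\{0,b_i,1\}$, which confirms the hypothesis of the lemma is consistent with this answer.

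The only genuine subtlety is the case analysis ruling out $P(b_j)\in\{0,b_i\}$ for $j\neq i$, and even this is immediate once one notices that $b_j\vee b_i=1$ in $M_n$ for distinct atoms; the rest is bookkeeping following the template of Lemmas~\mref{lem:3100} and \mref{lle:3101}. I do not anticipate any real obstacle, though one should be careful that the hypothesis $n\geq 4$ is used precisely to guarantee the existence of an index $j\neq i$ among the atoms so that the argument is not vacuous.
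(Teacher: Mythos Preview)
Your proposal is correct and follows essentially the same argument as the paper: both use Corollary~\mref{c:300} to conclude $P(b_j)\in\{0,b_i,1\}$ for $j\neq i$, then apply the join-linearity $P(b_i\vee b_j)=P(b_i)\vee P(b_j)$ with $b_i\vee b_j=1$ to force $P(b_j)=1$, and invoke Corollary~\mref{pp:202} for the converse. Your version is slightly more explicit in spelling out why the cases $P(b_j)\in\{0,b_i\}$ fail, but the logic is identical.
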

\begin{proof}
Assume that $n\geq 4$, $P$ is an operator on the lattice $M_{n}$, and
$ \Fix_{P}(M_{n})= \{ 0, b_{i}, 1\}$ for some $i\in \{1, 2, \cdots, n-2\}$. Then $P(0)=0, P(b_{i})=b_{i}$ and 
$P(1)=1$.

If $P\in \RBO(M_{n})$, then $P(L)=\Fix_{P}(M_{n})$ by Corollary \ref{c:300}, and so
$ P(b_{j})\in  \{0, b_{i}, 1\}$ for any $j\in \{1, 2, \cdots, n-2\}\backslash \{i\}$.
Since 
$$1=p(1)=P(b_{i}\vee b_{j})=P(b_{i})\vee P(b_{j})=b_{i}\vee P(b_{j}),$$ 
we have $P(b_{j})=1$. Thus $P=P^{(b_{i})}$.

Conversely, if $P=P^{(b_{i})}$, then since $b_{i}$ is an atom of $M_{n}$, we have $P=P^{(b_{i})}\in \RBO(M_{n})$ by Corollary \mref{pp:202}.
\end{proof}

\begin{lemma}
Let $n\geq 4$ and $k, \ell\in  \{1, 2, \cdots, n-2\}$. Then $0^{(b_{k})}\cong 0^{(b_{\ell})}$, $\psi_{(b_{k})}\cong \psi_{(b_{\ell})}$,
$P^{(b_{k})}\cong P^{(b_{\ell})}$ and $\mathbf{C}_{(b_{k})}\cong \mathbf{C}_{(b_{\ell})}$
in $M_{n}$.
\mlabel{l:3100}
\end{lemma}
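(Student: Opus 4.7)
The plan is to exhibit a single lattice automorphism of $M_n$ that simultaneously conjugates each of the four pairs of operators. Concretely, for fixed $k, \ell \in \{1, 2, \ldots, n-2\}$, define $f \colon M_n \to M_n$ by
\[ f(0) = 0, \qquad f(1) = 1, \qquad f(b_k) = b_\ell, \qquad f(b_\ell) = b_k, \qquad f(b_i) = b_i \text{ for } i \notin \{k, \ell\}. \]
First I would verify that $f$ is a lattice automorphism. Since $f$ is clearly a bijection fixing $0$ and $1$, and since in $M_n$ any two distinct atoms $b_i, b_j$ satisfy $b_i \wedge b_j = 0$ and $b_i \vee b_j = 1$, a short case check on how $\wedge$ and $\vee$ act shows that $f$ preserves both operations.

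Next, for each of the four pairs I would verify the intertwining identity $f \circ P = P' \circ f$, which by definition gives $P \cong P'$. For $\psi_{(b_k)}$ versus $\psi_{(b_\ell)}$, the identity $f(x \vee b_k) = f(x) \vee f(b_k) = f(x) \vee b_\ell$ is immediate from $f$ being a join-homomorphism with $f(b_k) = b_\ell$. For $\mathbf{C}_{(b_k)}$ versus $\mathbf{C}_{(b_\ell)}$, both sides collapse to $b_\ell$ for every $x$, so $f(b_k) = b_\ell$ suffices. For $0^{(b_k)}$ versus $0^{(b_\ell)}$, note that the principal ideal $\{x \in M_n : x \leq b_k\} = \{0, b_k\}$ is carried bijectively by $f$ onto $\{0, b_\ell\} = \{x \in M_n : x \leq b_\ell\}$, so the two step functions (taking values in $\{0,1\}$, both fixed by $f$) agree after conjugation by $f$. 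The same observation handles $P^{(b_k)}$ versus $P^{(b_\ell)}$, where one additionally checks that $f$ sends the single nontrivial value $b_k$ of $P^{(b_k)}$ to $b_\ell$, the corresponding value of $P^{(b_\ell)}$.

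There is no serious obstacle here; the statement just records that the natural symmetric-group action on the atoms of $M_n$ is realized by lattice automorphisms, and each of the four families of \rb operators is equivariant with respect to this action. The only point to be careful about is book-keeping in the case $k = \ell$, where $f$ may simply be taken to be $\mathrm{Id}_{M_n}$, so the conclusion is trivial; thus the verifications above may be assumed to be for $k \neq \ell$.
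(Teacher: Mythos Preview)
Your proposal is correct and follows essentially the same approach as the paper: both use the transposition automorphism of $M_n$ swapping $b_k$ and $b_\ell$ (and fixing all other elements) and then verify the intertwining relation $fP=P'f$ for each of the four pairs of operators. The paper carries out the verifications by explicit case computations, whereas you phrase them slightly more conceptually (principal ideals, equivariance under the symmetric-group action on atoms), but the underlying argument is identical.
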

\begin{proof}
Assume that $n\geq 4$ and $k, \ell\in  \{1, 2, \cdots, n-2\}$.   Define $f: M_{n}\rightarrow M_{n}$  by
 $$
    f(x)=
    \begin{cases}
      b_{\ell},  & \textrm{if}~ x=b_{k}; \\
      b_{k},  & \textrm{if}~ x=b_{\ell}; \\
      x,  & \textrm{otherwise}.
    \end{cases}
    $$
It is easy to verify that $f$ is a lattice isomorphism.
Since
 $$
    f(0^{(b_{k})}(x))=
    \begin{cases}
     f(0)=0,  & \textrm{if}~ x\leq b_{k}; \\
      f(1)=1,  & \textrm{otherwise}
    \end{cases}=0^{(b_{k})}(x)
    $$
and
$$
    0^{(b_{\ell})}(f(x))=
    \begin{cases}
     0^{(b_{\ell})}(0)=0,  & \textrm{if}~ x=0; \\
      0^{(b_{\ell})}(b_{\ell})=0,  & \textrm{if}~ x=b_{k}; \\
      1,  & \textrm{otherwise}
    \end{cases}\ =0^{(b_{k})}(x)
    $$
we have $f 0^{(b_{k})}=0^{(b_{k})}=0^{(b_{\ell})}f$. Thus
 $0^{(b_{k})}\cong 0^{(b_{\ell})}$.

 Since
 $$
 f(P^{(b_{k})}(x))=
 \begin{cases}
 f(0)=0,  & \textrm{if}~ x=0; \\
  f(b_{k})=b_{\ell},  & \textrm{if}~ x= b_{k}; \\
 f(1)=1,  & \textrm{otherwise}
 \end{cases}
 $$
 and
 $$
 P^{(b_{\ell})}(f(x))=
 \begin{cases}
 P^{(b_{\ell})}(0)=0,  & \textrm{if}~ x=0; \\
 P^{(b_{\ell})}(b_{\ell})=b_{\ell},  & \textrm{if}~ x=b_{k}; \\
 1,  & \textrm{otherwise}
 \end{cases}
 $$
 we have $f P^{(b_{k})}=P^{(b_{\ell})}f$, and thus
 $P^{(b_{k})}\cong P^{(b_{\ell})}$.
 
Also, for any $x\in M_{n}$, we have $$(f\psi_{(b_{k})}) (x)=f(\psi_{(b_{k})} (x))=f(x\vee b_{k})=f(x)\vee f(b_{k})=f(x)\vee b_{\ell}=\psi_{(b_{\ell})}(f(x))=(\psi_{(b_{\ell}}f))(x),$$ and so $f\psi_{(b_{k})}=\psi_{(b_{\ell})}f$. Thus $\psi_{(b_{k})}\cong \psi_{(b_{\ell})}$.

Finally, it is easy to verify that   $f\mathbf{C}_{(b_{k})}= \mathbf{C}_{(b_{\ell})}=\mathbf{C}_{(b_{\ell})} f$.
Hence  $\mathbf{C}_{(b_{k})}\cong \mathbf{C}_{(b_{\ell})}$.
\end{proof}

\begin{lemma}
Let $n\geq 4$ and let $P$ be an operator on the lattice $M_{n}$ for which \emph{$\Fix_P(M_{n})\backslash \{0,1\}$} contains at least two elements. Then $P$ is an \rb operator~ if and only if
\emph{$\{0, 1\}\subseteq \Fix_P(M_{n})$} and $P(b)=1$
for each \emph{$b\in M_{n}\backslash \Fix_P(M_{n})$}. 
		\mlabel{lem:0100}
\end{lemma}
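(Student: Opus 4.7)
The plan is to treat the two directions of the equivalence separately, with the forward direction exploiting the simple geometric structure of $M_n$ (atoms pairwise meet to $0$ and pairwise join to $1$), and the backward direction being a routine but tedious verification of the two axioms of Definition \mref{d:31}.

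For the forward direction, assume $P\in \RBO(M_n)$ and pick two distinct atoms $b_i, b_j \in \Fix_P(M_n)$. Since $b_i \vee b_j = 1$ and $b_i \wedge b_j = 0$ in $M_n$, applying the first axiom of Definition \mref{d:31} to $b_i, b_j$ immediately gives $P(1)=P(b_i)\vee P(b_j)=b_i\vee b_j=1$, and applying the second axiom gives $0=b_i\wedge b_j=P(b_i)\wedge P(b_j)=P(0)\vee P(0)=P(0)$. Thus $\{0,1\}\subseteq \Fix_P(M_n)$.

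Now let $b_k \in M_n \setminus \Fix_P(M_n)$. By Corollary \mref{c:300}, $P(b_k)\in P(M_n)=\Fix_P(M_n)\subseteq\{0,1,b_1,\ldots,b_{n-2}\}$. I would then eliminate all possibilities except $P(b_k)=1$: the case $P(b_k)=b_k$ contradicts the choice of $b_k$; the case $P(b_k)=0$ is excluded by applying the first axiom to $b_k,b_i$ to obtain $1=P(1)=P(b_k)\vee b_i$, which would force $b_i=1$; and for the case $P(b_k)=b_\ell$ with $\ell\neq k$, the key observation is that $b_\ell$, as an element of the image $P(M_n)=\Fix_P(M_n)$, is itself fixed by $P$, so the first axiom applied to $b_k\vee b_\ell=1$ yields $1=P(b_k)\vee P(b_\ell)=b_\ell\vee b_\ell=b_\ell$, a contradiction. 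Hence $P(b_k)=1$, as required.

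For the backward direction, suppose the two stated conditions hold. I would verify the two axioms of Definition \mref{d:31} by case analysis on whether each of $x,y$ lies in $\{0,1\}$ or is an atom, and for atoms whether the atom lies in $\Fix_P(M_n)$. The key simplifications are that $P(M_n)\subseteq\Fix_P(M_n)$ makes $P$ idempotent (handling all cases where a $1$ appears inside $P$), together with $b_k\vee b_\ell=1$ and $b_k\wedge b_\ell=0$ for distinct atoms, which forces both sides of each axiom to collapse to the same value in every subcase. The main obstacle is the slick elimination of $P(b_k)=b_\ell$ in the forward argument; after that is secured, the remaining verifications are mechanical.
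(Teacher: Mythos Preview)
Your proposal is correct and follows essentially the same strategy as the paper's proof: both directions proceed exactly as you outline, with a case analysis in the backward direction and the join-homomorphism axiom driving the forward direction. The only cosmetic differences are that the paper obtains $\{0,1\}\subseteq\Fix_P(M_n)$ by citing the sublattice property (Corollary~\mref{c:300}) rather than by direct computation, and it disposes of the possibility $P(b)=b_\ell$ in one stroke via Proposition~\mref{p:300}\,\mref{it:3004} (from $P(b)\neq 0$ one has $b\vee P(b)=1$, hence $P(b)=P(b\vee P(b))=P(1)=1$) instead of your separate case elimination.
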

\begin{proof}
	Suppose $\{b_{k}, b_{\ell}\}\subseteq \Fix_{P}(M_{n})$ for some $\{b_{k}, b_{\ell}\}\subseteq M_{n}\backslash \{0, 1\}$ with $b_{k}\neq b_{\ell}$.
	
	If $P\in \RBO(M_{n})$, then $\{0, 1\}\subseteq\Fix_{P}(M_{n})$, since $\Fix_{P}(L)$ is a sublattice of $L$
	by Corollary \mref{c:300}.
	Also, for each
 $b\in M_{n}\backslash \Fix_P(M_{n})$,
	we have $P(b)\in P(L)=\Fix_{P}(M_{n})$ by Corollary \mref{c:300}. If $P(b)=0$, then
	$1=P(1)=P(b\vee b_{k})=P(b)\vee P( b_{k})=P( b_{k})=b_{k}$, a contradiction. Thus
	$P(b)\in \Fix_{P}(M_{n})\backslash \{ 0\}$, and so $b\vee P(b)=1$. It follows from Proposition \mref{p:300} that
	$P(b)=P(b\vee P(b))=P(1)=1$.
	
	Conversely, suppose that $\{0, b_{k}, b_{\ell}, 1\}\subseteq \Fix_{P}(M_{n})$ for some $\{b_{k}, b_{\ell}\}\subseteq  M_{n}\backslash \{0, 1\}$ with $b_{k}\neq b_{\ell}$, and $P(b)=1$
	for each $b\in M_{n}\backslash \Fix_P(M_{n})$.	
	It is easy to see  that $P$ is isotone, and
	$P(x\vee y)=P(x)\vee P(y)$
	for all $x, y\in M_{n}$. Also, since $\Fix_{P}(L)=P(L)$, we have $P^{2}=P$ by Lemma \ref{l:200}.
	
	Next, we show that $P(x)\wedge P(y)=P(P(x)\wedge y)\vee P(x\wedge P(y))$ for all $x, y\in M_{n}$. Noticing that
	$x\leq P(x)$,
	we have $P(x)\wedge P(x)=P(x)=P(P(x)\wedge x)\vee P(x\wedge P(x))$.
	So we may assume that $x\neq y$.

	If $x=1$ or $y=1$, say $x=1$, then $P(x)\wedge P(y)=P(y)=P(y)\vee P^{2}(y)=P(P(x)\wedge y)\vee P(x\wedge P(y))$,
	since $P(1)=1$ and $P^{2}=P$.

	If $x, y\in  \Fix_{P}(M_{n})\backslash \{ 1\}$, then $x\wedge y=0$, $P(x)=x$ and $P(y)=y$. It follows that
	$P(x)\wedge P(y)=x\wedge y=0=0\vee 0=P(P(x)\wedge y)\vee P(x\wedge P(y))$, since $P(0)=0$.
	
	If $x\in  \Fix_{P}(M_{n})\backslash \{ 1\}$ and $y\in M_{n}\backslash \Fix_{P}(M_{n})$, then $x\wedge y=0$, $P(x)=x$ and $P(y)=1$, which implies that
	$P(x)\wedge P(y)=x\wedge 1=x=0\vee x=P(P(x)\wedge y)\vee P(x\wedge P(y))$, since $P(0)=0$.
	
	If $y\in  \Fix_{P}(M_{n})\backslash \{ 1\}$ and $x\in M_{n}\backslash \Fix_{P}(M_{n})$, then 
	we similarly have
	$P(x)\wedge P(y)=P(P(x)\wedge y)\vee P(x\wedge P(y))$.
	
	If $x, y\in  M_{n}\backslash \Fix_{P}(M_{n})$, then $P(x)=P(y)=1$,  and so
	$P(x)\wedge P(y)=1=P(y)\vee P(x)=P(P(x)\wedge y)\vee P(x\wedge P(y))$.
	
	To summarize, we conclude that $P\in \RBO(M_{n})$.
\end{proof}

Immediately from Lemma \mref{lem:0100}, we obtain

\begin{corollary} 
Let $n\geq 4$ and \emph{$P\in \RBO(M_{n})$}. If
\emph{$\{  b_{1}, b_{2}, \cdots, b_{n-2}\}\subseteq \Fix_{P}(M_{n})$}, then
 \emph{ $P=\mrep_{M_{n}}$}.
  \mlabel{cr:00}
\end{corollary}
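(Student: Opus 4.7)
The plan is to invoke Lemma~\mref{lem:0100} directly. Since $n\geq 4$, the assumption $\{b_{1}, b_{2}, \cdots, b_{n-2}\}\subseteq \Fix_{P}(M_{n})$ gives at least two distinct elements of $\Fix_P(M_n)$ lying in $M_n \setminus \{0,1\}$, so the hypothesis of Lemma~\mref{lem:0100} is satisfied. Applying that lemma yields $\{0,1\}\subseteq \Fix_P(M_n)$ together with the description $P(b) = 1$ for every $b \in M_n \setminus \Fix_P(M_n)$.

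Next I would observe that, combined with the hypothesis, we now have $\{0, b_{1}, b_{2}, \ldots, b_{n-2}, 1\} \subseteq \Fix_P(M_n)$. But this set is all of $M_n$, so $\Fix_P(M_n) = M_n$ and the set $M_n \setminus \Fix_P(M_n)$ is empty; the conditional clause in Lemma~\mref{lem:0100} therefore imposes no further constraint. From $\Fix_P(M_n) = M_n$ we immediately conclude $P(x) = x$ for every $x \in M_n$, i.e.\ $P = \mrep_{M_n}$.

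There is essentially no obstacle here: the whole argument is a one-line consequence of Lemma~\mref{lem:0100} once one notices that the hypothesis already forces $\Fix_P(M_n)$ to contain the full atom set of $M_n$. The only subtle point to flag is the use of $n \geq 4$, which is exactly what ensures the lemma is applicable (so that $|\Fix_P(M_n) \setminus \{0,1\}| \geq 2$); for $n = 3$ the diamond $M_3$ has only one atom on the middle level beyond what is needed, and the statement would need a different justification.
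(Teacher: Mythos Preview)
Your argument is correct and is exactly the approach the paper takes: the paper simply states that the corollary follows immediately from Lemma~\ref{lem:0100}, and your write-up spells out precisely how (the hypothesis gives at least two atoms in $\Fix_P(M_n)\setminus\{0,1\}$, the lemma then forces $\{0,1\}\subseteq\Fix_P(M_n)$, whence $\Fix_P(M_n)=M_n$).
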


\begin{lemma}
Let $n\geq 4$ and \emph{$P, P'\in \RBO(M_{n})$}. If \emph{$|\Fix_{P}(M_{n})|=|\Fix_{P'}(M_{n})|\geq 3$},
 then $P\cong P'$.
\mlabel{ll:2222}
\end{lemma}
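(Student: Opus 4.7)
The plan is to exploit the preceding lemmas \mref{lll:001} and \mref{lem:0100}, which show that an \rb operator on $M_n$ whose fixed-point set has size at least three is completely determined by that fixed-point set; once this is established, the required lattice automorphism can be built directly from a suitable permutation of the atoms.

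First, I would analyze the structure of $\Fix_P(M_n)$. By Corollary \mref{c:300}, $\Fix_P(M_n)$ is a sublattice of $M_n$. Since every element of $M_n$ other than $0$ and $1$ is an atom, and since any two distinct atoms $b_i, b_j$ satisfy $b_i\wedge b_j=0$ and $b_i\vee b_j=1$, the sublattice $\Fix_P(M_n)$ is forced to contain both $0$ and $1$ as soon as it contains at least two atoms; and if it contains only one atom, then the condition $|\Fix_P(M_n)|\geq 3$ forces $\Fix_P(M_n)=\{0,b_i,1\}$ anyway. Hence in every case one has $\Fix_P(M_n)=\{0,1\}\cup S_P$ for some $S_P\subseteq\{b_1,\ldots,b_{n-2}\}$ with $|S_P|=|\Fix_P(M_n)|-2\geq 1$. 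The analogous statement holds for $\Fix_{P'}(M_n)=\{0,1\}\cup S_{P'}$ with $|S_{P'}|=|S_P|$.

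Next, I would invoke the preceding lemmas to describe $P$ and $P'$ explicitly from their fixed-point sets. If $|S_P|=1$, say $S_P=\{b_i\}$, then by Lemma \mref{lll:001} we have $P=P^{(b_i)}$, and in particular $P$ fixes $0$, $1$, and $b_i$ and sends every remaining atom to $1$. If $|S_P|\geq 2$, then Lemma \mref{lem:0100} gives the same recipe: $P$ acts as the identity on $\Fix_P(M_n)$ and sends every atom outside $S_P$ to $1$. Thus $P$ is completely determined by the subset $S_P$ of fixed atoms, and likewise $P'$ is determined by $S_{P'}$.

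Finally, since $|S_P|=|S_{P'}|$, pick any permutation $\sigma$ of $\{1,\ldots,n-2\}$ sending the index set of $S_P$ to that of $S_{P'}$, and define $f:M_n\to M_n$ by $f(0)=0$, $f(1)=1$, $f(b_i)=b_{\sigma(i)}$. Any such permutation of the atoms extends to a lattice automorphism of $M_n$, so $f$ is a lattice isomorphism. To conclude, I would verify $fP=P'f$ by a four-case check at each element type ($0$, $1$, atoms inside $S_P$, atoms outside $S_P$), which is immediate from the explicit description of $P$ and $P'$ given above. There is no substantial obstacle here; the only delicate point is to split off the boundary case $|S_P|=1$ so that Lemma \mref{lll:001} can be invoked in place of Lemma \mref{lem:0100}, and to observe that the uniform conclusion ``$P$ is determined by $S_P$'' nevertheless holds in both regimes.
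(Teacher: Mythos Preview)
Your proposal is correct and follows essentially the same approach as the paper's proof: both split into the cases $|\Fix_P(M_n)|=3$ (handled via Lemma~\mref{lll:001}) and $|\Fix_P(M_n)|\geq 4$ (handled via Lemma~\mref{lem:0100}), and then produce an automorphism of $M_n$ permuting atoms to carry $\Fix_P(M_n)$ onto $\Fix_{P'}(M_n)$. The only minor organizational difference is that the paper cites Lemma~\mref{l:3100} for the size-$3$ case rather than constructing the automorphism directly, whereas you build the permutation uniformly for both cases; your use of Corollary~\mref{c:300} to justify $\{0,1\}\subseteq\Fix_P(M_n)$ is in fact cleaner than the paper's citation of Lemma~\mref{lem:0100} at that point.
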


\begin{proof}
Assume that $n\geq 4$ and $P, P'\in \RBO(M_{n})$.
If $|\Fix_{P}(M_{n})|=|\Fix_{P'}(M_{n})|= 3$, then by Lemma \mref{lem:0100},
$ \Fix_{P}(M_{n})= \{ 0, b_{i}, 1\}$ and $ \Fix_{P'}(M_{n})= \{ 0, b_{\ell}, 1\}$ for some $i, \ell\in \{1, 2, \cdots, n-2\}$.
It follows from Lemma \mref{lll:001} and Lemma \mref{l:3100} that $P\cong P'$.

If $|\Fix_{P}(M_{n})|=|\Fix_{P'}(M_{n})|=k+2\geq 4$, then by Lemma \mref{lem:0100},
we have 
\begin{center}
$ \Fix_{P}(M_{n})= \{ 0,  b_{i_{1}}, b_{i_{2}}, \cdots, b_{i_{k}}, 1\}$ and $ \Fix_{P'}(M_{n})= \{ 0,  b_{j_{1}}, b_{j_{2}}, \cdots, b_{j_{k}}, 1\}$,	
\end{center}
 where $1\leq i_{1}< i_{2}<\cdots <i_{k}\leq n-2$ and $1\leq j_{1}< j_{2}<\cdots <j_{k}\leq n-2$.

Let $f: M_{n}\rightarrow M_{n}$  be a bijection such that $f(0)=0, f(1)=1$ and $f(b_{i_{\ell}})=b_{j_{\ell}}$ for each 
$b_{i_{\ell}}\in \{ b_{i_{1}}, b_{i_{2}}, \cdots, b_{i_{k}}\}$. It is clear that $f$ is an automorphism of $M_{n}$. Also,
by Lemma \mref{lem:0100}, we have
$fP=P'f$.
 Thus $P\cong P'$.
\end{proof}

\begin{lemma}
Let $n\geq 4$,  $P$ an operator on the lattice $M_{n}$ and
\emph{$ \Fix_{P}(M_{n})= \{0,  b_{i}\}$} for some $i\in \{1, 2, \cdots, n-2\}$.
\begin{enumerate}
\item  If $n=4$, then \emph{$P\in \RBO(M_{n})$} if and only if $P=d_{b_{i}}$, where $d_{b_{i}}$ is the inner derivation defined by
$d_{b_{i}}(x)=x\wedge b_{i}$ for any $x\in M_{n}$.
\mlabel{it:31001}
\item  If $n\geq 5$, then \emph{$P\not\in \RBO(M_{n})$}.
\mlabel{it:31002}
\end{enumerate}
\mlabel{lem:3101}
\end{lemma}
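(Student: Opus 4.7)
The plan is to read off strong constraints from the hypothesis together with the \rb axioms, and then split on $n$. First, Corollary \mref{c:300} forces $P(M_n)=\Fix_{P}(M_n)=\{0,b_i\}$, so $P(1),P(b_j)\in\{0,b_i\}$ for every $j\neq i$. Since $P$ is isotone by Proposition \mref{p:300}\mref{it:3001} and $P(1)\geq P(b_i)=b_i$, we must have $P(1)=b_i$.

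The main step, which I expect to carry most of the work, is to pin down $P(b_j)$ for $j\neq i$. A priori $P(b_j)\in\{0,b_i\}$. Assume for contradiction that $P(b_j)=b_i$. Because $j\neq i$ we have $b_j\wedge P(b_j)=b_j\wedge b_i=0$, so the fixed-point identity Proposition \mref{p:300}\mref{it:3002} yields
\[ b_i=P(b_j)=P(b_j\wedge P(b_j))=P(0)=0, \]
a contradiction. Hence $P(b_j)=0$ for every $j\neq i$.

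With this in hand the two parts separate cleanly. For (ii), assume $n\geq 5$ so that one may pick distinct $j,k\in\{1,\ldots,n-2\}$ both different from $i$; then $b_j\vee b_k=1$, and Definition \mref{d:31}(i) gives
\[ b_i=P(1)=P(b_j\vee b_k)=P(b_j)\vee P(b_k)=0, \]
a contradiction, so no such $P$ lies in $\RBO(M_n)$. For the forward direction of (i) with $n=4$ the unique index $j=3-i$ satisfies $P(b_j)=0$, so the values of $P$ on all four elements of $M_4$ are forced and coincide with $d_{b_i}(x)=x\wedge b_i$. For the converse, observe that $M_4$ is the Boolean lattice $B_2$ and hence distributive, so Theorem \mref{pro:000} gives $d_{b_i}\in\RBO(M_4)$; a one-line check (using $b_i\wedge b_j=0$ and $1\wedge b_i=b_i$) verifies that $\Fix_{d_{b_i}}(M_4)=\{0,b_i\}$, closing (i).

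The main obstacle is spotting that Proposition \mref{p:300}\mref{it:3002} is the right lever to rule out $P(b_j)=b_i$; once that is done, both parts fall out of the linearity axiom and a direct enumeration on $M_4$, with no further case analysis needed.
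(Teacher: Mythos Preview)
Your argument is correct and follows essentially the same route as the paper: assume $P\in\RBO(M_n)$, use Corollary~\mref{c:300} and isotonicity to get $P(1)=b_i$, use Proposition~\mref{p:300}\mref{it:3002} to force $P(b_j)=0$ for $j\neq i$, then split on $n$; the paper phrases \mref{it:31002} as ``$P$ must equal $d_{b_i}$, but $d_{b_i}\notin\RBO(M_n)$ when $n\geq5$'' while you derive the contradiction directly, and for the converse in \mref{it:31001} the paper cites Proposition~\mref{the:000} together with Corollary~\mref{th:000} where you cite Theorem~\mref{pro:000}, but these are equivalent. One cosmetic point: you should state at the outset that you are assuming $P\in\RBO(M_n)$ before invoking Corollary~\mref{c:300} and Proposition~\mref{p:300}, since those results require it.
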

\begin{proof}
Suppose that the assumption in the lemma is fulfilled.
Then $P(0)=0$ and $P(b_{i})=b_{i}$.

If $P\in \RBO(M_{n})$, then $P(L)=\Fix_{P}(M_{n})$ by Corollary \ref{c:300}, and so
$P(1), P(b_{j})\in  \{0,  b_{i}\}$ for any $j\in \{1, 2, \cdots, n-2\}\backslash \{i\}$.
Thus $P(1)=b_{i}$ since $P$ is isotone. Also, we have by Proposition \mref{p:300} that
 $P(b_{j})=P(b_{j}\wedge P(b_{j}))=P(0)=0$.
 This shows that $P(x)=x\wedge b_{i}$ for any $x\in M_{n}$, that is, $P=d_{b_{i}}$.

Conversely, if $n=4$ and $P=d_{b_{i}}$, then  $P\in \IDO(L)\subseteq\RBO(L)$ by Proposition \mref{the:000} and Corollary \mref{th:000},
 since $M_{4}$ is a distributive lattice. Thus
 \mref{it:31001} holds.

If $n\geq 5$ and $P=d_{b_{i}}$, then for any $b_{k}, b_{\ell}\in M_{n}\backslash \{0, b_{i}, 1\}$ with $b_{k}\neq b_{\ell}$,
we have $P(b_{k}\vee b_{\ell})=P(1)=b_{i}\neq 0=P(b_{k})\vee P(b_{\ell})$, and so
  $P\not\in \RBO(M_{n})$.
 Thus  \mref{it:31002} holds.
\end{proof}

Here is our classification of isomorphism classes of \rb operators~ on $M_{n}$. 

\begin{theorem}
	\begin{enumerate}
		\item  \emph{$|\RBO(M_{3})|=8$} and there are exactly $8$ isomorphism classes of \rb operators~  on $M_{3}$.
		\mlabel{it:100}	
		\item 
\emph{ $|\RBO(M_{4})|=14$} and there are exactly $9$ isomorphism classes of \rb operators~  on $M_{4}$.	
\mlabel{it:101}	
\item	
Let $n\geq 5$. Then \emph{$|\RBO(M_{n})|=2^{n-2}+3n-4$} and there are exactly $n+4$ isomorphism classes of \rb operators~  on $M_{n}$.
\mlabel{it:102}
\end{enumerate}
\mlabel{te:111}
\end{theorem}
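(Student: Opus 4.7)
The plan is to stratify $\RBO(M_n)$ by the fixed-point set $\Fix_P(M_n)$, which by Corollary~\mref{c:300} is a sublattice of $M_n$, and then apply the case-by-case characterizations in Lemmas~\mref{lem:3100}--\mref{lem:3101} together with the isomorphism information from Lemmas~\mref{l:3100}, \mref{ll:2222} and Lemma~\mref{lem:00}. For part~\mref{it:100} I would dispose of $M_3$ as a special case: since $M_3=\{0,b_1,1\}$ is the $3$-element chain, Corollary~\mref{c:3000} gives $|\RBO(M_3)|=F_6=8$, and Lemma~\mref{p:11} shows each operator is alone in its isomorphism class, yielding the $8$ classes.

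For $n\geq 4$ (parts~\mref{it:101} and \mref{it:102}) I would enumerate the sublattices of $M_n$ and tally operators family by family. Singletons $\{0\},\{1\},\{b_i\}$ correspond to the constant operators, contributing $n$ operators in $3$ iso classes: Lemma~\mref{l:3100} merges all $\mathbf{C}_{(b_i)}$, while Lemma~\mref{lem:00} isolates $\mathbf{0}_L$ and $\mathbf{C}_{(1)}$. The sublattice $\{0,1\}$ contributes the $n-1$ operators $0^{(a)}$ of Lemma~\mref{lem:3100} (for $a\in M_n\setminus\{1\}$) in $2$ classes: the $n-2$ operators $0^{(b_i)}$ are mutually isomorphic by Lemma~\mref{l:3100}, while $\tau=0^{(0)}$ stands alone by Lemma~\mref{lem:00}. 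The sublattices $\{b_i,1\}$ give the $n-2$ operators $\psi_{(b_i)}$ of Lemma~\mref{lle:3101}, in $1$ class by Lemma~\mref{l:3100}; the sublattices $\{0,b_i,1\}$ give the $n-2$ operators $P^{(b_i)}$ of Lemma~\mref{lll:001}, again in $1$ class by Lemma~\mref{l:3100}. Finally, for each $k\in\{2,\ldots,n-2\}$, the $\binom{n-2}{k}$ sublattices of size $k+2$ each yield exactly one RB operator by Lemma~\mref{lem:0100} (with $k=n-2$ giving $\mrep_{M_n}$ via Corollary~\mref{cr:00}), and these are collapsed into one isomorphism class per size by Lemma~\mref{ll:2222}.

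The remaining family $\Fix_P(M_n)=\{0,b_i\}$ is controlled by Lemma~\mref{lem:3101}: it is empty when $n\geq 5$, which is precisely where the case split between parts~\mref{it:101} and~\mref{it:102} occurs; for $n=4$ it contributes the two inner derivations $d_{b_1},d_{b_2}$, which are isomorphic via the lattice automorphism of $M_4$ transposing $b_1$ and $b_2$, giving one extra isomorphism class. Summing the contributions gives
\[
|\RBO(M_n)| \;=\; n+(n-1)+(n-2)+(n-2)+\sum_{k=2}^{n-2}\binom{n-2}{k} \;=\; 2^{n-2}+3n-4 \qquad (n\geq 5),
\]
with $3+2+1+1+(n-3)=n+4$ iso classes (the $n-3$ accounting for the sizes $k+2=4,5,\ldots,n$), while for $n=4$ the extra inner-derivation contributions produce the stated $14$ operators in $9$ classes. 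The main obstacle I anticipate is the careful bookkeeping to ensure that each RB operator is counted exactly once; the one step going beyond the quoted lemmas is the verification that $d_{b_1}\cong d_{b_2}$ in $M_4$, which I would handle by a permutation-of-atoms automorphism exactly parallel to the proof of Lemma~\mref{l:3100}. A minor but essential bookkeeping point is that any lattice automorphism $f$ witnessing $fP=P'f$ satisfies $f(\Fix_P(M_n))=\Fix_{P'}(M_n)$, so isomorphism preserves the cardinality of the fixed-point set and no classes merge across distinct sizes.
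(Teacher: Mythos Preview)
Your proposal is correct and follows essentially the same route as the paper: both proofs stratify $\RBO(M_n)$ by the fixed-point sublattice $\Fix_P(M_n)$, invoke Lemmas~\mref{lem:3100}, \mref{lle:3101}, \mref{lll:001}, \mref{lem:0100}, \mref{lem:3101} to enumerate the operators in each stratum, and use Lemmas~\mref{lem:00}, \mref{l:3100}, \mref{ll:2222} (plus the transposition automorphism for $d_{b_1}\cong d_{b_2}$ in $M_4$) to collapse isomorphism classes. The only cosmetic differences are that the paper organizes the strata purely by $|\Fix_P(M_n)|$ (treating sizes $4,\ldots,n-1$ and $n$ separately), whereas you subdivide size $2$ by sublattice shape and fold $\mrep_{M_n}$ into the $k=n-2$ term; the arithmetic and the non-merging argument via $f(\Fix_P)=\Fix_{P'}$ are the same.
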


\begin{proof}
It follows from Corollary \mref{c:3000} and Proposition \mref{c:3001} that \mref{it:100} holds.

Let $n\geq 4$ and $P\in \RBO(M_{n})$. Consider the following cases.

\textbf{Case} $(1)$: $|\Fix_{P}(M_{n})|=1$.
In this case, $P$ is equal to one of the following constant operators:
$\mathbf{0}_{M_{n}}$, $\mathbf{C}_{(1)}$ and $\mathbf{C}_{(b_{1})}$, $i\in \{1, 2, \cdots, n-2\}$.
Also, for any $i, j\in  \{1, 2, \cdots, n-2\}$, we have $ \mathbf{C}_{(b_{1})}\cong  \mathbf{C}_{(b_{j})}$ by Lemma \mref{l:3100}.
Thus, in this case, $P$ has $n$ choices, and  by  Lemma \ref{lem:00}, $P$ has $3$ isomorphism classes:
$ \mathbf{0}_{M_{n}}$, $\mathbf{C}_{(1)}, \mathbf{C}_{(b_{1})}$.

\textbf{Case} $(2)$: $|\Fix_{P}(M_{n})|=2$. 
In this case, by Lemmas~\mref{lem:3100}, \mref{lle:3101},
\mref{lem:0100} and \mref{lem:3101}, we have
$ \Fix_{P}(M_{n})= \{0,  1\}$ or  $\{  b_{i}, 1\}$ for some $i\in \{1, 2, \cdots, n-2\}$ if $n \geq 5$; and
$\Fix_{P}(M_{n})= \{0,  1\}$,  $ \{0,  b_{i}\}$ or $\{b_{i}, 1\}$ for some $i\in \{1, 2, \cdots, n-2\}$ if $n=4$.

If $ \Fix_{P}(M_{n})= \{0,  1\}$,
then by Lema \mref{lem:3100},
$P=0^{(a)}$ for some
$a\in M_{n}\backslash \{1\}$. Also, we have by Lemma \mref{l:3100} that $0^{(b_{k})}\cong 0^{(b_{\ell})}$
for any  $k, \ell\in  \{1, 2, \cdots, n-2\}$.

If $ \Fix_{P}(M_{n})= \{  b_{i}, 1\}$ for some $i\in \{1, 2, \cdots, n-2\}$, then
$P=\psi_{(b_{i})}$ by Lemma \mref{lle:3101} , where $\psi_{(b_{i})}(x)=x\vee b_{i}$ for any $x\in M_{n}$.
Also, we have by Lemma \mref{l:3100} that $\psi_{(b_{i})}\cong \psi_{(b_{j})}$ for any  $i, j\in  \{1, 2, \cdots, n-2\}$.

Thus, when $n\geq 5$, $P$ has $(n-1)+(n-2)=2n-3$ choices, and $P$ has $3$ isomorphism classes: $0^{(0)}(=\tau), 0^{(b_{1})}$
and $\psi_{(b_{1})}$, by  Lemma~\mref{lem:00}.

When $n=4$, if $\Fix_{P}(M_{4})=\{0, b_{i}\}$, where $i=1$ or $2$, then by Lemma \mref{lem:3101},
$P$ is equal to $d_{b_{i}}$.
Also, we have $d_{b_{1}}\cong d_{b_{2}}$, since $f=\left( \begin{matrix}
0 & b_{1}  & b_{2} & 1 \\
0 &   b_{2}    & b_{1}     &  1
\end{matrix}
\right)$ is an isomorphism from $(M_{4}, \vee, \wedge, d_{b_{1}}, 0, 1)$ to $(M_{4}, \vee, \wedge, d_{b_{2}}, 0, 1)$.
Thus,  $P$ has $(4-1)+(4-2)+2=7$ choices, and $P$ has $4$ isomorphism classes: $0^{(0)}(=\tau), 0^{(b_{1})}$, $\psi_{(b_{1})}$
and $d_{b_{1}}$  by  Lemma \mref{lem:00}.

\textbf{Case} $(3)$: $|\Fix_{P}(M_{n})|=3$. 

In this case, we have by
Lemma \mref{lem:0100} that
$ \Fix_{P}(M_{n})= \{ 0, b_{i}, 1\}$ for some $i\in \{1, 2, \cdots, n-2\}$,
and so by Lemma \mref{lll:001}, $P=P^{(b_{i})}$.
Thus $P$ has $n-2$ choices, and $P$ has only $1$ isomorphism class by Lemma \ref{l:3100}.

\textbf{Case} $(4)$: $4 \leq|\Fix_{P}(M_{n})|=t\leq n-1$. 

In this case, by
Lemma \ref{lem:0100}, there exist  $1\leq j_{1}<j_{2}<\cdots< j_{k}\leq n-2$ (where $k=t-2$) such that
$\Fix_{P}(M_{n})= \{0,  b_{j_{1}}, b_{j_{2}}, \cdots, b_{j_{k}}, 1\}$
and
$P(b_{i})=1$ for each $b_{i}\in \{b_{1}, b_{2}, \cdots, b_{n-2}\}\backslash\{ b_{j_{1}}, b_{j_{2}}, \cdots, b_{j_{k}}\}$.

Thus in this case, $P$ has $C_{n-2}^{2}+C_{n-2}^{3}+\cdots +C_{n-2}^{n-3}=2^{n-2}-n$ choices, and $P$ has $n-4$ isomorphism classes by
 Lemma \mref{ll:2222}.

\textbf{Case} $(5)$: $|\Fix_{P}(M_{n})|=n$. Then $P=\mrep_{M_{n}}$.

Summarizing the above arguments,
when $n=4$,  we obtain that  there are exactly $4+7+2+1=14$ \rb operators~ on $M_{4}$, and
 there are exactly $3+4+1+1=9$ isomorphism classes of \rb operators~
on $M_{4}$, that is, \mref{it:101} holds.

When $n\geq 5$, we obtain that $|\RBO(M_{n})|=n+(2n-3)+(n-2)+(2^{n-2}-n)+1=2^{n-2}+3n-4$, and there are exactly
$3+3+1+(n-4)+1=n+4$ isomorphism classes of \rb operators~  on $M_{n}$, that is,  \mref{it:102} holds.	
\end{proof}

\section{Derived structures from differential lattices and \mrb lattices}
\mlabel{ss:extra}

The following concepts and results were motivated from studies in hydrodynamics and Lie algebras.

\begin{definition}[Balinsky-Novikov \mcite{BN}, I. Gelfand-Dorfman \mcite{GD1}]
	An algebra $(A,\lnvkv)$, that is, a vector space $A$ with a bilinear binary operation $\lnvkv$, is called  a (left) {\bf Novikov algebra} if
	$$(a,b,c)=(b,a,c),\quad (a\lnvkv b)\lnvkv c=(a\lnvkv c)\lnvkv b\quad \tforall a,b,c\in A.$$
Here $(a,b,c)$ is the associator:
$$ (a,b,c):=(a\lnvkv b)\lnvkv c-a\lnvkv(b\lnvkv c).$$
\end{definition}

\begin{lemma}[S. Gelfand \mcite{GD1}]
	Let $A$ be a commutative associative algebra with a derivation $d$. Define a new operation $\lnvkv $ on $A$ by
	\begin{eqnarray}
		a\lnvkv b:= a d(b)\quad \tforall a, b\in A.
	\end{eqnarray}
	Then $(A,\lnvkv)$ is a left Novikov algebra.
\end{lemma}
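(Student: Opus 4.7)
The plan is to verify the two Novikov axioms directly by expanding both sides using only the Leibniz rule for $d$ and the commutativity and associativity of the multiplication in $A$. Since $a\lnvkv b = a\,d(b)$, I would first compute
\[ a\lnvkv(b\lnvkv c) = a\,d(b\,d(c)) = a\,d(b)\,d(c) + a\,b\,d^{2}(c), \]
by the Leibniz rule, while $(a\lnvkv b)\lnvkv c = (a\,d(b))\,d(c) = a\,d(b)\,d(c)$. Subtracting, the associator collapses to the clean expression
\[ (a,b,c) = -\,a\,b\,d^{2}(c). \]

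From here the left-symmetry axiom $(a,b,c)=(b,a,c)$ is immediate: the right-hand side $-a\,b\,d^{2}(c)$ is symmetric in $a$ and $b$ by commutativity of $A$, so the same computation with $a$ and $b$ interchanged yields $-b\,a\,d^{2}(c) = -a\,b\,d^{2}(c)$.

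For the right-commutativity axiom $(a\lnvkv b)\lnvkv c = (a\lnvkv c)\lnvkv b$, I would simply note that both sides expand, respectively, to $a\,d(b)\,d(c)$ and $a\,d(c)\,d(b)$, and these agree by the commutativity of the associative product.

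There is essentially no obstacle here; the lemma is a pure manipulation of the Leibniz rule combined with commutativity, and the key observation is the clean formula $(a,b,c) = -a\,b\,d^{2}(c)$, from which both Novikov axioms fall out at once. The only thing to be careful about is keeping track of which factor $d$ acts on when applying the Leibniz rule.
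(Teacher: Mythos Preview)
Your proof is correct: the Leibniz expansion gives $(a,b,c)=-ab\,d^{2}(c)$, which is manifestly symmetric in $a$ and $b$, and the right-commutativity axiom follows immediately from $(a\lnvkv b)\lnvkv c=a\,d(b)\,d(c)$ and commutativity of $A$. The paper does not supply a proof of this lemma---it is simply cited from \cite{GD1}---so there is nothing to compare against; your direct verification is the standard one.
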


We generalize the notion of Novikov algebras to be defined for semirings. 

\begin{definition}
A triple $(L,\vee,\lnvkv)$ is called a (left) {\bf  Novikov semiring} if the binary operation $\vee$ is commutative and associative, the binary operation $\lnvkv$  distributes over $\vee$ and
\begin{equation} ((x\lnvkv y)\lnvkv z) \vee ((y\lnvkv x)\lnvkv z) =
(x\lnvkv (y\lnvkv z))\vee (y\lnvkv (x\lnvkv z)),
\mlabel{eq:lnvkv1}
\end{equation}
\begin{equation}
 (x\lnvkv y)\lnvkv z=(x\lnvkv z)\lnvkv y.
\mlabel{eq:lnvkv2}
\end{equation}
\end{definition}

It is directly checked that every distributive lattice $(L, \vee, \wedge)$ is a Novikov semiring. So there are plenty of examples of Novikov semirings. 

\begin{proposition}
Let $L$ be a distributive lattice,  and
\emph{$d\in \IDO(L)$}.
Define
 	\begin{eqnarray}
x\lnvkv y:=d(x)\wedge y\quad \tforall x, y\in L.
 \end{eqnarray}
 Then $(L,\vee,\lnvkv)$ is a left Novikov semiring.
Moreover,  if $L$ has top element $1$, then
$d$ is a homomorphism of Novikov semirings from $(L,\vee,\wedge)$ to $(L,\vee,\lnvkv)$.
\mlabel{pro:887}
\end{proposition}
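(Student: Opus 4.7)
My plan is to unwind both equations \meqref{eq:lnvkv1} and \meqref{eq:lnvkv2} directly in terms of $\wedge$ and $d$, and then reduce everything to the identity $d(d(x)\wedge y)=d(x)\wedge d(y)$, which follows from $d$ being an isotone derivation.

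First I would handle the easy parts. Commutativity and associativity of $\vee$ are automatic from the lattice structure. For left distributivity of $\lnvkv$ over $\vee$, expand $x\lnvkv(y\vee z)=d(x)\wedge(y\vee z)$ and use the distributivity of $L$. For right distributivity, expand $(x\vee y)\lnvkv z=d(x\vee y)\wedge z$; here I need $d(x\vee y)=d(x)\vee d(y)$, which is exactly the linearity supplied by Proposition \mref{the:000}\,\mref{it:10005} since $L$ is distributive and $d\in\IDO(L)$. Then distributivity of $\wedge$ over $\vee$ in $L$ finishes the job.

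Next I would tackle the two Novikov axioms. The key computational observation is that for any $u,v\in L$,
\[ d(d(u)\wedge v)\;=\;d(d(u))\wedge d(v)\;=\;d(u)\wedge d(v), \]
using Lemma \mref{l:100} (isotone derivations commute with $\wedge$) together with idempotency of $d$ from Lemma \mref{pro:201}\,\mref{it:2015}. Applying this to \meqref{eq:lnvkv2} gives $(x\lnvkv y)\lnvkv z=d(x)\wedge d(y)\wedge z$ and $(x\lnvkv z)\lnvkv y=d(x)\wedge d(z)\wedge y$, which coincide by commutativity of $\wedge$. For \meqref{eq:lnvkv1} the same expansion yields both sides equal to $d(x)\wedge d(y)\wedge z$, so both associator-type expressions are in fact symmetric in $x,y$ and the identity holds trivially.

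For the last assertion, when $L$ has top $1$ I would verify that $d$ preserves both operations from $(L,\vee,\wedge)$ to $(L,\vee,\lnvkv)$. Preservation of $\vee$ is again the linearity of Proposition \mref{the:000}. For the other operation,
\[ d(x)\lnvkv d(y)\;=\;d(d(x))\wedge d(y)\;=\;d(x)\wedge d(y)\;=\;d(x\wedge y), \]
where the first equality is the definition of $\lnvkv$, the second is idempotency, and the third is Lemma \mref{l:100}. No serious obstacle is anticipated; essentially everything reduces, through Proposition \mref{the:000} and Lemma \mref{l:100}, to the identity $d(d(u)\wedge v)=d(u)\wedge d(v)$, and the only structural input actually used is that $L$ is distributive (for the linearity of $d$ and for right-distributivity of $\lnvkv$ over $\vee$).
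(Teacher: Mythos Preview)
Your argument has two gaps, both easy to repair but genuine as written.

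First, for right distributivity of $\lnvkv$ over $\vee$ you invoke Proposition~\mref{the:000}\,\mref{it:10005} to get $d(x\vee y)=d(x)\vee d(y)$. That proposition is stated only for lattices with a top element $1$, whereas the first part of the claim does not assume one. The paper avoids this entirely: using Proposition~\mref{t:000} it observes that $x\lnvkv y=d(x)\wedge y=x\wedge d(y)=y\lnvkv x$, so $\lnvkv$ is commutative and right distributivity follows from the left distributivity you already established. (Linearity of $d$ is in fact true on distributive lattices without a top, via the meet-translation identity and distributivity, but that is not what Proposition~\mref{the:000} says.)

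Second, and more seriously, your verification of Eq.~\meqref{eq:lnvkv2} does not go through as stated. From $d(d(u)\wedge v)=d(u)\wedge d(v)$ you get $(x\lnvkv y)\lnvkv z=d(x)\wedge d(y)\wedge z$ and $(x\lnvkv z)\lnvkv y=d(x)\wedge d(z)\wedge y$, and you assert these coincide ``by commutativity of $\wedge$''. But commutativity of $\wedge$ does not turn $d(y)\wedge z$ into $d(z)\wedge y$; what you actually need is the meet-translation identity $d(y)\wedge z=y\wedge d(z)$ from Proposition~\mref{t:000}. The paper sidesteps this by using the sharper fact $d(d(x)\wedge y)=d(x)\wedge y$ (Lemma~\mref{pro:201}\,\mref{it:2013}, since $d(x)\wedge y\leq d(x)$), which yields $(x\lnvkv y)\lnvkv z=d(x)\wedge y\wedge z$ and $(x\lnvkv z)\lnvkv y=d(x)\wedge z\wedge y$; now commutativity of $\wedge$ alone suffices. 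Your treatment of Eq.~\meqref{eq:lnvkv1} and of the homomorphism statement is fine.
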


\begin{proof}
Assume that $L$ is a distributive,  and $d\in \IDO(L)$. Let $x, y, z\in L$. 
By Proposition \mref{t:000},  we have
$x\lnvkv y=d(x)\wedge y=x\wedge d(y)=y\lnvkv x$, and so 
$\lnvkv$  distributes over $\vee$.

Abbreviate $ xy = x\wedge y$ for now. Since $d(x)y\leq d(x)$, we have by Lemma \mref{pro:201} \mref{it:2013} that
$d(d(x)y)=d(x)y,$
and so
$$ d(d(x)y)z = d(x)yz, \quad d(d(x)z)y=(d(x)z)y=d(x)yz.$$
Hence Eq.~\meqref{eq:lnvkv2} holds.
Further, since $L$ is distributive, we have
$$ ((x\lnvkv y)\lnvkv z) \vee ((y\lnvkv x)\lnvkv z)
= d(d(x)y)z \vee d(d(y)x)z = d(x)yz\vee d(y)xz = d(xy)z$$
and
$$(x\lnvkv (y\lnvkv z))\vee (y\lnvkv (x\lnvkv z))
= d(x)d(y)z.$$
Since $d$ is isotone, it follows from Lemma~\mref{l:100} that Eq.~\meqref{eq:lnvkv1}
holds.
Therefore, $(L,\vee,\lnvkv)$ is a Novikov semiring.

Finally, if $L$ has top element $1$, then for any $x, y\in L$, we have $d(x\vee y)=d(x)\vee d(y)$ by Proposition ~\mref{the:000}, and
$$d(x\wedge y)=d(x)\wedge d(y)=d^{2}(x)\wedge d(y)=d(x)\lnvkv d(y)$$ 
by Lemma~\mref{l:100}.
Consequently, $d$ is a  homomorphism from $(L,\vee,\wedge)$ to $(L,\vee,\lnvkv)$.
\end{proof}

Recall that an associative \name{semiring} is a triple $(A,+, \cdot)$ in which $(A, +)$ is an associative commutative semigroup, $(A, \cdot)$ is an associative semigroup and $\cdot$ is distributive over $+$ from both sides.

\begin{proposition}
Let $L$ be a distributive lattice, and \emph{$P\in \RBO(L)$}. Define
	\begin{eqnarray}
 x\ast_P y:=(x\wedge P(y))\vee (P(x)\wedge y)\quad \tforall x, y\in L.
\end{eqnarray}
Then the following statements hold.
\begin{enumerate}
  \item $P(x\ast_P y)=P(x)\wedge P(y)$ for all $x, y\in L$.
  \mlabel{it:7001}
  \item  $(L, \vee, \ast_P)$ is an associative semiring.
    \mlabel{it:7002}
  \item $P$ is a homomorphism of associative semirings from
$(L,\vee,\ast_{P})$ to $(L,\vee,\wedge)$.
  \mlabel{it:7003}
\end{enumerate}
\end{proposition}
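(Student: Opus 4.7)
The plan is to prove the three parts in order, with part (i) doing most of the work and parts (ii) and (iii) following by routine computations that exploit the distributivity of $L$.

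For part (i), I would just expand: since $P$ is an \rb operator, it satisfies $P(x\vee y)=P(x)\vee P(y)$, so
\[ P(x\ast_P y) = P\big((x\wedge P(y))\vee (P(x)\wedge y)\big) = P(x\wedge P(y))\vee P(P(x)\wedge y), \]
and the Rota-Baxter axiom (ii) of Definition~\mref{d:31} identifies the right hand side with $P(x)\wedge P(y)$.

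For part (ii), commutativity and associativity of $\vee$ are assumed. Two-sided distributivity of $\ast_P$ over $\vee$ reduces, via the isotonicity and additivity of $P$, to the distributivity of $\wedge$ over $\vee$ in $L$: for example,
\[ x\ast_P(y\vee z) = \big(x\wedge (P(y)\vee P(z))\big)\vee \big(P(x)\wedge (y\vee z)\big) \]
expands by distributivity of $L$ into $(x\ast_P y)\vee (x\ast_P z)$, and the left version is symmetric. For associativity of $\ast_P$, I would compute both sides using part~(i) to evaluate the inner $P$-applications:
\begin{align*}
(x\ast_P y)\ast_P z &= \big((x\ast_P y)\wedge P(z)\big) \vee \big(P(x\ast_P y)\wedge z\big) \\
&= \big((x\wedge P(y))\wedge P(z)\big) \vee \big((P(x)\wedge y)\wedge P(z)\big) \vee \big((P(x)\wedge P(y))\wedge z\big),
\end{align*}
while
\[ x\ast_P(y\ast_P z) = \big(x\wedge (P(y)\wedge P(z))\big) \vee \big(P(x)\wedge (y\wedge P(z))\big)\vee \big(P(x)\wedge (P(y)\wedge z)\big), \]
using part~(i) again and the distributivity of $L$ in the second term. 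Both expressions collapse to the same three-term join, giving associativity.

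For part (iii), it is immediate from Definition~\mref{d:31}(i) that $P(x\vee y)=P(x)\vee P(y)$, and part (i) gives $P(x\ast_P y)=P(x)\wedge P(y)$, so $P$ intertwines the two semiring structures. The main potential obstacle is just keeping the bookkeeping straight in the associativity computation, but nothing there depends on more than the distributivity of $L$, the linearity of $P$ on joins, and part (i); once those are in hand, each step is forced.
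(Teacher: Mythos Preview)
Your proposal is correct and follows essentially the same approach as the paper: part~(i) is an immediate consequence of the two defining axioms of an \rb operator, part~(ii) establishes associativity by expanding both $(x\ast_P y)\ast_P z$ and $x\ast_P(y\ast_P z)$ to the same three-term join $(x\wedge P(y)\wedge P(z))\vee(P(x)\wedge y\wedge P(z))\vee(P(x)\wedge P(y)\wedge z)$ using distributivity and part~(i), and part~(iii) is immediate from part~(i) and $P(x\vee y)=P(x)\vee P(y)$. The only minor difference is that the paper also notes the commutativity $x\ast_P y=y\ast_P x$ explicitly (which gives left distributivity for free), whereas you allude to this as ``the left version is symmetric''; either way the argument goes through.
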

\begin{proof}
Assume that  $L$ is a distributive lattice, and $P\in \RBO(L)$.
 Let $x, y, z\in L$.

\mnoindent
\mref{it:7001} We have $P(x\ast_P y)=P((x\wedge P(y))\vee (P(x)\wedge y))=P(x)\wedge P(y)$ by Definition \mref{d:31}.

\mnoindent
\mref{it:7002}
Since $L$ is distributive, we have by \mref{it:7001} that
\begin{eqnarray*}
(x\ast_{P} y)\ast_{P} z&=& (P(x\ast_{P} y)\wedge z)\vee ((x\ast_{P} y)\wedge P(z))\\
            &=& (P(x)\wedge P(y)\wedge z)\vee \Big(((x\wedge P(y))\vee (P(x)\wedge y))\wedge P(z)\Big)\\
            &=& (P(x)\wedge P(y)\wedge z)\vee (x\wedge P(y)\wedge P(z))\vee (P(x)\wedge y\wedge P(z))\\
             &=& \Big(P(x)\wedge ((P(y)\wedge z)\vee (y\wedge P(z)))\Big)\vee (x\wedge P(y)\wedge P(z))\\
             &=& (P(x)\wedge (y\ast_{P} z))\vee (x\wedge P(y\ast_{P} z))\\
            &=& x\ast_{P} (y\ast_{P} z).    
\end{eqnarray*}
Also, we have $x\ast_{P} y=y\ast_{P}x$ and
\begin{eqnarray*}
x\ast_{P} (y\vee z)&=& (P(x)\wedge (y\vee z))\vee (x\wedge P(y\vee z))\\
            &=&(P(x)\wedge y)\vee (P(x)\wedge z)\vee (x\wedge P(y))\vee (x\wedge P(z)) \\
            &=& \Big((P(x)\wedge y)\vee (x\wedge P(y))\Big)\vee \Big((P(x)\wedge z)\vee (x\wedge P(z))\Big)\\
            &=& (x\ast _{P} y)\vee (x\ast_{P} z).
\end{eqnarray*}
Thus $(L, \vee, \ast_P)$ is an associative semiring.

\mnoindent
\mref{it:7003} Since $P\in \RBO(L)$, we have
$P(x\vee y)=P(x)\vee P(y)$ and $P(x\ast_P y)=P(x)\wedge P(y)$ by \mref{it:7001}. So
$P$ is a homomorphism from
$(L,\vee,\ast_{P})$ to $(L,\vee,\wedge)$. 
\end{proof}

The notion of a dendriform algebra orginated from the work of Loday on algebraic $K$-theory~\mcite{Lo}. It is known that a Rota-Baxter operator (of weight $0$), that is, an integral operator, on an associative algebra gives rise to a dendriform algebra~\mcite{Ag}. As their lattice theoretic analogy, we define 
\begin{definition}
A quadruple $(A,+, \prec, \succ)$ is called a \name{dendriform semiring} if  $(A,+)$ is a semigroup, the binary operations $\prec$ and $ \succ$ are distribute over $+$, and $A$ satisfies the following equations. 
\begin{align}
	&\label{prec}(x\prec y)\prec z=x\prec(y\prec z+y\succ z),\\
	&\label{ps}(x\succ y)\prec z= x\succ (y \prec z),\\
	&\label{succ}x\succ (y\succ z)=(x\prec y+x\succ y)\succ z \quad \text{for all } x, y, z\in A.
\end{align}
\end{definition}

\begin{proposition}
Let $L$ be a distributive lattice, and \emph{$P\in \RBO(L)$}.
 Define
$$ x\prec_P y:=x\wedge P(y), \quad x\succ_P y:= P(x)\wedge y \tforall x, y \in L.
$$
Then $(L, \vee, \prec_P,\succ_P)$ is a dendriform semiring.
\mlabel{pro:888}
\end{proposition}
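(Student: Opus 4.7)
The plan is to verify the three dendriform axioms by expanding each side to a common meet of three elements and using only two tools: the distributivity of $L$ and the Rota-Baxter identity $P(x)\wedge P(y)=P(P(x)\wedge y)\vee P(x\wedge P(y))$. The semigroup axiom on $(L,\vee)$ is automatic, and the distributivity of $\prec_P$ and $\succ_P$ over $\vee$ is a one-line consequence of the distributivity of $L$ together with the fact that $P$ preserves $\vee$: for instance, $x\prec_P(y\vee z)=x\wedge P(y\vee z)=x\wedge(P(y)\vee P(z))=(x\prec_P y)\vee(x\prec_P z)$, and the other three checks are of the same shape.

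For axiom~\eqref{prec}, first I would rewrite
$(x\prec_P y)\prec_P z=x\wedge P(y)\wedge P(z)$,
then apply the Rota-Baxter identity to $P(y)\wedge P(z)$ to obtain $x\wedge\bigl(P(P(y)\wedge z)\vee P(y\wedge P(z))\bigr)$. Distributivity of $L$ then splits this join and, recognizing $P(y)\wedge z=y\succ_P z$ and $y\wedge P(z)=y\prec_P z$, yields exactly $(x\prec_P(y\succ_P z))\vee(x\prec_P(y\prec_P z))$, which is the right-hand side of~\eqref{prec}. Axiom~\eqref{succ} is symmetric: expand $x\succ_P(y\succ_P z)=P(x)\wedge P(y)\wedge z$, apply the Rota-Baxter identity to $P(x)\wedge P(y)$, and distribute the $\vee$ across $\wedge z$ to match the right-hand side.

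The middle axiom~\eqref{ps} is the easiest: both $(x\succ_P y)\prec_P z$ and $x\succ_P(y\prec_P z)$ simplify directly to $P(x)\wedge y\wedge P(z)$ by the associativity and commutativity of $\wedge$ alone, with no appeal to the Rota-Baxter identity.

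I do not anticipate a serious obstacle here; the only place where care is required is lining up the Rota-Baxter identity with the definitions of $\prec_P$ and $\succ_P$, making sure the two terms on the right of the identity correspond to the two summands appearing in the dendriform axioms. Once this bookkeeping is clear, both nontrivial axioms reduce to a single application of the Rota-Baxter identity followed by one application of the distributive law of $L$.
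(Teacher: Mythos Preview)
Your proposal is correct and follows essentially the same approach as the paper's own proof: both verify the distributivity of $\prec_P,\succ_P$ over $\vee$ using distributivity of $L$ and $P(x\vee y)=P(x)\vee P(y)$, reduce axiom~\eqref{ps} to the associativity of $\wedge$, and obtain axioms~\eqref{prec} and~\eqref{succ} from a single application of the Rota-Baxter identity to the pair $P(y)\wedge P(z)$ (resp.\ $P(x)\wedge P(y)$). The only cosmetic difference is that the paper keeps the join inside $P$ and uses $P\bigl((y\wedge P(z))\vee(P(y)\wedge z)\bigr)=P(y)\wedge P(z)$ directly, whereas you distribute with the outer variable first and then re-collect via the already-established distributivity of $\prec_P$ (resp.\ $\succ_P$) over $\vee$.
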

\begin{proof}
Assume that  $L$ is a distributive lattice, and $P\in \RBO(L)$.
 Let $x, y, z\in L$.

{\bf Claim $(i)$:} $\prec_P$  distributes over $\vee$.
In fact, we have 
$$x\prec_{p}(y\vee z)=x\wedge P(y\vee z)=x\wedge (P(y)\vee P(z))=(x\wedge P(y))\vee (x\wedge P(z))=(x\prec_{P} y)\vee (x\prec _{P}z),$$
$$(y\vee z)\prec_{p}x=(y\vee z)\wedge P(x)=(y\wedge P(x))\vee (z\wedge P(x))=(y\prec_{P} x)\vee (z\prec _{P}x).$$
Therefore $\prec_P$  distributes over $\vee$.

{\bf Claim $(ii)$:} $\succ_P$  distributes over $\vee$.
In fact, we have 
$$x\succ_{p}(y\vee z)=P(x)\wedge (y\vee z)=(P(x)\wedge y)\vee (P(x)\wedge z)=(x\succ_{P} y)\vee (x\succ _{P}z)$$
$$(y\vee z)\succ_{p}x=P(y\vee z)\wedge x=(P(y)\vee P(z))\wedge x=(P(y)\wedge x)\vee (P(z)\wedge x)=(y\succ_{P} x)\vee (z\succ_{P}x).$$
Therefore $\succ_P$  distributes over $\vee$.

{\bf Claim $(iii)$:} $(x\prec_{P} y)\prec_{P} z=x\prec_{P}\Big((y\prec_{P} z)\vee (y\succ_{P} z)\Big)$. In fact, we have
 $$x\prec_{P}\Big((y\prec_{P} z)\vee (y\succ_{P} z)\Big)
=x\wedge P((y\wedge P(z))\vee (P(y)\wedge z))=x\wedge (P(y)\wedge P(z))=(x\prec_{P} y)\prec_{P} z.$$

{\bf Claim $(iv)$:} $(x\succ_{P} y)\prec_{P} z= x\succ_{P} (y \prec_{P} z)$. Indeed, we have
$$(x\succ_{P} y)\prec_{P} z=(P(x)\wedge y)\wedge P(z)=P(x)\wedge (y\wedge P(z))= x\succ_{P} (y \prec_{P} z).$$

{\bf Claim $(v)$:} $x\succ_{P} (y\succ_{P} z)=((x\prec_{P} y)\vee(x\succ_{P} y))\succ_{P} z$. Indeed, we have
\begin{eqnarray*}
x\succ_{P} (y\succ_{P} z)&=&P(x)\wedge (P(y)\wedge z)=(P(x)\wedge P(y))\wedge z\\
&=&P((x\wedge P(y))\vee (P(x)\wedge y))\wedge z=((x\prec_{P} y)\vee(x\succ_{P} y))\succ_{P} z.
\end{eqnarray*}

Summarizing the above calculations, we obtain that $(L, \vee, \prec_P,\succ_P)$ is a dendriform semiring.
\end{proof}

\noindent
{\bf Acknowledgments.}
This work is supported by the NSFC Grants (Nos. 11801239 and 12171022). We thanks the anonymous referees for their helpful suggestions. 

\smallskip

\noindent
{\bf Data Availability Statement.}  Data sharing not applicable to this article as no datasets were generated or analysed during the current study.

\end{document}